\documentclass[12pt]{article}
\usepackage{}

\usepackage{authblk}
\usepackage{cite}
\usepackage{amsmath}
\usepackage{amsthm,amsfonts,amssymb}           
\usepackage{bbm}
\usepackage[pdfstartview=FitH,
            CJKbookmarks=true,
            bookmarksnumbered=true,
            bookmarksopen=true,
            colorlinks,
            linkcolor=blue,
            anchorcolor=blue,
            citecolor=blue,
            urlcolor=blue
            ]{hyperref}
\numberwithin{equation}{section}

\hyphenpenalty=5000
\tolerance=1000          
\hyphenation{Theo-rem}

\newtheorem*{thm1.2'}{Theorem 1.2$'$}

\newtheorem{thm}{Theorem}[section]
\newtheorem{cor}[thm]{Corollary}
\newtheorem{lem}[thm]{Lemma}

\theoremstyle{remark}

\newtheorem{ex}[thm]{Example}
\def\R{{\mathbb R}}
\def\Q{{\mathbb Q}}
\def\lin{\operatorname{lin}}
\def\bd{\operatorname{bd}}
\def\relint{\operatorname{relint}}
\def\intr{\operatorname{int}}
\newcommand{\GL}[1]{\mathrm{GL}({#1})}                  
\newcommand{\SL}[1]{\mathrm{SL}({#1})}                  
\newcommand{\GLn}{\mathrm{GL}(n)}                  
\newcommand{\SLn}{\mathrm{SL}(n)}                  
\newcommand{\Sn}{S^{n-1}}                  
\def\Rn{\mathbb{R}^n}
\newcommand{\kik}[1]{\mathbbm{1}_{#1}}                  
\def\MP{\mathcal{P}}
\def\MK{\mathcal{K}}
\def\MA{\mathcal{A}}
\def\MN{\mathcal{N}}
\def\MT{\mathcal{T}}
\def\MKoon{\MK_{(o)}^n}
\def\MPoon{\MP_{(o)}^n}
\def\MKon{\MK_{o}^n}
\def\MPon{\MP_{o}^n}
\def\MTon{\MT_o^n}
\def\e{\varepsilon}
\def\Conv{\operatorname{Conv}(\R^n)}
\def\supp{\operatorname{supp}}

\makeatletter
\newcommand{\subjclass}[2][1991]{%
  \let\@oldtitle\@title%
  \gdef\@title{\@oldtitle\footnotetext{#1 \emph{Mathematics subject classification.} #2}}%
}
\newcommand{\keywords}[1]{%
  \let\@@oldtitle\@title%
  \gdef\@title{\@@oldtitle\footnotetext{\emph{Key words and phrases.} #1.}}%
}
\makeatother

\title{\bf{Affine function valued valuations}}
\author{Jin Li}
\affil{Institut f\"{u}r Diskrete Mathematik und Geometrie, Technische Universit\"{a}t Wien, 1040 Wien, Austria\\\href{mailto: Jin Li<lijin2955@gmail.com>}{lijin2955@gmail.com}}
\date{}
\subjclass[2010]{52A20, 52A39, 52B45}
\keywords{Valuation, $\SLn$ contravariant, general projection function, Orlicz projection function, $L_p$ projection function, mixed volume}


%

\begin{document}

\maketitle

\begin{abstract}
A classification of $\SLn$ contravariant, continuous function valued valuations on convex bodies is established.
Such valuations are natural extensions of $\SLn$ contravariant $L_p$ Minkowski valuations, the classification of which characterized $L_p$ projection bodies, which are fundamental in the $L_p$ Brunn-Minkowski theory, for $p \geq 1$.
Hence our result will help to better understand extensions of the $L_p$ Brunn-Minkowski theory.
In fact, our results characterize general projection functions which extend $L_p$ projection functions ($p$-th powers of the support functions of $L_p$ projection bodies) to projection functions in the $L_p$ Brunn-Minkowski theory for $0< p < 1$ and in the Orlicz Brunn-Minkowski theory.
\end{abstract}

\section{Introduction}\label{s1}
Let $\MK^n$ be the set of \emph{convex bodies} (i.e., compact convex set) in Euclidean space $\R^n$.
A \emph{valuation} is a map $Z$ from $\MK^n$ to an abelian semigroup $\langle \MA, +\rangle$ such that
\begin{align}\label{val100}
Z K + Z L = Z (K \cup L) + Z(K \cap L)
\end{align}
whenever $K,L, K \cup L \in \MK^n$. A map defined on a subset of $\MK^n$ is also called a valuation if \eqref{val100} holds whenever $K,L,K \cup L,K \cap L$ are contained in the subset. A \emph{function valued valuation} is a valuation taking values in some function space where addition in \eqref{val100} is the ordinary addition of functions.

Since any convex body (star body) can be identified with its support function (radial function), valuations taking values in the space of convex bodies (star bodies) are often studied as valuations taking values in some function space; see \cite{Lud02b,Lud05,Hab12b,SW2015mink,abardia2011p,Lud03,SS06,Sch2010,SW12,Lud12,Tsa12,Wan11,Par14a,Par14b,CLM2017Min,BL2017Min,Hab09,HL06,Lud06}. Function valued valuations are also an important tool for establishing results on other valuations, for example, measured valued valuations \cite{HP14a}.
When Ludwig \cite{Lud02b,Lud05}, Schuster, Wannerer \cite{SW12}, Haberl \cite{Hab12b} and Parapatits \cite{Par14a} studied $\SLn$ contravariant $L_p$ Minkowski valuations, they also gave classifications of $\SLn$ contravariant valuations taking values in some special function space. Here an $L_p$ Minkowski valuation is a valuation taking values in $\MK^n$ where addition in \eqref{val100} is $L_p$ Minkowski addition.

Let $p \geq 0$. A function $f:\R^n \to \R$ is called \emph{homogeneous of degree $p$} if $f(\lambda x) = \lambda^p f(x)$ for any $\lambda >0$ and $x \in \R^n$.
Let $\mathcal{C}(\R^n)$ be the set of continuous function on $\R^n$ and $\mathcal{C}_p(\R^n)$ be the subset of $\mathcal{C}(\R^n)$ such that any $f \in \mathcal{C}_p(\R^n)$ is homogeneous of degree $p$.

A function valued valuation $Z : \MK^n \to \mathcal{C}(\R^n)$ is called \emph{$\SLn$ contravariant} if
$$Z (\phi K) (x) = Z K (\phi ^{-1} x)$$
for every $K \in \mathcal{K} ^n$ and $\phi \in \SLn$. Let $h_K$ be the support function of $K$. For $p \geq 1$, an $L_p$ Minkowski valuation $Z$ is $\SLn$ contravariant if the map $K \mapsto h_{ZK}$ is an $\SLn$ contravariant function valued valuations.

Let $p \geq 1$. Due to Haberl \cite{Hab12b} and Parapatits \cite{Par14a}, roughly speaking, the set of $\SLn$ contravariant $L_p$ Minkowski valuations is the cone of asymmetric $L_p$ projection bodies, which were introduced by Ludwig \cite{Lud05},
and $\mathcal{C}_p(\R^n)$ valued valuations are the linear hull of asymmetric $L_p$ projection functions ($p$-th powers of the support functions of asymmetric $L_p$ projection bodies).
In this sense, $\SLn$ contravariant $L_p$ Minkowski valuations and $\mathcal{C}_p(\R^n)$ valued valuations are ``basically" the same.
In the dual case, where $\SLn$ contravariance is replaced by $\SLn$ covariance,
there are also no further $\mathcal{C}_p(\mathbb{R}^n)$ valued valuations than the $p$-th powers of the support functions of the corresponding $L_p$ Minkowski valuations;
see Haberl \cite{Hab12b}, Parapatits \cite{Par14b}, Li and Leng \cite{LL2016LpMV}.
However, if we remove the homogeneity assumption, then Laplace transforms of convex bodies (that is, classical Laplace transforms of indicator functions of convex bodies) are additional $\SLn$ covariant $\mathcal{C}(\R^n)$ valued valuations; see Li and Ma \cite{LM2017Lap}.
Hence the natural question arises to give a unified classification of $\SLn$ contravariant and of $\SLn$ covariant $\mathcal{C}(\R^n)$ valued valuations.
We believe such a classification will help to better understand extensions of the $L_p$ Brunn-Minkowski theory.

In this paper, we give a classification of $\SLn$ contravariant $\mathcal{C}(\R^n)$ valued valuations for dimension $n \geq 3$. The cases $n=1$ and $n=2$ are rather different and will be treated separately. Hence we will always assume $n \geq 3$ throughout the paper.

The topology of $\MK^n$ is induced by the Hausdorff metric and the topology of $\mathcal{C}(\mathbb{R}^n)$ is the $C^{0}$ topology induced by uniform convergence on any compact subset. If we identify $\MK^n$ as the cone of support functions, then the topology of $\MK^n$ induced by the Hausdorff metric is the same as the $C^{0}$ topology of the cone of support functions. With this topology, we can define continuity and (Borel) measurability of maps from $\MK^n$ to $\mathcal{C}(\R^n)$.

Let $\mathcal{K}_o ^n$ be the set of convex bodies in $\R^n$ containing the origin.
\begin{thm}\label{thm:MKo}
Let $n \ge 3$. A map $Z : \mathcal{K}_o ^n \to \mathcal{C}(\mathbb{R}^n)$ is a continuous, $\SLn$ contravariant valuation
if and only if there are constants $c_0,c_{n-1} \in \R$ and a continuous function $\zeta : \R \to \R$ satisfying $\lim_{|t|\to \infty}\zeta(t)/t =0 $ such that
\begin{align*}
Z K (x) &= \int_{\Sn \setminus \{h_K=0\}} \zeta \left(\frac{x \cdot u}{h_{K}(u)}\right)d V_K(u) + c_{n-1} V_1(K,[-x,x])  + c_0V_0(K)
\end{align*}
for every $K \in \mathcal {K}_o^n$ and $x \in \R^n$. Moreover, $c_0,c_{n-1}$ and $\zeta$ are uniquely determined by $Z$.
\end{thm}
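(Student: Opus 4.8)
The plan is to follow the standard strategy for $\SLn$ (contra)variant valuation classifications: reduce the problem to convex polytopes, exploit the action of a suitable subgroup of $\SLn$ to pin down the behaviour on low-dimensional and special polytopes, and then assemble a global formula by a cone/induction argument. The "if" direction is routine: one checks that each of the three summands is a continuous valuation (the integral term because $V_K$ is a weakly continuous valuation on $\MKon$ and $\zeta$ is bounded by a linear function so the integrand stays controlled near $\{h_K = 0\}$; the $V_1(K,[-x,x])$ term because mixed volume is linear in $K$, hence a valuation; the constant $c_0V_0(K)$ trivially), and that each is $\SLn$ contravariant because $V_K$ and mixed volumes transform correctly under $\phi \in \SLn$ and $h_{\phi K}(u) = h_K(\phi^\top u)$ while $x \cdot u$ pairs with $\phi^{-1}x$ appropriately. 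Uniqueness of $c_0, c_{n-1}, \zeta$ will come out of the construction at the end, by evaluating $Z$ on explicitly chosen bodies (a point, a segment, and a simplex) and reading off the constants and then the function $\zeta$.

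The substance is the "only if" direction. First I would restrict $Z$ to polytopes $\MPon$ and use continuity plus density of polytopes to recover $Z$ on all of $\MKon$ at the end; by a theorem of the type already implicit in the cited work (Ludwig, Haberl, Parapatits, Schuster--Wannerer), a continuous $\SLn$ contravariant valuation on $\MKon$ is determined by its values on polytopes. Then, working with the simplicial dissection of polytopes, I would set $Z$ on the standard simplex $T$ and its dilates/faces and use the valuation property (inclusion--exclusion over a triangulation) together with $\SLn$ contravariance to derive a Cauchy-type functional equation. The key point is that for the standard simplex and its coordinate faces, the stabilizer in $\SLn$ is rich enough (it contains permutations of coordinates and diagonal $\SLn$ matrices) to force the value $ZT(x)$ to depend on $x$ only through the ratios $x\cdot e_i / h_T(e_i)$ in a symmetric way, and the valuation identity applied to cutting $T$ by a hyperplane through the origin yields an additivity relation whose continuous solutions are governed by a single function $\zeta$ with the stated growth condition $\zeta(t)/t \to 0$ (the growth bound is exactly what continuity of $Z$ at bodies with $h_K(u)\to 0$ forces). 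Separately, the lower-dimensional polytopes (segments and points) contribute the terms $c_{n-1}V_1(K,[-x,x])$ and $c_0V_0(K)$; these arise as the "boundary" contributions that the main integral term cannot see, and identifying their coefficients uses that $V_1(\cdot,[-x,x])$ is the unique (up to scalar) $\SLn$ contravariant valuation linear in $K$ of the right degree, and $V_0$ the unique constant one.

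I would then glue: given the formula on simplices, extend to all polytopes via triangulation and the valuation property (checking the cross-terms from shared faces cancel, which is where the precise normalization of $dV_K$ as the surface area measure matters), and finally to $\MKon$ by continuity. The main obstacle I anticipate is the rigidity step on the simplex — showing that $ZT(x)$ must have the claimed integral/pointwise form with no extra terms. Concretely, one must show that the only continuous solutions of the functional equation coming from the valuation property on a cone of simplices, invariant under the diagonal/permutation subgroup of $\SLn$, are sums of a "$\zeta$ of the ratio" term, a term linear in the edge data (the $V_1$ term), and a constant; ruling out, e.g., terms that are genuinely nonlinear in several of the ratios $x\cdot e_i/h_T(e_i)$ simultaneously. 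This is the classification of a two-variable-type Cauchy equation on the positive cone, and handling the degenerate limit (faces where $h_T$ vanishes, which is where $c_{n-1}$ and $c_0$ sneak in) carefully is the technical heart of the argument; everything else is bookkeeping with triangulations and continuity.
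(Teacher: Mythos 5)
Your proposal is correct in outline and follows essentially the same route as the paper: reduction to polytopes by density and continuity, reduction to simplices with a vertex at the origin via full additivity of polytope valuations, Cauchy functional equations obtained by cutting the standard simplex with hyperplanes through the origin (which yield both the lower-dimensional terms $c_{n-1}V_1(\cdot,[-x,x])$, $c_0V_0$ and the function $\zeta$ on the full-dimensional simplex), an induction to pass from $Z(sT^n)(te_n)$ to general $x$, and finally the growth condition $\zeta(t)/t\to 0$ extracted from continuity along a degenerating sequence of polytopes. The paper organizes the rigidity step exactly as you anticipate (Lemmas 5.1--5.4 and Theorem 1.2$'$), so no substantive divergence.
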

Here $V_1(K,[-x,x]) = h_{\Pi K}(x)$ is the classical projection function of $K$, where $\Pi K$ is the projection body of $K$, $V_0(K)$ is the Euler characteristic, $\{h_K=0\}$ denotes the set $\{u \in \Sn : h_K(u)=0 \}$ for $K \in \MK^n$ and $V_K$ is the \emph{cone-volume measure} of $K$. Using the surface area measure $S_K$, the cone-volume measure can be written as $dV_K = \frac{1}{n} h_K dS_K$.
If $K=\{o\}$, then $\int_{\Sn \setminus \{h_K=0\}} \zeta \left(\frac{x \cdot u}{h_{K}(u)}\right)d V_K(u) =0$.
See Section \ref{s2} for details.

Let $\mathcal{P}_o ^n$ be the set of polytopes in $\R^n$ containing the origin.
We can replace continuity by measurability when considering valuations on polytopes.
Here Borel sets in the space of polytopes are also induced by the Hausdorff metric.

\begin{thm}\label{thm:cont}
Let $n \ge 3$. A map $Z : \mathcal{P}_o ^n \to \mathcal{C}(\mathbb{R}^n)$ is a measurable, $\SLn$ contravariant valuation if and only if
there are constants $c_0,c_0',c_{n-1} \in \R$ and a continuous function $\zeta : \R \to \R$ such that
\begin{align}\label{val22}
Z P (x)&=\int_{\Sn \setminus \{h_P=0\}} \zeta \left(\frac{x \cdot u}{h_{P}(u)}\right)d V_P(u) + c_{n-1} V_1(P,[-x,x])  \notag \\
&\qquad + c_0V_0(P) + c_0' (-1)^{\dim P} \mathbbm{1}_{\relint P}(o)
\end{align}
for every $P \in \mathcal {P}_o^n$ and $x \in \R^n$. Moreover, $c_0,c_0',c_{n-1}$ and $\zeta$ are uniquely determined by $Z$.
\end{thm}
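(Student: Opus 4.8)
My plan is to deduce Theorem~\ref{thm:cont} from Theorem~\ref{thm:MKo} by a perturbation/localization argument, exactly in the spirit of the classical "continuity vs. measurability on polytopes" dichotomy (as in Ludwig--Reitzner and Parapatits). The sufficiency direction is the easy half: one checks that the extra term $c_0'(-1)^{\dim P}\mathbbm{1}_{\relint P}(o)$ is itself a measurable $\SLn$ contravariant valuation on $\MPon$. Contravariance is clear since $\phi\in\SLn$ preserves dimension, maps $\relint P$ to $\relint(\phi P)$, and fixes the origin; the valuation property reduces to the combinatorial identity $(-1)^{\dim(P\cup Q)}\mathbbm{1}_{\relint(P\cup Q)}(o) + (-1)^{\dim(P\cap Q)}\mathbbm{1}_{\relint(P\cap Q)}(o) = (-1)^{\dim P}\mathbbm{1}_{\relint P}(o) + (-1)^{\dim Q}\mathbbm{1}_{\relint Q}(o)$ when $P,Q,P\cup Q$ are all polytopes, which follows from the inclusion--exclusion structure of relative interiors of a convex union. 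Measurability (indeed, this term is highly discontinuous) is immediate. Since the other three terms are continuous valuations on $\MPon$ by Theorem~\ref{thm:MKo}, any $Z$ of the stated form is a measurable $\SLn$ contravariant valuation.

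For necessity, I would first isolate, for each fixed $x\in\R^n$, the behavior of $Z$ at the origin. The key is that on the subcone of polytopes $P$ with $o\in\relint P$ and on the subcone with $o\in\bd P$, plus on lower-dimensional polytopes, the "bad" term behaves predictably: the idea is to show that $Z$ agrees with a \emph{continuous} $\SLn$ contravariant valuation away from a controlled discontinuity concentrated at $o$. Concretely, I would analyze how $ZP(x)$ changes as one slides a facet of $P$ past the origin or collapses $P$ to lower dimension; measurability forces the "jump" to be additive under the valuation relations, and $\SLn$ contravariance forces it to be $\SLn$-invariant, so by the standard characterization of $\SLn$-invariant measurable valuations supported at $o$ one gets that the discontinuous part is exactly $c_0'(-1)^{\dim P}\mathbbm{1}_{\relint P}(o)$ for some constant $c_0'=c_0'(x)$; a continuity-in-$x$ argument (using that $Z$ takes values in $\mathcal{C}(\R^n)$, so $x\mapsto ZP(x)$ is continuous, while $\mathbbm{1}_{\relint P}(o)$ is independent of $x$) then forces $c_0'$ to be a genuine constant. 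Subtracting this term yields a \emph{continuous} $\SLn$ contravariant valuation $\tilde Z:\MPon\to\mathcal{C}(\R^n)$, which extends continuously to $\MKon$ (polytopes are dense and the extension of continuous valuations from polytopes to convex bodies via the Hausdorff metric is standard, using $\MKon$-monotone approximation), and then restricting $\tilde Z$ further to $\MKoon$ and applying Theorem~\ref{thm:MKo} identifies $\tilde Z$ as the integral-plus-$V_1$-plus-$V_0$ part. One last point: Theorem~\ref{thm:MKo} is stated on $\MKoon$ but one needs the representation on all of $\MKon$, i.e.\ one must also allow polytopes with $h_P$ vanishing on part of $\Sn$; the formula is written to accommodate this via the domain $\Sn\setminus\{h_P=0\}$ of the integral, and I would check that the extension from $\MKoon$ to $\MKon$ is forced by continuity (approximating $K$ with $o\in\bd K$ by bodies $K+\e B$).

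The main obstacle, as usual in these classification refinements, is the second step: showing rigorously that the only new phenomenon allowed by passing from continuity to measurability is the single term $c_0'(-1)^{\dim P}\mathbbm{1}_{\relint P}(o)$, with no other wild discontinuities. The delicate part is that $Z$ is function valued, so one cannot simply quote a scalar classification; one must carry the parameter $x$ through the argument and control the joint regularity in $(P,x)$. I expect to handle this by fixing $x$, running the scalar-type argument to produce $c_0'(x)$, and then using the ambient continuity on $\R^n$ together with $\SLn$ contravariance (which relates the values at different $x$ through $\phi^{-1}x$) to upgrade $c_0'(x)$ to a constant; the technical heart is verifying that the "defect" $ZP(x)-\tilde ZP(x)$ is supported on polytopes for which $o$ lies in the affine hull, is additive, and is $\SLn$-invariant, which then pins it down via the known list of such invariant valuations (a constant times $V_0$, absorbed into $c_0$, plus a constant times $(-1)^{\dim P}\mathbbm{1}_{\relint P}(o)$).
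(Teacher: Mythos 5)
Your strategy runs in the opposite logical direction from the paper's and, more importantly, founders on a concrete obstruction. The paper proves Theorem \ref{thm:cont} \emph{first}, by dissecting the standard simplices $sT^d$ with hyperplanes $H_\lambda$, reducing everything to Cauchy functional equations in the dilation parameter (this is exactly where measurability of $r \mapsto Z(rT^n)(rte_n)$ enters, to rule out pathological additive solutions), and then invoking full additivity of valuations on polytopes (Lemma \ref{lemuq}) to pass from simplices to all of $\MPon$; Theorem \ref{thm:MKo} is then \emph{deduced} from Theorem \ref{thm:cont} by density of polytopes. Your plan to go the other way --- peel off a discontinuity ``concentrated at $o$'', obtain a continuous valuation on $\MPon$, extend it to $\MKon$, and quote Theorem \ref{thm:MKo} --- cannot work, because the class classified by Theorem \ref{thm:cont} is strictly larger than ``Theorem \ref{thm:MKo} plus the $\mathbbm{1}_{\relint P}(o)$ term''. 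Indeed, for an arbitrary continuous $\zeta$ (say $\zeta(t)=t^2$), $Z_\zeta$ is a well-defined, measurable, $\SLn$ contravariant valuation on $\MPon$ by Theorem \ref{thm:val}(ii), but it is \emph{not} continuous there: its discontinuities sit at polytopes with $o \in \bd P$, as the proof of Theorem \ref{thm:MKo} shows (continuity forces $\lim_{|t|\to\infty}\zeta(t)/t=0$). So after subtracting any multiple of $(-1)^{\dim P}\mathbbm{1}_{\relint P}(o)$ you are in general still left with a discontinuous valuation, and there is no continuous extension to $\MKon$ to which Theorem \ref{thm:MKo} could be applied.

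Two further gaps. First, there is no ``standard characterization of $\SLn$-invariant measurable valuations supported at $o$'' that you can quote for fixed $x$: the map $P \mapsto ZP(x)$ is invariant only under the stabilizer of $x$ in $\SLn$, not under all of $\SLn$, so the Ludwig--Reitzner scalar classification does not apply directly. Second, even for a valuation that \emph{is} continuous on $\MPon$, continuous extension to $\MKon$ is not automatic (continuity on a dense subcone does not give uniform continuity or extendability; this is a genuine issue for valuations, not a formality). The mechanism your proposal is missing is the direct one: use $\SLn$ contravariance to reduce to the simplices $sT^d$ and to the ray $\{te_n\}$ (Lemma \ref{lem3.1}), derive Cauchy equations from the valuation property under the dissections of $T^d$ by $H_\lambda$, solve them using measurability in the dilation variable and continuity of $x\mapsto ZP(x)$, and conclude with Lemma \ref{lemuq}.
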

Here $\dim P$ is the dimension of the affine hull of $P$ and $\relint P$ is the relative (with respect to the affine hull of $P$) interior of $P$. The function $\mathbbm{1}_{L} (o)$ is the indicator function of the set $L \subset \R^n$ at the origin $o$, that is, if $o \in L$, then $\mathbbm{1}_{L} (o)=1$, otherwise $\mathbbm{1}_{L} (o)=0$.

If we further assume that $Z K \in \mathcal{C}_p(\R^n)$ for $p \geq 1$ in Theorems \ref{thm:MKo} and \ref{thm:cont}, then we get classification results of Haberl \cite{Hab12b} and Parapatits \cite{Par14a}; see Corollary \ref{p>=1}.
The classification of the corresponding $L_p$ Minkowski valuations is a direct corollary by further assuming that $Z K$ is the $p$-th power of a support function.
Our results also give valuations associated with the $L_p$ Brunn-Minkowski theory for $0 <p < 1$ and the Orlicz theory; namely, the function $Z_\zeta K (x) := \int_{\Sn \setminus \{h_K=0\}} \zeta \left(\frac{x \cdot u}{h_{K}(u)}\right)d V_K(u)$ is (an extension of) the $L_p$ and Orlicz projection functions
depending on the choice of the function $\zeta: \R \to \R$; see Corollaries \ref{p<1} and \ref{co:Orl}.
We leave the details to Section \ref{s2}.

\emph{Real valued valuations} are valuations taking values in $\R$ with scalar addition and $Z: \MK^n \to \R$ is $\SLn$ invariant if $Z(\phi K) = Z K$ for any $\phi \in \SLn$ and $K \in \MK_o^n$. Theorem \ref{thm:MKo} (Theorem \ref{thm:cont}) also imply the classification of $\SLn$ invariant, continuous (measurable) real valued valuations which were obtained before by Blaschke \cite{Blas1937vor} and Ludwig and Reitzner \cite{LR2017sl}.
This follows from the fact that any $\SLn$ invariant real valued valuation can be understood as an $\SLn$ contravariant function valued valuation taking values in constant functions.
More precisely, if $\zeta \equiv c$, then $\int_{\Sn \setminus \{h_K=0\}} \zeta \left(\frac{x \cdot u}{h_{K}(u)}\right)d V_K(u) = cV_n(K)$, where $V_n(K)$ is the $n$ dimensional volume of $K$.
A specific characterization of all $\SLn$ invariant real valued valuations is also established in Corollary \ref{p<1} for the case $p=0$.

Considering valuations themselves are homogeneous, a different characterization of $L_p$ projection functions and of all $\SLn$ invariant real valued valuations is also established in Corollary \ref{qhom}.

Since general ($L_p$ or Orlicz) projection functions of the convex body $K$ are a special case of the general ($L_p$ or Orlicz) first mixed volumes of a convex body and a segment, our result might be a first step towards characterizing general first mixed volumes in valuation theory.
In particular, Corollaries \ref{trans} and \ref{trans2} are related to the characterization of classical mixed volumes by Alesker and Schuster \cite{AS2017+}. We also give a characterization of $L_p$ first mixed volumes in Corollary \ref{co:mixv} for $p \geq 1$. Other special cases of classical mixed volumes are intrinsic volumes (mixed volume of a convex body and the unit ball). A celebrated characterization of intrinsic volumes is the Hadwiger theorem. It is the starting point of valuation theory; see also \cite{Kla95,Ale99,Ale01,BF2011herm,LR10,HP14b}.
For another characterization of classical mixed volumes (not in valuation theory), see Milman and Schneider \cite{MS11}. In the following, when we talk about mixed volumes, we will always refer to the first mixed volume.

A classification of $\SLn$ contravariant valuations on all convex bodies or on all polytopes that do not necessarily contain the origin can be established by Theorems \ref{thm:MKo} and \ref{thm:cont}.
Let $\MP^n$ be the set of polytopes in $\R^n$. For $K\in \MK^n$, let $[K,o]$ denote the convex hull of $K$ and the origin.
\begin{thm}\label{thm:MK}
Let $n \ge 3$. A map $Z : \MK^n \to \mathcal{C}(\mathbb{R}^n)$ is a continuous, $\SLn$ contravariant valuation if and only if
there are constants $c_0,c_{n-1},\widetilde{c}_{n-1} \in \R$ and continuous functions $\zeta, \widetilde{\zeta} : \R \to \R$ satisfying $\lim_{|t|\to \infty}\zeta(t)/t =0$ and $\lim_{|t|\to \infty}\widetilde{\zeta}(t)/t =0 $ such that
\begin{align*}
&Z K (x) \notag\\
&=\int_{\Sn \setminus \{h_K =0\}} \zeta \left(\frac{x \cdot u}{h_{K}(u)}\right)d V_K(u) + \int_{\Sn \setminus \{h_{[K,o]} =0\}} \widetilde{\zeta} \left(\frac{x \cdot u}{h_{[K,o]}(u)}\right)d V_{[K,o]}(u) \notag\\
&\qquad + c_{n-1}V_1(K,[-x,x]) +\widetilde{c}_{n-1} V_1([K,o],[-x,x])  + c_0V_0(K)
\end{align*}
for every $K \in \mathcal {K}_o^n$ and $x \in \R^n$. Moreover, $c_0,c_{n-1},\widetilde{c}_{n-1} $ and $\zeta, \widetilde{\zeta} $ are uniquely determined by $Z$.
\end{thm}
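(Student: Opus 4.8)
The plan is to reduce Theorem~\ref{thm:MK} to Theorem~\ref{thm:MKo} (and, on polytopes, to Theorem~\ref{thm:cont}) by ``peeling off'' the behaviour of $Z$ on convex bodies containing the origin. Two structural facts drive this. First, the facts collected in Section~\ref{s2}: the Euler characteristic $V_0$, the classical projection function $K\mapsto V_1(K,[-x,x])=h_{\Pi K}(x)$, and the (signed) cone-volume measure $K\mapsto V_K$ are valuations on all of $\MK^n$, the integrand $u\mapsto\zeta(\tfrac{x\cdot u}{h_K(u)})h_K(u)$ extends continuously by $0$ across $\{h_K=0\}$ precisely when $\zeta(t)/t\to0$, and with this the integrals in the statement depend continuously on $(x,K)$. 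Second, the map $K\mapsto[K,o]$ is continuous, commutes with $\SLn$, and commutes with the lattice operations whenever the union is convex, i.e.\ $[K\cup L,o]=[K,o]\cup[L,o]$ and $[K\cap L,o]=[K,o]\cap[L,o]$; both are checked directly from $[M,o]=\bigcup_{0\le t\le1}tM$ for convex $M$. Granting these, the ``if'' direction is a routine verification that each summand on the right-hand side is a continuous, $\SLn$ contravariant valuation; for $\SLn$ contravariance one uses $h_{\phi K}(u)=h_K(\phi^{\mathrm t}u)$ together with $\det\phi=1$ to transport the cone-volume integral, $[\phi K,o]=\phi[K,o]$, $h_{\Pi(\phi K)}(x)=h_{\Pi K}(\phi^{-1}x)$, and $V_0(\phi K)=V_0(K)$.

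For the ``only if'' direction, let $Z$ be a continuous, $\SLn$ contravariant valuation on $\MK^n$. First I would set $Z_1K:=Z[K,o]$; by the second fact above $Z_1$ is again a continuous, $\SLn$ contravariant valuation on $\MK^n$, and it agrees with $Z$ on $\MKon$. Applying Theorem~\ref{thm:MKo} to $Z_1|_{\MKon}$ produces $c_0,\widehat c_{n-1}\in\R$ and a continuous $\widehat\zeta$ with $\widehat\zeta(t)/t\to0$ such that, for all $K\in\MK^n$,
\[
Z_1K(x)=\int_{\Sn\setminus\{h_{[K,o]}=0\}}\widehat\zeta\Big(\tfrac{x\cdot u}{h_{[K,o]}(u)}\Big)\,dV_{[K,o]}(u)+\widehat c_{n-1}\,V_1([K,o],[-x,x])+c_0\,V_0(K)
\]
(using $V_0([K,o])=V_0(K)$). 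Then $Z_2:=Z-Z_1$ is a continuous, $\SLn$ contravariant valuation vanishing on $\MKon$, and it remains to show that every such $Z_2$ has the form
\[
Z_2K(x)=\int_{\{u\in\Sn:\,h_K(u)<0\}}\zeta\Big(\tfrac{x\cdot u}{h_K(u)}\Big)\,dV_K(u)+c_{n-1}\big(V_1(K,[-x,x])-V_1([K,o],[-x,x])\big)
\]
for a continuous $\zeta$ with $\zeta(t)/t\to0$ and a $c_{n-1}\in\R$. Since in every direction $u$ with $h_K(u)>0$ the bodies $K$ and $[K,o]$ have the same support set, $V_{[K,o]}$ vanishes off $\{h_K>0\}$ and coincides with $V_K$ there, so $\int_{\{h_K<0\}}\zeta(\tfrac{x\cdot u}{h_K})dV_K=\int_{\Sn\setminus\{h_K=0\}}\zeta(\tfrac{x\cdot u}{h_K})dV_K-\int_{\Sn\setminus\{h_{[K,o]}=0\}}\zeta(\tfrac{x\cdot u}{h_{[K,o]}})dV_{[K,o]}$; hence, adding $Z_1$ back and setting $\widetilde\zeta:=\widehat\zeta-\zeta$ and $\widetilde c_{n-1}:=\widehat c_{n-1}-c_{n-1}$ (so $\widetilde\zeta(t)/t\to0$), the sum $Z=Z_1+Z_2$ is exactly the asserted representation. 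Uniqueness follows by restricting to $\MKon$ — which, by the uniqueness part of Theorem~\ref{thm:MKo}, determines $c_0$, $\zeta+\widetilde\zeta$ and $c_{n-1}+\widetilde c_{n-1}$ — and then testing $Z_2=Z-Z_1$ on simplices missing $o$ to separate $\zeta$ from $\widetilde\zeta$ and $c_{n-1}$ from $\widetilde c_{n-1}$.

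The classification of the remainder $Z_2$ (valuations vanishing on $\MKon$) is the heart of the matter and the step I expect to be the main obstacle. As $Z_2$ is continuous and polytopes are dense in $\MK^n$, it suffices to treat $Z_2$ on $\MP^n$; there I would repeat the peeling step with Theorem~\ref{thm:cont} in place of Theorem~\ref{thm:MKo}, using that $(-1)^{\dim[P,o]}\mathbbm{1}_{\relint[P,o]}(o)=(-1)^{\dim P}\mathbbm{1}_{\relint P}(o)$ so that the $c_0'$-term of Theorem~\ref{thm:cont} reappears unchanged, and reduce to a measurable, $\SLn$ contravariant valuation on $\MP^n$ vanishing on $\MPon$. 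Such a valuation is concentrated on the ``part of $P$ facing away from $o$'': dissecting $[P,o]$ into $P$ and the pyramids with apex $o$ over the facets of $P$ visible from $o$, one sees that every piece of $[P,o]$ containing $o$ (in particular $[P,o]$ itself and all those pyramids) is annihilated by $Z_2$, which turns the valuation identities into relations expressing $Z_2P$ through $Z_2$ on lower-dimensional faces of $P$. Running the functional-equation analysis of the proof of Theorem~\ref{thm:cont} relative to the origin — exploiting that the subgroup of $\SLn$ stabilising a closed halfspace through $o$ acts, in suitable coordinates on the open halfspace, with translations, and dissecting simplices by hyperplanes through $o$ — one obtains a Cauchy-type equation whose continuous solutions are the function $\zeta$, the constant $c_{n-1}$ arising from the linear limiting case; continuity of $Z_2$ at degenerating polytopes then forces $\zeta(t)/t\to0$, and density extends the formula to $\MK^n$. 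The delicate points are the inclusion-exclusion bookkeeping of which faces of $[P,o]$ are pyramids with apex $o$ (hence vanish) and re-deriving the functional equation about the origin rather than at the level of full-dimensional bodies.
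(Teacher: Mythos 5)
Your first reduction is sound and in fact close to what the paper does: $Z_1K:=Z[K,o]$ is a continuous, $\SLn$ contravariant valuation (the identities $[K\cup L,o]=[K,o]\cup[L,o]$ and $[K\cap L,o]=[K,o]\cap[L,o]$ are exactly those used in Corollary \ref{co:MK}), Theorem \ref{thm:MKo} applies to $Z_1$, and your computation identifying $\int_{\Sn\setminus\{h_{[K,o]}=0\}}\zeta\,dV_{[K,o]}$ with $\int_{\{h_K>0\}}\zeta\,dV_K$ is correct, so the target form you write for the remainder $Z_2=Z-Z_1$ is the right one. The difficulty is that the classification of $Z_2$ --- a continuous, $\SLn$ contravariant valuation vanishing on $\MKon$ --- which you yourself flag as ``the heart of the matter'', is never actually carried out. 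What you offer for it (dissect $[P,o]$ into $P$ and pyramids with apex $o$, then ``run the functional-equation analysis relative to the origin'') is a plan, not an argument: you would need analogues of Lemmas \ref{lem3.1}--\ref{lem4.4} for polytopes \emph{not} containing the origin (which directions of $x$ can be normalized away, which low-dimensional pieces vanish, which dissections by hyperplanes through $o$ produce a Cauchy equation), and none of these are supplied. This is a genuine gap, not a routine verification.

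For comparison, the paper closes exactly this gap by two devices. First, Lemma \ref{lemuq2} (from Ludwig--Reitzner) reduces everything on $\MP^n$ to the values of $Z$ on $sT^d$ and on $s[e_1,\dots,e_d]$, so no new inclusion--exclusion bookkeeping over visible facets is needed. Second, instead of re-deriving functional equations ``about the origin'', the paper transports the problem back to the already-solved setting: it defines $\widetilde Z(\phi sT^d):=Z(\phi s[e_1,\dots,e_d])$ on the class $\MTon\setminus\{o\}$ of simplices with a vertex at $o$, observes that this is again a measurable $\SLn$ contravariant valuation there, and that the proof of Theorem 1.2$'$ applies verbatim to it; the constants and the functions $\xi,\widetilde\xi$ from the two applications of Theorem 1.2$'$ are then recombined (note the correction terms $\pm 2(a_{n-1}-b_{n-1})|t|$, which your sketch has no counterpart for). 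The passage from $\MP^n$ to $\MK^n$ and the extraction of the growth conditions $\zeta(t)/t\to0$, $\widetilde\zeta(t)/t\to0$ via degenerating boxes is then as you indicate. If you want to salvage your route, the cleanest fix is to adopt precisely this $\widetilde Z$ construction together with Lemma \ref{lemuq2}; as written, your proof of the ``only if'' direction is incomplete at its central step.
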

Here $dV_K = \frac{1}{n} h_K dS_K$ is a signed measure since $h_K$ might be negative.

\begin{thm}\label{thm:MP}
Let $n \ge 3$. A map $Z : \mathcal{P}^n \to \mathcal{C}(\mathbb{R}^n)$ is a measurable, $\SLn$ contravariant valuation if and only if
there are constants $c_0,c_0',\widetilde{c}_0,c_{n-1},\widetilde{c}_{n-1} \in \R$ and continuous functions $\zeta, \widetilde{\zeta} : \R \to \R$ such that
\begin{align}\label{val24}
&Z P (x)  \notag\\
&=\int_{\Sn \setminus \{h_P =0\}} \zeta \left(\frac{x \cdot u}{h_{P}(u)}\right)d V_P(u) + \int_{\Sn \setminus \{h_{[P,o]} =0\}} \widetilde{\zeta} \left(\frac{x \cdot u}{h_{[P,o]}(u)}\right)d V_{[P,o]}(u) \notag\\
&\qquad + c_{n-1}V_1(P,[-x,x]) +\widetilde{c}_{n-1} V_1([P,o],[-x,x])  \notag\\
&\qquad + c_0V_0(P)  + c_0' (-1)^{\dim P} \mathbbm{1}_{\relint P}(o) + \widetilde{c}_0 \mathbbm{1}_{P}(o)
\end{align}
for every $P \in \mathcal {P}_o^n$ and $x \in \R^n$. Moreover, $c_0,c_0',\widetilde{c}_0,c_{n-1},\widetilde{c}_{n-1}$ and $\zeta, \widetilde{\zeta} $ are uniquely determined by $Z$.
\end{thm}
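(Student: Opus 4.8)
The plan is to deduce the theorem from Theorem~\ref{thm:cont} by passing from polytopes that contain the origin to all polytopes. For the sufficiency one checks summand by summand that the right-hand side of \eqref{val24} is a measurable $\SLn$ contravariant valuation on $\mathcal P^n$. The two integral terms and the two projection terms are treated as in the proofs of Theorems~\ref{thm:cont} and \ref{thm:MK}, using in addition that the map $P\mapsto[P,o]=\bigcup_{0\le s\le1}sP$ from $\mathcal P^n$ into $\mathcal P_o^n$ satisfies $[P\cup L,o]=[P,o]\cup[L,o]$ and $[P\cap L,o]=[P,o]\cap[L,o]$ whenever $P,L,P\cup L\in\mathcal P^n$, so that composing a valuation on $\mathcal P_o^n$ with it gives a valuation on $\mathcal P^n$, and that it commutes with $\SLn$ since $\phi o=o$. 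The term $c_0V_0(P)$ is trivially a valuation, $(-1)^{\dim P}\mathbbm 1_{\relint P}(o)$ is one by Theorem~\ref{thm:cont}, and $\mathbbm 1_P(o)$ is one because $o\in P\cup L$ exactly when $o\in P$ or $o\in L$; all summands are $\SLn$ contravariant.

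For the necessity, let $Z\colon\mathcal P^n\to\mathcal C(\R^n)$ be a measurable $\SLn$ contravariant valuation. Its restriction to $\mathcal P_o^n$ is again such a valuation, so Theorem~\ref{thm:cont} yields a continuous $\zeta^\ast\colon\R\to\R$ and constants $c_0^\ast,c_0',c_{n-1}^\ast$ so that \eqref{val22}, with this data, holds for all $P\in\mathcal P_o^n$. Let $Y$ be the map obtained by evaluating the right-hand side of \eqref{val22}, with the data $\zeta^\ast,c_0^\ast,c_0',c_{n-1}^\ast$, at $[P,o]$ in place of $P$. By the sufficiency part and the properties of $[\,\cdot\,,o]$, $Y$ is a measurable $\SLn$ contravariant valuation on $\mathcal P^n$, and $Y=Z$ on $\mathcal P_o^n$ since there $[P,o]=P$; here one uses that $V_0([P,o])=1$, that $\mathbbm 1_{\relint[P,o]}(o)=\mathbbm 1_{\relint P}(o)$, and that $\dim[P,o]\neq\dim P$ can occur only when $o\notin\mathrm{aff}\,P$, in which case $\mathbbm 1_{\relint P}(o)=0$. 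Hence $W:=Z-Y$ is a measurable $\SLn$ contravariant valuation on $\mathcal P^n$ with $W|_{\mathcal P_o^n}=0$, and writing
\[
\Phi(Q)(x):=\int_{\Sn\setminus\{h_Q=0\}}\zeta\Bigl(\frac{x\cdot u}{h_Q(u)}\Bigr)\,dV_Q(u)+c_{n-1}V_1(Q,[-x,x]),
\]
a short computation shows that \eqref{val24} holds for $Z$ if and only if $WP(x)=\Phi(P)(x)-\Phi([P,o])(x)+\widetilde c_0\bigl(\mathbbm 1_P(o)-1\bigr)$ for some continuous $\zeta$ and constants $c_{n-1},\widetilde c_0$ (one then recovers \eqref{val24} with $\widetilde\zeta=\zeta^\ast-\zeta$, $\widetilde c_{n-1}=c_{n-1}^\ast-c_{n-1}$, $c_0=c_0^\ast-\widetilde c_0$, and $c_0'$ unchanged). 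So everything reduces to classifying the measurable $\SLn$ contravariant valuations $W$ on $\mathcal P^n$ with $W|_{\mathcal P_o^n}=0$.

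I would carry this out by induction on dimension. Being measurable, $W$ extends to a valuation on the convex-polytope algebra, so that the inclusion--exclusion identity $W(\bigcup A_i)=\sum WA_i-\sum W(A_i\cap A_j)+\cdots$ holds for finite unions of convex polytopes; in particular $W$ is determined by its values on simplices, and $WS=0$ whenever $o\in S$. If $S$ is a $d$-simplex with $o\notin S$ but $o\in\mathrm{aff}\,S$, then inside $\mathrm{aff}\,S$ one has $[S,o]=S\cup C$, where $C$ is the union of the cones from $o$ over the facets of $S$ visible from $o$ and $D:=S\cap C$ is the union of those visible facets; every cell of $C$ contains $o$, so $WC=0$ and therefore $WS=WD$, and expanding $WD$ by inclusion--exclusion over the visible facets expresses $WS$ through $W$ on polytopes of dimension $\le d-1$ not containing $o$, which closes the step. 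If $o\notin\mathrm{aff}\,S$, the vertices of $S$ are linearly independent, so $\SLn$ contravariance reduces $WS$ to a function of only few real parameters (a single normalized determinant when $d\le n-2$, a constant when $d\le n-3$); dissecting $S$ and invoking the valuation property produces Cauchy-type functional equations in these parameters that measurability forces into the required shape. The case $d=0$ gives that $W\{v\}$ is one fixed constant for all $v\neq o$ — the origin of $\widetilde c_0$ — while the cases $d\in\{n-2,n-1\}$ with $o\notin\mathrm{aff}\,S$ are where the surface-area integral and the constant $c_{n-1}$ enter and are handled as in the proof of Theorem~\ref{thm:cont}. Reassembling the contributions over all faces, the telescoping collapses to $WP=\Phi(P)-\Phi([P,o])+\widetilde c_0(\mathbbm 1_P(o)-1)$.

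Finally, uniqueness of $c_0,c_0',\widetilde c_0,c_{n-1},\widetilde c_{n-1},\zeta,\widetilde\zeta$ is read off by testing \eqref{val24} on explicit families: dilates of a simplex with $o$ in its interior give $\zeta^\ast=\zeta+\widetilde\zeta$ and $c_{n-1}^\ast=c_{n-1}+\widetilde c_{n-1}$, while simplices with $o$ on a facet and simplices with $o$ outside (with $o$ in, and not in, the affine hull) separate $\zeta$ from $\widetilde\zeta$, $c_{n-1}$ from $\widetilde c_{n-1}$, and detect $c_0'$ and $\widetilde c_0$ from the jumps of the two indicator terms. I expect the main difficulty to lie in this core step for simplices not containing the origin — above all the low-codimension cases $d\in\{n-2,n-1\}$ with $o\notin\mathrm{aff}\,S$, which carry the bulk of the functional-equation analysis — and in verifying that the inclusion--exclusion over visible faces, fed by the lower-dimensional solutions, reassembles into exactly the two $[\,\cdot\,,o]$-terms plus the single term $\widetilde c_0\,\mathbbm 1_P(o)$, with no spurious lower-dimensional valuations left over.
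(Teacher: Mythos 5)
Your reduction to Theorem \ref{thm:cont} on $\MPon$ and then to the valuation $W=Z-Y$ vanishing on $\MPon$ is correct (and the identities $V_0([P,o])=1$ and $(-1)^{\dim[P,o]}\mathbbm 1_{\relint[P,o]}(o)=(-1)^{\dim P}\mathbbm 1_{\relint P}(o)$ that you need for $Y=Z$ on $\MPon$ do hold), but from that point on you take a genuinely different and much heavier route than the paper. The paper does not induct over all polytopes: it quotes Lemma \ref{lemuq2} from \cite{LR2017sl}, by which an $\SLn$ contravariant valuation on $\MP^n$ is determined by its values on the two families $sT^d$ and $s[e_1,\dots,e_d]$, and then handles the second family in one stroke by observing that $\widetilde Z(\phi sT^d):=Z(\phi s[e_1,\dots,e_d])$ is a well-defined, measurable, $\SLn$ contravariant valuation on $\MTon\setminus\{o\}$, so the entire proof of Theorem \ref{thm:cont} (which only uses dissections of simplices with a vertex at $o$ by hyperplanes through $o$) applies verbatim and yields the representation on $s[e_1,\dots,e_d]$ with new data $\widetilde\xi,b_0,b_{n-1}$; a linear change of constants and functions then gives \eqref{val24}. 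Your inclusion--exclusion over visible facets for simplices with $o\in\operatorname{aff}S\setminus S$ is a correct substitute for part of what Lemma \ref{lemuq2} encapsulates.

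The genuine gap is the case $o\notin\operatorname{aff}S$, which you correctly identify as carrying the whole functional-equation analysis but then dispatch with ``handled as in the proof of Theorem \ref{thm:cont}''. That proof is written for simplices with a vertex at the origin; to apply it to $s[e_1,\dots,e_n]$ (and to $s[e_1,\dots,e_d]$ for $d\le n-1$, where the dimension count shifts: the $(d-1)$-dimensional simplex $[e_1,\dots,e_d]$ has $d$-dimensional linear hull and behaves like $T^d$, not $T^{d-1}$, in the analogue of Lemma \ref{lem3.1}) you must either rederive the Cauchy equations of Lemmas \ref{lem3.2}--\ref{lem4.4} from the $H_\lambda$-dissections of these off-origin simplices, or make the transfer $S\mapsto[S,o]$ explicit as the paper does. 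As written, the decisive device is assumed rather than supplied, and likewise the final ``telescoping'' from simplices back to arbitrary polytopes --- which is exactly the content of Lemma \ref{lemuq2} --- is asserted, not proved. The architecture would work, but these two steps are the proof.
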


All the theorems will be proved in Section \ref{sec:proofthm} and all the corollaries will be proved in Section \ref{sec:proofco}.

\section{$L_p$ and Orlicz projection functions and mixed volumes}\label{s2}
We refer to Schneider \cite{Sch14} as a general reference for convex geometry.

The \emph{support function} of a convex body $K$ is $h_K(x) = \max_{y \in K} \{x \cdot y \}$, $x \in \R^n$. It is easy to see that support functions are convex functions and homogeneous of degree $1$. Moreover, support functions are important tools in convex geometry because of the following fact: given a convex function $h: \R^n \to \R$ which is homogeneous of degree $1$, there exists a unique convex body such that $h=h_K$. Briefly, a convex body is identified with its support function.

The \emph{Hausdorff distance} of $K,L$ is $\max_{u \in \Sn} |h_K(u) -h_L(u)|$. Hence $K_i \to K$ with respect to the Hausdorff metric if and only if $h_{K_i} \to h_{K}$ uniformly on $\Sn$.

First, let $p \geq 1$. The \emph{$L_p$ Minkowski sum} of $K,L \in \MK_o^n$ introduced by Firey (generalizing the classical Minkowski sum) is defined by its support function
\begin{align}\label{padd}
h_{K+_p L} = (h_K^p+h_L^p)^{1/p}.
\end{align}
Let $\MK_{(o)}^n$ be the set of convex bodies in $\R^n$ containing the origin in their interiors. The \emph{$L_p$ mixed volume} of $K \in \MKoon$ and $L \in \MKon$ is
\begin{align}\label{pmix1}
V_p(K,L) := \lim_{\e \to 0^+} \frac{p}{n}\frac{V_n(K+_{p,\e} L)-V_n(K)}{\e} = \frac{1}{n}\int_{\Sn} h_L^p(u) h_{K}^{1-p}(u)d S_K(u),
\end{align}
where $K+_{p,\e} L$ is the $L_p$ combination of $K,L$ such that $h_{K+_{p, \e} L}^p = h_K^p+ \e h_L^p$ and $S_K$ is the \emph{surface area measure} of $K$, that is, the pushforward of the $(n-1)$-dimensional Lebesgue measure with respect to the Gauss map.

One important property of $L_p$ mixed volumes is that they satisfy the $L_p$ Minkowski inequality: for $K \in \MKoon$ and $L \in \MKon$
\begin{align}\label{pMi}
\left(\frac{V_p(K,L)}{V_n(K)}\right)^{\frac{1}{p}} \geq \left(\frac{V_n(L)}{V_n(K)}\right)^{\frac{1}{n}}.
\end{align}
The $L_p$ Minkowski inequality is equivalent to the $L_p$ Brunn-Minkowski inequality.
The classical Minkowski inequality for $p=1$ is due to Minkowski himself.
For $p >1$, the $L_p$ Minkowski inequality was first established by Lutwak \cite{Lut93} and is the starting point of the systematic study of the $L_p$ Brunn-Minkowski theory.
If $p=1$ and $L$ is the unit ball, then the $L_p$ Minkowski inequality implies the classical isoperimetric inequality.
Moreover, the $L_p$ Minkowski inequality and its equality conditions are critical to many problems, for example, $L_p$ Minkowski problems \cite{Lut93}.

The $L_p$ mixed volume of $K \in \MKoon$ and a segment $[-x,x]$ is
\begin{align}\label{pmix2}
V_p(K,[-x,x]) = \frac{1}{n}\int_{\Sn} |x \cdot u|^p h_{K}^{1-p}(u)d S_K(u).
\end{align}
When $p=1$ and $x \in \Sn$, the right side of \eqref{pmix2} is (up to a constant) the $(n-1)$-dimensional volume of $K|x^\bot$, where $K|x^\bot$ is the orthogonal projection of $K$ onto the hyperplane $x^\bot = \{y \in \Rn: y \cdot x =0\}$. The function $Z_p K (x) := V_p(K,[-x,x])$ is called the \emph{$L_p$ projection function} of $K$.
The classical projection function for $p=1$ was introduced by Minkowski and $L_p$ versions were introduced by Lutwak, Yang and Zhang \cite{LYZ00a}. There is an important affine inequality associated with $L_p$ projection functions, namely, the $L_p$ Petty projection inequality \cite{Pet71,LYZ00a}: for $K \in \MKoon$
\begin{align*}
V_n(K)^\frac{n-p}{p} \int_{\Sn} V_p(K,[-x,x])^{-\frac{n}{p}}dx \leq V_n(E)^\frac{n-p}{p} \int_{\Sn} V_p(E,[-x,x])^{-\frac{n}{p}}dx,
\end{align*}
where $E$ is an ellipsoid.
By the Jensen inequality, the classical Petty projection inequality is also stronger that the classical isoperimetric inequality. Unfortunately, there is (so far) no clear relationship between the $L_p$ Minkowski inequality and the $L_p$ Petty projection inequality. Haberl and Schuster \cite{HS09a} established $L_p$ Petty projection inequalities for the asymmetric $L_p$ projection functions, i.e., linear combinations of $V_p(K,[o,x])$ and $V_p(K,[o,-x])$. The functional version of the $L_p$ Petty projection inequality is the affine $L_p$ Sobolev inequality; see \cite{Zha99Sob,LYZ02a,HS09b,Wang13PS}. The reverse classical Petty projection inequality is the Zhang projection inequality \cite{Zhang91pro}.

In the classical case $p=1$, the mixed volume and the projection function can be defined for any convex body. All the above still holds. We still write
$$V_1(K,[-x,x]):= \frac{1}{n}\int_{\Sn} |x \cdot u| d S_K(u)$$
for $K \in \MK^n$. The asymmetric case is the same since $V_1(K,[-x,x]) =2 V_1(K,[o,\pm x])$. But there are other extensions of $L_p$ projection functions onto $\MPon$, namely
\begin{align*}
\widehat{V}_p(P,[-x,x]):= \frac{1}{n}\int_{\Sn \setminus \{h_P=0\}} |x \cdot u|^p h_P^{1-p}(u)d S_P(u).
\end{align*}
Also, the asymmetric $L_p$ projection functions are defined as
\begin{align*}
\widehat{V}_p(P,[o,\pm x]) :&= \frac{1}{n}\int_{\Sn \setminus \{h_P=0\}} (x \cdot u)_\pm^p h_P^{1-p}(u)d S_P(u)\\
&= \int_{\Sn \setminus \{h_P=0\}} \left(\frac{x \cdot u}{h_{P}(u)}\right)_\pm^p d V_P(u)
\end{align*}
where $(\cdot)_\pm = \max \{\pm (\cdot) ,0\}$. They are both function valued valuations. \text{Clearly} $\widehat{V}_p(P,[-x,x]) = \widehat{V}_p(P,[o,x]) + \widehat{V}_p(P,[o,- x])$.
Moreover, $V_1(K,[-x,x]) \\= h_{\Pi K} (x)$ and $\widehat{V}_p(P,[o,\pm x]) = h_{\widehat{\Pi}_p^\pm P}^p(x)$.
Here $\Pi$ is the classical projection body and $\widehat{\Pi}_p^\pm $ are the asymmetric $L_p$ projection bodies.
If we assume that $Z P \in \mathcal{C}_p(\R^n)$ for $p \geq 1$ in Theorem \ref{thm:cont}, then we obtain those function valued valuations which were already characterized before by Haberl \cite{Hab12b} and Parapatits \cite{Par14a}.
\begin{cor}[Haberl \cite{Hab12b} and Parapatits \cite{Par14a}]\label{p>=1}
A map $Z : \mathcal{P}_o ^n \to \mathcal{C}_1(\mathbb{R}^n)$ is a measurable, $\SLn$ contravariant valuation if and only if there exist constants
$c_{n-1},\hat{c}_{n-1}^+,\hat{c}_{n-1}^- \in \mathbb{R}$ such that
\begin{align*}
Z P (x) &=c_{n-1} V_1(P,[-x,x]) + \hat{c}_{n-1}^+ \widehat{V}_1(P,[o,x]) + \hat{c}_{n-1}^- \widehat{V}_1(P,[o, -x])
\end{align*}
for every $P \in \mathcal{P}_o ^n$ and $x \in \R^n$.

For $1 < p < \infty$, a map $Z : \mathcal{P}_o ^n \to \mathcal{C}_p(\mathbb{R}^n)$ is a measurable, $\SLn$ contravariant valuation if and only if there exist constants
$\hat{c}_{n-p}^+,\hat{c}_{n-p}^- \in \R$ such that
\begin{align*}
Z P (x) &=\hat{c}_{n-p}^+ \widehat{V}_p(P,[o,x]) + \hat{c}_{n-p}^- \widehat{V}_p(P,[o, -x])
\end{align*}
for every $P \in \mathcal{P}_o ^n$ and $x \in \R^n$.
\end{cor}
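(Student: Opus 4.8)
The plan is to obtain Corollary~\ref{p>=1} directly from Theorem~\ref{thm:cont} by imposing the single extra condition that $ZP$ be homogeneous of degree $p$ in the variable $x$. The ``if'' direction requires nothing new: as recorded just above the statement, $\widehat V_p(\,\cdot\,,[o,\pm x])$ and $V_1(\,\cdot\,,[-x,x])$ are measurable, $\SLn$ contravariant, $\mathcal{C}_p(\R^n)$ valued valuations, hence so is any linear combination. For the converse, since $\mathcal{C}_p(\R^n)\subset\mathcal{C}(\R^n)$, Theorem~\ref{thm:cont} gives the representation \eqref{val22} with uniquely determined $c_0,c_0',c_{n-1}$ and continuous $\zeta$, and it remains to decide which of these data are compatible with homogeneity. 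Since $p\ge 1$, homogeneity forces $ZP(o)=0$ for every $P$; evaluating \eqref{val22} at $x=o$, and using $V_1(P,\{o\})=0$ together with $\int_{\Sn\setminus\{h_P=0\}}\zeta(0)\,dV_P=\zeta(0)V_n(P)$ (because $dV_P=\tfrac1n h_P\,dS_P$), yields $\zeta(0)V_n(P)+c_0V_0(P)+c_0'(-1)^{\dim P}\mathbbm{1}_{\relint P}(o)=0$ for all $P\in\mathcal{P}_o^n$; testing this on a full-dimensional body with $o$ in its interior, on the segment $[o,e_1]$, and on $\{o\}$ gives $\zeta(0)=c_0=c_0'=0$.

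The core of the argument is to identify $\zeta$. Put $\psi:=\zeta+c_{n-1}|\,\cdot\,|$ and feed into the now shortened formula \eqref{val22} a coordinate box $P=\prod_{j=1}^n[a_j,b_j]$ with $a_j<0<b_j$ and the point $x=te_1$. Only the facets with outer normals $\pm e_1$ contribute, and one finds $ZP(te_1)=\tfrac{A_1}{n}\bigl(b_1\psi(t/b_1)+(-a_1)\psi(t/a_1)\bigr)$ with $A_1=\prod_{k\ne1}(b_k-a_k)$. Homogeneity of degree $p$ then reads $b_1\psi(\lambda t/b_1)+(-a_1)\psi(\lambda t/a_1)=\lambda^p\bigl(b_1\psi(t/b_1)+(-a_1)\psi(t/a_1)\bigr)$ for all $b_1>0$, $a_1<0$, $t\in\R$, $\lambda>0$. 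For $t>0$, say, the two summands depend (for fixed $\lambda$) on the independent variables $t/b_1>0$ and $t/a_1<0$, so separating variables produces a continuous function $c$ with $\psi(\lambda w)=\lambda^p\psi(w)+w\,c(\lambda)$ for all $w\ne0$; associativity in $\lambda$ forces the cocycle identity $c(\lambda\mu)=\lambda^pc(\mu)+\mu c(\lambda)$, whose continuous solutions are $c(\lambda)=C(\lambda^p-\lambda)$ for $p\ne1$ and $c(\lambda)=C\lambda\ln\lambda$ for $p=1$. Hence $\psi(t)=\alpha(t)_+^p+\beta(t)_-^p-Ct$ when $p>1$, and $\psi(t)=\alpha(t)_++\beta(t)_-+Ct\ln|t|$ when $p=1$.

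To finish, evaluate \eqref{val22} on the ``half-box'' $P=[0,1]\times[-1,1]^{n-1}$, whose facet in $\{x_1=0\}$ passes through $o$, so that $\{h_P=0\}=\{-e_1\}$; with $x=te_1$ one gets $ZP(te_1)=\tfrac{2^{n-1}}{n}\bigl(\psi(t)+c_{n-1}|t|\bigr)$. Homogeneity of degree $p$ now kills whatever in this expression is not homogeneous of degree $p$: the leftover term (homogeneous of degree $1$ when $p>1$, a multiple of $t\ln|t|$ when $p=1$) must vanish, which forces $c_{n-1}=C=0$ for $p>1$ and $C=0$ for $p=1$. Therefore $\zeta(t)=\alpha(t)_+^p+\beta(t)_-^p$ (with $c_{n-1}=0$) when $p>1$, and $\zeta(t)=(\alpha-c_{n-1})(t)_++(\beta-c_{n-1})(t)_-$ when $p=1$. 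Substituting back into \eqref{val22} (with $c_0=c_0'=0$) and using $\int_{\Sn\setminus\{h_P=0\}}\bigl(\tfrac{x\cdot u}{h_P(u)}\bigr)_\pm^p dV_P=\widehat V_p(P,[o,\pm x])$ gives precisely the two asserted formulas, with $c_{n-1}^\pm$ read off from $\alpha,\beta,c_{n-1}$.

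The step I expect to be the real obstacle is the last one. Polytopes with the origin in their interior cannot detect the spurious linear term $-Ct$ or the logarithmic term $Ct\ln|t|$, because the identity $\sum_F\mathcal H^{n-1}(F)\,u_F=o$ makes these contributions telescope away; so one is genuinely forced to use a polytope with the origin on its boundary, where the integration domain $\Sn\setminus\{h_P=0\}$ is a proper subset of $\Sn$, in order to conclude that these terms — and, for $p>1$, also the coefficient $c_{n-1}$ — must vanish. Everything else is an application of Theorem~\ref{thm:cont} and routine bookkeeping.
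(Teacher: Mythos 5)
Your proposal is correct, and its overall skeleton (specialize Theorem~\ref{thm:cont}, then exploit homogeneity in $x$ to pin down $c_0,c_0',c_{n-1}$ and $\zeta$) is the same as the paper's; but the execution of the key step --- identifying $\zeta$ --- is genuinely different. The paper first separates the terms of the representation by degree of homogeneity in $\lambda$ (in effect invoking the uniqueness clause of Theorem~\ref{thm:cont} applied to the valuation $P\mapsto ZP(\lambda\,\cdot\,)=\lambda^pZP$), and then makes a single evaluation at $P=T^n$, $x=\pm e_n$: since the cone-volume measure of $T^n$ restricted to $\Sn\setminus\{h_{T^n}=0\}$ is one atom of mass $1/n!$ at the normal of the facet $[e_1,\dots,e_n]$, this yields $\zeta(\pm\lambda)=n!\,\lambda^p\,ZT^n(\pm e_n)$ outright, with no functional equation to solve. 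You instead test on coordinate boxes, where the relevant measure has two atoms in the $e_1$-direction; this coupling is what forces you through the separation of variables, the cocycle identity for $c(\lambda)$, and the extra half-box evaluation to kill the spurious terms $-Ct$ and $Ct\ln|t|$. Your route is longer but self-contained (it never uses the uniqueness statement of Theorem~\ref{thm:cont}), and your closing observation --- that origin-interior polytopes cannot see these spurious terms, so a polytope with $o$ on its boundary is indispensable --- is exactly right; the paper's choice of $T^n$, which has $o$ as a vertex, is the extreme instance of this, and is what collapses the whole computation to one line. All the individual steps you give (forcing $\zeta(0)=c_0=c_0'=0$ at $x=o$, the box identity $ZP(te_1)=\tfrac{A_1}{n}\bigl(b_1\psi(t/b_1)+(-a_1)\psi(t/a_1)\bigr)$, the solution of the cocycle equation, and the final bookkeeping for $p=1$ versus $p>1$) check out.
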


%

There are two different ways to extend the $L_p$ Brunn-Minkowski theory.
One is to $p<1$. For $0<p<1$, the right side of \eqref{padd} is in general not a support function.
Now the support function of $K+_p L$ is defined as the maximum support function smaller than $(h_K^p+h_L^p)^{1/p}$. Then, \eqref{pmix1} still holds and defines the $L_p$ mixed volume.
Also \eqref{pmix2} still gives $L_p$ projection functions for $0<p<1$. B\"{o}r\"{o}czky, Lutwak, Yang and Zhang \cite{BLYZ12} established the $L_p$ Minkowski inequality \eqref{pMi} for $0<p<1$ for planar origin-symmetric convex bodies and conjectured that it also holds for $n$ dimensional origin-symmetric convex bodies.
The $L_p$ Petty projection inequality for $0<p<1$ is unknown. For other aspects of the $L_p$ Brunn-Minkowski theory for $0 \leq p <1$, see \cite{BLYZ13,BLYZ14,CLM2017BM,CLZ2017lpMA,Zhu2014log}

We extend the $L_p$ projection functions for $0< p <1$ to $\MKon$ as follows
\begin{align*}
V_p(K,[o,\pm x]) :&= \frac{1}{n}\int_{\Sn} (x \cdot u)_\pm^p h_K^{1-p}(u)d S_K(u).
\end{align*}
Here we write $V_p$ instead of $\widehat{V}_p$ since $\int_{\Sn \setminus \{h_K=0\}} (x \cdot u)_\pm^p h_K^{1-p}(u)d S_K(u) = \int_{\Sn} (x \cdot u)_\pm^p h_K^{1-p}(u)d S_K(u)$ for $0<p<1$.
For valuations associated with the $L_p$ Brunn-Minkowski theory for $0 <p < 1$, we get the following by Theorem \ref{thm:cont}.

\begin{cor}\label{p<1}
For $0<p<1$, a map $Z : \mathcal{P}_o ^n \to \mathcal{C}_p(\mathbb{R}^n)$ is a measurable, $\SLn$ contravariant valuation if and only if
there are constants $\hat{c}_{n-p}^+,\hat{c}_{n-p}^- \in \R$ such that
\begin{align*}
Z P (x)= \hat{c}_{n-p}^+ V_p(P,[o,x]) + \hat{c}_{n-p}^- V_p(P,[o,-x])
\end{align*}
for every $P \in \mathcal {P}_o^n$ and $x \in \R^n$. A map $Z : \mathcal{P}_o ^n \to \mathcal{C}_0(\mathbb{R}^n)$ is a measurable, $\SLn$ contravariant valuation if and only if
there are constants $c_0,c_0',c_n \in \R$ such that
\begin{align*}
Z P (x)= c_nV_n(P) + c_0V_0(P) + c_0' (-1)^{\dim P} \mathbbm{1}_{\relint P}(o)
\end{align*}
for every $P \in \mathcal {P}_o^n$ and $x \in \R^n$.
\end{cor}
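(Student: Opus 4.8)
The plan is to derive Corollary~\ref{p<1} directly from Theorem~\ref{thm:cont} by feeding in the extra hypothesis $ZP\in\mathcal C_p(\R^n)$. Sufficiency is the easy half and I would dispose of it first: for $0<p<1$ the maps $P\mapsto\big(x\mapsto V_p(P,[o,\pm x])\big)$ are the maps $Z_\zeta$ of Theorem~\ref{thm:cont} for the continuous choice $\zeta(t)=t_\pm^p$, because $dV_P=\tfrac1n h_P\,dS_P$ turns $\int_{\Sn\setminus\{h_P=0\}}\big((x\cdot u)/h_P(u)\big)_\pm^p dV_P(u)$ into $\tfrac1n\int_{\Sn\setminus\{h_P=0\}}(x\cdot u)_\pm^p h_P^{1-p}(u)\,dS_P(u)$, and the latter equals $V_p(P,[o,\pm x])$ since the integrand vanishes on $\{h_P=0\}$ when $0<p<1$; these are visibly homogeneous of degree $p$ in $x$. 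For $p=0$ the three summands $V_n$, $V_0$ and $(-1)^{\dim P}\mathbbm 1_{\relint P}(o)$ already occur in the representation of Theorem~\ref{thm:cont} (the first as $Z_\zeta$ with $\zeta\equiv1$, using that a constant $\zeta$ integrates against $dV_P$ to $V_n(P)$) and are constant in $x$, hence valued in $\mathcal C_0(\R^n)$.

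The substance is necessity. I would start from the representation $(c_0,c_0',c_{n-1},\zeta)$ of $Z$ furnished by Theorem~\ref{thm:cont} and, for each $\lambda>0$, introduce the rescaled map $Z_\lambda P(x):=ZP(\lambda x)$. Since precomposition by $x\mapsto\lambda x$ is a continuous, additive self-map of $\mathcal C(\R^n)$ which commutes with precomposition by $\phi^{-1}\in\SLn$, the map $Z_\lambda$ is again a measurable, $\SLn$ contravariant valuation on $\mathcal P_o^n$; substituting $\lambda x$ into the representation of $Z$ and using $V_1(P,[-\lambda x,\lambda x])=\lambda V_1(P,[-x,x])$ shows that $Z_\lambda$ is represented by the data $(c_0,c_0',\lambda c_{n-1},\zeta_\lambda)$, where $\zeta_\lambda(t):=\zeta(\lambda t)$ is again continuous. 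On the other hand, the homogeneity hypothesis is exactly $Z_\lambda=\lambda^p Z$, whose representation is $(\lambda^p c_0,\lambda^p c_0',\lambda^p c_{n-1},\lambda^p\zeta)$. Comparing these via the uniqueness clause of Theorem~\ref{thm:cont} gives, for all $\lambda>0$, the functional equations $\zeta(\lambda t)=\lambda^p\zeta(t)$ for $t\in\R$, $\lambda c_{n-1}=\lambda^p c_{n-1}$, $c_0=\lambda^p c_0$ and $c_0'=\lambda^p c_0'$.

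Solving these is elementary. If $0<p<1$, the choice $\lambda=2$ forces $c_{n-1}=c_0=c_0'=0$, and continuity of $\zeta$ together with $\zeta(\lambda t)=\lambda^p\zeta(t)$ forces $\zeta(t)=\zeta(1)\,t_+^p+\zeta(-1)\,t_-^p$ (with $\zeta(0)=0$ consistent); feeding this back through the identity of the first paragraph yields $ZP(x)=\zeta(1)\,V_p(P,[o,x])+\zeta(-1)\,V_p(P,[o,-x])$, i.e. the claimed form with $c_{n-1}^\pm=\zeta(\pm1)$. If $p=0$, the choice $\lambda=2$ again forces $c_{n-1}=0$, the conditions on $c_0,c_0'$ become vacuous, and continuity together with $\zeta(\lambda t)=\zeta(t)$ forces $\zeta$ to be a constant $c_n$, which integrates against $dV_P$ to $c_nV_n(P)$, giving the stated form. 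I do not expect a serious obstacle; the one point worth getting right is that the clean way to exploit homogeneity is through the auxiliary valuation $Z_\lambda$ and the \emph{uniqueness} part of Theorem~\ref{thm:cont}, rather than substituting ad hoc polytopes into the representation, which would entangle the integral term with the $V_1$, $V_0$ and indicator contributions and make the bookkeeping awkward.
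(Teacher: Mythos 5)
Your proposal is correct and follows essentially the same route as the paper: both halves rest on Theorem~\ref{thm:cont}, the homogeneity identity $ZP(\lambda x)=\lambda^pZP(x)$, and the resulting scaling relations for $c_0,c_0',c_{n-1}$ and $\zeta$. Your use of the auxiliary map $Z_\lambda$ together with the uniqueness clause is just a cleaner formalization of the paper's ``comparing coefficients'' step (the paper instead extracts $\zeta(\pm\lambda)$ by evaluating at $P=T^n$, $x=\pm e_n$), and both arrive at $\zeta(t)=c_{n-1}^+t_+^p+c_{n-1}^-t_-^p$ for $0<p<1$ and $\zeta\equiv c_n$ for $p=0$.
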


For $0<p<1$, Haberl and Parapatits \cite{HP14a} obtained the corresponding result for even valuations. We remark that continuous versions of Corollaries \ref{p>=1} and \ref{p<1} are easy to get and non-zero continuous valuations on $\MKon$ only exists for $0 \leq p \leq 1$.

\textbf{Continuous version of Corollary 2.2:} \emph{For $0<p<1$, a map $Z : \mathcal{K}_o ^n \to \mathcal{C}_p(\mathbb{R}^n)$ is a continuous, $\SLn$ contravariant valuation if and only if
there are constants $\hat{c}_{n-p}^+,\hat{c}_{n-p}^- \in \R$ such that
\begin{align*}
Z K (x)= \hat{c}_{n-p}^+ V_p(K,[o,x]) + \hat{c}_{n-p}^- V_p(K,[o,-x])
\end{align*}
for every $K \in \mathcal {K}_o^n$ and $x \in \R^n$. A map $Z : \mathcal{K}_o ^n \to \mathcal{C}_0(\mathbb{R}^n)$ is a continuous, $\SLn$ contravariant valuation if and only if
there are constants $c_0,c_n \in \R$ such that
\begin{align*}
Z K (x)= c_nV_n(K) + c_0V_0(K)
\end{align*}
for every $K \in \mathcal {K}_o^n$ and $x \in \R^n$.}

Another extension of the $L_p$ Brunn-Minkowski theory is the so called Orlicz Brunn-Minkowski theory. Let $\zeta : \R \to [0,\infty)$ be a convex function such that $\zeta(0)=0$. We define the \emph{Orlicz projection function} $Z_\zeta K$ by extending \eqref{pmix2} to
\begin{align}\label{Opf1}
Z_\zeta K(x) = \int_{\Sn} \zeta \left(\frac{x \cdot u}{h_{K}(u)}\right)d V_K(u), ~~x\in \R^n.
\end{align}
In general, $Z_\zeta K(x)$ is not a support function. To obtain a convex body, Lutwak, Yang and Zhang \cite{LYZ10a} introduce
\begin{align*}
h_{\Pi_\zeta K}(x) :=\min \left\{\lambda >0  : \int_{\Sn} \zeta \left(\frac{x \cdot u}{\lambda h_{K}(u)}\right)d V_K(u)\leq V_n(K) \right\}.
\end{align*}
Since the right side is a support function, this introduces a convex body $\Pi_\zeta K$, the so called Orlicz projection body.
An Orlicz Petty projection inequality was established in \cite{LYZ10a,Bor2013} and a functional version in \cite{Lin2017OPS}.
However, Li and Leng \cite{LL2017OV} showed that Orlicz projection bodies are not valuations in the following sense:
there is no non-trivial $\SLn$ contravariant, convex body valued valuation with respect to the non-associative Orlicz addition on $\MPon$ (Orlicz addition is associative if and only if it is $L_p$ addition for some $p \geq 1$).
Meanwhile, the Orlicz projection function defined by \eqref{Opf1} is also closely related to Orlicz addition,
which was introduced by Gardner, Hug and Weil \cite{GHW14} and Xi, Jin and Leng \cite{XJL14} as an extension of $L_p$ addition. Orlicz addition preserves continuity and is compatible with $\GLn$ transforms.
Let $\varphi: [0,\infty) \to [0,\infty)$ be a convex function satisfying $\varphi(0)=0$ and $\varphi(1)=1$.
The \emph{Orlicz combination}, $K+_{\varphi,\e} L$, of $K,L$ with respect to $\varphi$ is defined by
\begin{align*}
\varphi\left(\frac{h_K}{h_{K+_{\varphi,\e} L}}\right)+\e \varphi\left(\frac{h_L}{h_{K+_{\varphi,\e} L}}\right)=1.
\end{align*}
For $K \in \MK_{(o)}^n$ and $L \in \MK_o^n$, the \emph{Orlicz mixed volume}
\begin{align*}
V_\varphi(K,L):=\lim_{\e \to 0^+} \frac{\varphi_l'(1)}{n}\frac{V_n(K+_{\varphi,\e} L)-V_n(K)}{\e} = \int_{\Sn} \varphi\left(\frac{h_L(u)}{h_K(u)}\right) dV_K(u),
\end{align*}
where $\varphi_l'(1)$ is the left derivative of $\varphi$ at $1$.

The Orlicz Brunn-Minkowski inequality states that
\begin{align*}
\frac{V_\varphi (K,L)}{V_n(K)} \geq  \varphi \left(\left(\frac{V_n(L)}{V_n(K)}\right)^{1/n}\right).
\end{align*}
For other aspects of Orlicz Brunn-Minkowski theory, see \cite{HLYZ10,HuaH12,LR10,Lud10a,LYZ10b,ZHY017orl,WXL2017+}

Now the Orlicz projection function $Z_\zeta K(x)$ defined in \eqref{Opf1} can be written as
\begin{align*}
Z_\zeta K(x) = V_{\varphi_1}(K,[o,x]) + V_{\varphi_2}(K,[-x,o])
\end{align*}
for $\varphi_1, \varphi_2: [0,\infty) \to [0,\infty)$ such that $\varphi_1(t) = \zeta(t)$ and $\varphi_2(t) = \zeta(-t)$ if $\zeta(\pm 1) =1$. 

We use the same notation $Z_\zeta$ to denote the extension of \eqref{Opf1} to $\MK^n$ for a general continuous function $\zeta$,
\begin{align*}
Z_\zeta K (x):= \int_{\Sn \setminus \{h_K=0\}} \zeta \left(\frac{x \cdot u}{h_{K}(u)}\right)d V_K(u), ~~ x\in \R^n,
\end{align*}
when the integral is finite.
Theorems \ref{thm:MKo}-\ref{thm:MP} show that the extension is natural in valuation theory.

We call a valuation \emph{simple} if it vanishes on lower dimensional convex bodies. Let $\Conv$ denote the set of convex functions from $\R^n$ to $\R$. We obtain the following characterization of Orlicz projection functions.
\begin{cor}\label{co:Orl}
A map $Z : \mathcal{P}_o ^n \to \Conv$ is a measurable, simple and $\SLn$ contravariant valuation if and only if there exists a convex function $\zeta: \R \to \R$ such that
\begin{align*}
Z P (x) = \int_{\Sn \setminus \{h_P=0\}} \zeta \left(\frac{x \cdot u}{h_{P}(u)}\right)d V_P(u)
\end{align*}
for every $P \in \MPon$ and $x \in \R^n$. Moreover, the function $\zeta$ is uniquely determined by $Z$.
\end{cor}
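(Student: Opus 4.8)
The plan is to derive this as a consequence of the already-established Theorem \ref{thm:cont}. First I would check the ``if'' direction: given a convex function $\zeta:\R\to\R$, the map $P\mapsto Z_\zeta P$ is $\SLn$ contravariant and measurable (indeed continuous) by the general discussion following Theorem \ref{thm:cont}, and it is simple because $V_P$ is the cone-volume measure, which vanishes on lower-dimensional polytopes (since $S_P$ and hence $V_P=\tfrac1n h_P\,dS_P$ is zero when $\dim P<n$). It remains to see that $Z_\zeta P(x)$ is actually a convex function of $x$; this follows because $x\mapsto \zeta(x\cdot u/h_P(u))$ is convex for each fixed $u$ (composition of the convex $\zeta$ with the linear map $x\mapsto x\cdot u/h_P(u)$), and convexity is preserved under the nonnegative integral $\int d V_P$ over $\Sn\setminus\{h_P=0\}$, noting $V_P\ge 0$ on $\MPon$ since $h_P\ge 0$ there.

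For the ``only if'' direction, suppose $Z:\MPon\to\Conv$ is a measurable, simple, $\SLn$ contravariant valuation. Composing with the inclusion $\Conv\subset\mathcal{C}(\R^n)$, Theorem \ref{thm:cont} applies and yields constants $c_0,c_0',c_{n-1}\in\R$ and a continuous $\zeta:\R\to\R$ with
\begin{align*}
Z P (x)=\int_{\Sn \setminus \{h_P=0\}} \zeta \left(\frac{x \cdot u}{h_{P}(u)}\right)d V_P(u) + c_{n-1} V_1(P,[-x,x]) + c_0V_0(P) + c_0' (-1)^{\dim P} \mathbbm{1}_{\relint P}(o).
\end{align*}
Now I would use simplicity to kill the lower-dimensional terms: evaluating $Z$ on a point $P=\{o\}$ (dimension $0$, so $V_0=1$, $\mathbbm{1}_{\relint P}(o)=1$, and the integral and $V_1$ terms vanish) forces $c_0+c_0'=0$; evaluating on a segment through the origin, say $P=[-e_1,e_1]$ (dimension $1$), the integral term and $V_0$-term must be analysed together with the requirement $ZP\equiv 0$, and similarly on a segment not containing the origin in its relative interior, to conclude $c_0=c_0'=0$ and $c_{n-1}=0$ after testing $V_1([-e_1,e_1],[-x,x])=|x\cdot e_1|\neq 0$. (Here one must be slightly careful that the integral $Z_\zeta$ already vanishes on segments because $V_P=0$ there, so the vanishing of $ZP$ on low-dimensional $P$ directly isolates the three constants.) This leaves $ZP=Z_\zeta P$.

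Finally I must upgrade $\zeta$ from merely continuous to convex. The point is that $Z_\zeta P$ takes values in $\Conv$, i.e.\ $x\mapsto Z_\zeta P(x)$ is convex for every $P\in\MPon$. Choosing $P$ to be a full-dimensional polytope whose cone-volume measure $V_P$ is concentrated as closely as one likes near a single direction $u_0\in\Sn$ (for instance a long thin polytope, or a sequence of polytopes with $V_{P_k}\to c\,\delta_{u_0}$ weakly for suitable normalization), the function $x\mapsto Z_\zeta P_k(x)$ approaches $c\,\zeta(x\cdot u_0/h_{P_k}(u_0))$, and taking a one-parameter slice $x=t v$ with $v\cdot u_0\neq 0$ recovers $t\mapsto\zeta(\text{const}\cdot t)$ up to affine reparametrization; convexity along every such slice for every admissible $P$ forces $\zeta$ itself to be convex. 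The main obstacle is precisely this last step — making the localization of $V_P$ rigorous and ensuring one genuinely probes all of $\zeta$ on all of $\R$ (both signs of the argument), rather than just on a half-line — but it is routine given the weak-convergence behaviour of surface area measures of suitably chosen polytopes. Uniqueness of $\zeta$ is inherited from the uniqueness clause in Theorem \ref{thm:cont}.
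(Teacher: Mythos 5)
Your overall strategy --- apply Theorem \ref{thm:cont}, use simplicity to eliminate the three constants, then read off convexity of $\zeta$ from the convexity of $ZP$ --- is exactly the paper's route, and your verification of the ``if'' direction (convexity of $Z_\zeta P$ as an integral of convex functions of $x$ against the nonnegative measure $V_P$, and simplicity because the integral over $\Sn\setminus\{h_P=0\}$ vanishes whenever $\dim P<n$) is correct and is the part the paper leaves implicit. Two corrections, however. First, your test for $c_{n-1}$ does not work: for a segment $[-e_1,e_1]$ in $\R^n$ with $n\geq 3$ the surface area measure is zero, so $V_1([-e_1,e_1],[-x,x])=0$, not $|x\cdot e_1|$. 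To isolate $c_{n-1}$ you must evaluate $Z$ on an $(n-1)$-dimensional polytope; the paper uses $\{o\}$, $[o,e_1]$ and $T^{n-1}$ in that order, the last giving $c_{n-1}V_1(T^{n-1},[-x,x])=c_{n-1}\tfrac{2}{n!}|x_n|=0$ and hence $c_{n-1}=0$.

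Second, the step you flag as the main obstacle --- localizing $V_P$ near a single direction by a limiting argument --- is unnecessary. The cone-volume measure of $T^n=[o,e_1,\dots,e_n]$ restricted to $\Sn\setminus\{h_{T^n}=0\}$ is already an exact point mass at $u_0=(e_1+\cdots+e_n)/\sqrt n$ with total mass $V_n(T^n)=1/n!$, since every other facet of $T^n$ contains the origin and so has $h_{T^n}=0$ in its normal direction. Hence, as recorded in Lemma \ref{lem4.3}, $Z T^n(te_n)=\tfrac{1}{n!}\zeta(t)$ for all $t\in\R$, with no approximation; since $ZT^n\in\Conv$, its restriction to the line $\R e_n$ is convex, so $\zeta$ is convex, and both signs of the argument are covered at once. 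Your weak-convergence scheme could presumably be made to work, but it replaces a one-line evaluation with a genuinely delicate limit (note that $Z_\zeta$ is only lower semicontinuous in $P$ in general), so the direct evaluation at $T^n$ is the argument to use.
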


\section{Further classification results}
Corollaries \ref{p>=1} and \ref{p<1} characterize $L_p$ projection functions as valuations taking values in functions which are homogeneous of degree $p$. We can also characterize $L_p$ projection functions as homogeneous valuations.
Here we say that a valuation $Z : \mathcal{P}_o ^n \to \mathcal{C}(\mathbb{R}^n)$ is \emph{homogeneous of degree $p$} if $Z(\lambda K) = \lambda^{p} Z K$ for $\lambda >0$. Set $\delta_p^i=1$ for $p=i$ and $\delta_p^i=0$ otherwise.
\begin{cor}\label{qhom}
A map $Z : \mathcal{P}_o ^n \to \mathcal{C}(\mathbb{R}^n)$ is an $\SLn$ contravariant valuation which is homogeneous of degree $n-p$ if and only if there exist constants $c_0,c_0',c_{n-1},\hat{c}_{n-p}^+,\hat{c}_{n-p}^-,c_n \in \R^n$ such that
\begin{align*}
&Z P (x) \\
&=\begin{cases}
\hat{c}_{n-p}^+ \widehat{V}_{p}(P,[o,x]) + \hat{c}_{n-p}^- \widehat{V}_{p}(P,[o, -x]) & \\
\qquad  + \delta_p^{1} c_{n-1} V_1(P,[-x,x]) & \\
\qquad  + \delta_p^n (c_0V_0(P) + c_0' (-1)^{\dim P} \mathbbm{1}_{\relint P}(o)), & p\geq 1, \\
\hat{c}_{n-p}^+ V_{p}(P,[o,x]) + \hat{c}_{n-p}^- V_{p}(P,[o, -x]), & 0<p<1, \\
c_nV_n(P), &p=0, \\
0, & p<0.
\end{cases}
\end{align*}
for every $P \in \mathcal{P}_o ^n$ and $x \in \R^n$.
\end{cor}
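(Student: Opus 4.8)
The plan is to run the classification behind Theorem~\ref{thm:cont}, using homogeneity of degree $p$ in place of its measurability hypothesis, and then to read off which of the surviving terms can occur. For sufficiency I would first record that, by the discussion in Section~\ref{s2}, each of $\widehat V_q(\cdot,[o,\pm x])$ (for $q\ge 1$), $V_q(\cdot,[o,\pm x])$ (for $0<q<1$), $V_1(\cdot,[-x,x])$, $V_n$, $V_0$ and $(-1)^{\dim P}\mathbbm{1}_{\relint P}(o)$ is a measurable $\SLn$ contravariant valuation on $\MPon$, and then compute their homogeneity in the body variable from $h_{\lambda P}=\lambda h_P$ and $S_{\lambda P}=\lambda^{n-1}S_P$: one gets $\widehat V_q(\lambda P,[o,\pm x])=\lambda^{\,n-q}\widehat V_q(P,[o,\pm x])$, hence degree $p$ when $q=n-p$, while $V_1(\cdot,[-x,x])$ has degree $n-1$, $V_n$ degree $n$, and $V_0$ and $(-1)^{\dim P}\mathbbm{1}_{\relint P}(o)$ degree $0$. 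Thus in each branch of the formula the right-hand side is homogeneous of degree $p$, the Kronecker factors $\delta_p^{\,n-1}$ and $\delta_p^{\,0}$ being exactly what is needed to discard the degree $n-1$ and degree $0$ summands unless $p$ equals $n-1$ or $0$.

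For necessity I would go through the argument for Theorem~\ref{thm:cont}. In that proof, measurability of $Z$ enters only to force the one-variable data extracted from $Z$ — the function $\zeta$ and the scalars attached to the lower-order pieces — to be regular enough to be identified through the associated Cauchy/Pexider-type functional equations. If $Z$ is instead homogeneous of degree $p$, these data inherit a homogeneity relation, which by itself already pins them down: carrying the reduction of Theorem~\ref{thm:cont} up to the functional-equation step, one finds $\zeta(\mu t)=\mu^{\,n-p}\zeta(t)$ for all $\mu>0$, and comparing homogeneity degrees among the remaining summands forces $c_{n-1}=0$ unless $p=n-1$ and $c_0=c_0'=0$ unless $p=0$. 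Writing $q:=n-p$, a solution of $\zeta(\mu t)=\mu^{\,q}\zeta(t)$ is determined by $\zeta(\pm1)$ and $\zeta(0)$; for $q>0$ this forces $\zeta(t)=c_{n-1}^+(t)_+^{\,q}+c_{n-1}^-(t)_-^{\,q}$, which is automatically continuous, while for $q\le0$ the continuity of $x\mapsto ZP(x)$ rules out every such $\zeta$ except the constants (when $q=0$) and $0$ (when $q<0$), since any other homogeneous $\zeta$ would make $Z_\zeta P$ jump where $x$ meets a facet normal of $P$. Substituting back into \eqref{val22}: for $q>0$ the $\zeta$-integral is $c_{n-1}^+\widehat V_q(P,[o,x])+c_{n-1}^-\widehat V_q(P,[o,-x])$, which equals $c_{n-1}^+V_q(P,[o,x])+c_{n-1}^-V_q(P,[o,-x])$ when $0<q<1$ because the integrand then extends integrably over $\{h_P=0\}$ (Section~\ref{s2}); for $q=0$ it is $c_n\!\int_{\Sn\setminus\{h_P=0\}}dV_P=c_nV_n(P)$; and for $q<0$ it vanishes. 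Grouping the surviving terms over the ranges $p\le n-1$, $n-1<p<n$, $p=n$, $p>n$ reproduces the four cases, and a posteriori $Z$ is measurable.

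The hard part will be the necessity direction: one must revisit the proof of Theorem~\ref{thm:cont}, isolate each use of measurability, and verify that it can be replaced by the homogeneity relation — the substance being that homogeneity of $Z$ propagates to every auxiliary one-variable function and constrains it at least as tightly as measurability does. Two subsidiary points also need care: the bookkeeping at the boundary values $p=n-1$ (where an extra degree $n-1$ summand survives alongside $\widehat V_1(\cdot,[o,\pm x])$) and $p=n$ (where the $\zeta$-part degenerates to the differently shaped term $V_n$); and, in the range $q=n-p\le0$, resisting the temptation to assume $\zeta$ continuous (which is no longer hypothesized once measurability is dropped) and instead using continuity of $x\mapsto ZP(x)$ to kill the non-constant homogeneous-degree-$0$ solutions and the nonzero negative-degree ones. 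One might alternatively dispatch the range $p\ge n$ by a short direct argument.
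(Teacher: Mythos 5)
Your proposal is correct and follows essentially the same route as the paper: invoke the non-measurable variant of Theorem \ref{thm:cont} (Theorem 1.2$'$), derive $\zeta(\mu t)=\mu^{\,n-p}\zeta(t)$ from homogeneity, and then identify $\zeta$ and kill the mismatched-degree constants exactly as in Corollaries \ref{p>=1} and \ref{p<1}. The only simplification you miss is that there is no need to ``revisit'' the proof of Theorem \ref{thm:cont}: since $Z(rT^n)(rte_n)=r^{p}\,ZT^n(rte_n)$ and $ZT^n\in\mathcal{C}(\R^n)$, the one-variable measurability hypothesis of Theorem 1.2$'$ is verified directly (and the $\zeta$ it produces is already continuous, so your detour through continuity of $x\mapsto ZP(x)$ in the range $n-p\leq 0$ is unnecessary, though harmless).
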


If we further assume \emph{translation invariance} ($Z (P+y) = Z P$ for every $P \in \MPon$ (or $\MP^n$) and $y \in \R^n$ such that $P+y \in \MPon$ (or $\MP^n$)), then we characterize the classical projection function, volume and the Euler characteristic. This is a special case of characterizing the classical mixed volumes.
\begin{cor}\label{trans}
A map $Z : \mathcal{P}_o ^n \to \mathcal{C}(\mathbb{R}^n)$ is a measurable, translation invariant and $\SLn$ contravariant valuation if and only if there exist constants $c_0,c_{n-1},c_n \in \R$ such that
\begin{align*}
Z P(x) =  c_n V_n(P) + c_{n-1} V_1(P,[-x,x]) + c_0 V_0 (P)
\end{align*}
for every $P \in \mathcal{P}_o ^n$ and $x \in \R^n$.
\end{cor}

\begin{cor}\label{trans2}
A map $Z : \MP^n \to \mathcal{C}(\mathbb{R}^n)$ is a measurable, translation invariant and $\SLn$ contravariant valuation if and only if there exist constants $c_0,c_{n-1},c_n \in \R$ such that
\begin{align*}
Z P(x) =  c_n V_n(P) + c_{n-1} V_1(P,[-x,x]) + c_0 V_0 (P)
\end{align*}
for every $P \in \mathcal{P}_o ^n$ and $x \in \R^n$.
\end{cor}

We omit other versions of all the Corollaries corresponding to Theorems \ref{thm:MKo}, \ref{thm:MK} and \ref{thm:MP} since they are similar and easy to establish.

Finally, we show how our results are related to the characterization of $L_p$ mixed volumes. Although the following corollary is not a strong result, we think it might be inspiring.

%

A map $Z : \MK^n \times \MK^n \to \R$ is called $\SLn$ invariant if $Z(\phi K, \phi L) = Z(K,L)$ for any $K,L \in \MK^n$ and $\phi \in \SLn$. We say that $Z$ is a valuation with respect to the first variable if $Z(\cdot,L)$ is a valuation for any fixed $L \in \MK^n$, and $L_p$ additive with respect to the second variable if $Z(K,L_1+_p L_2) = Z(K,L_1) + Z(K,L_2)$ for any $K \in \MK^n, L_1,L_2 \in \MK^n$. For $p>1$, we further assume that $L_1,L_2 \in \MKon$.
Let $\MK_c^n$ be the set of symmetric convex bodies in $\R^n$ centered at the origin.

\begin{cor}\label{co:mixv}
Let $p \geq 1$ and $p$ not an even integer. A map $Z : \MPon \times \MK_c^n \to \R$ is an $\SLn$ invariant map which is a measurable valuation with respect to the first variable and continuous, $L_p$ additive with respect to the second variable if and only if there exist constants $\widehat{c}_{n-p},c_{n-1} \in \R$ such that
\begin{align*}
Z (P,L) =  \widehat{c}_{n-p} \widehat{V}_p(P,L) + c_{n-1}\delta_p^1 V_1(P,L)
\end{align*}
for every $P \in \MPon$ and $L \in \MK_c^n$.
\end{cor}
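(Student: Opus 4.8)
The plan is to reduce the two-variable statement to the one-variable classification in Theorem \ref{thm:cont} by fixing first the second variable and then the first. First I would fix $L \in \MK_c^n$ and consider the map $P \mapsto Z(P,L)$, which is a measurable, $\SLn$ invariant, real valued valuation on $\MPon$ (here $\SLn$ invariance of the one-variable map follows from the two-variable invariance because $\phi$ can be applied simultaneously and $L$ ranges over all of $\MK_c^n$, in particular over $\phi$-translates of itself — more carefully, for fixed $L$ we only get invariance under the stabilizer of $L$, so I would instead view $Z(\cdot,L)$ as a constant-function-valued $\SLn$ contravariant valuation only after the second reduction; see below). The cleaner route: fix $x \in \R^n$ and consider $W_x(P) := Z(P,[-x,x])$ if $Z$ is $L_p$ additive in the second variable, or more generally build a function-valued valuation out of $Z$. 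Since segments $[-x,x]$ and $[-y,y]$ satisfy $[-x,x] +_p [-y,y]$ has the same $L_p$-behaviour as controlled by $|x\cdot u|^p + |y\cdot u|^p$, the map $x \mapsto Z(P,[-x,x])$ inherits enough structure; the key observation is that $\SLn$ invariance of $Z$ translates into $\SLn$ contravariance of $P \mapsto (x \mapsto Z(P,[-\phi^{-1}x,\phi^{-1}x]))$ because $\phi[-x,x] = [-\phi x,\phi x]$.

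So the main step is: define $\bar Z : \MPon \to \mathcal C(\R^n)$ by $\bar Z P(x) = Z(P,[-x,x])$. Continuity in the second variable gives that $\bar Z P$ is continuous in $x$ (segments depend continuously on $x$ in the Hausdorff metric, away from $x=o$ where one argues separately, using that $[-x,x]$ degenerates to $\{o\}$ and $Z(P,\{o\}) = 0$ by $L_p$ additivity with $L_1 = L_2 = \{o\}$). Then $\bar Z$ is a measurable, $\SLn$ contravariant valuation in $P$, so Theorem \ref{thm:cont} applies: $\bar Z P(x) = \int_{\Sn\setminus\{h_P=0\}} \zeta(x\cdot u / h_P(u))\,dV_P(u) + c_{n-1} V_1(P,[-x,x]) + c_0 V_0(P) + c_0'(-1)^{\dim P}\mathbbm 1_{\relint P}(o)$ for some continuous $\zeta$ and constants. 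Now impose the remaining constraints. The $L_p$ additivity in the second variable, $Z(P,[-x,x] +_p [-y,y]) = Z(P,[-x,x]) + Z(P,[-y,y])$, when written through \eqref{padd} — noting $h_{[-x,x]+_p[-y,y]}(u)^p = |x\cdot u|^p + |y\cdot u|^p$ — forces the "one-dimensional Cauchy-type" equation on $\zeta$: the function $t \mapsto \zeta(t^{1/p})$ (suitably interpreted with signs) must be additive, hence linear, which pins down $\zeta(t) = a_+ (t)_+^p + a_- (t)_-^p$ up to constants; and the $p$-additivity also kills the $V_0$ and $\mathbbm 1_{\relint P}(o)$ terms unless $p$ forces them to vanish, which is where the hypothesis that $p$ is not an even integer enters (for $p$ an even integer $(t)_+^p + (t)_-^p = t^p$ collapses the asymmetric parts). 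Finally $\SLn$ invariance of $Z$ restricted appropriately, together with $L \in \MK_c^n$ being centrally symmetric, forces $a_+ = a_-$, collapsing the asymmetric $L_p$ projection functions into the symmetric $\widehat V_p(P,[-x,x])$, and then a polarization/uniqueness argument recovers general $L \in \MK_c^n$ from segments via the Orlicz/$L_p$ representation $V_p(K,L) = \frac1n\int |{\cdot}|$-type integral, using that $\MK_c^n$ is generated under $L_p$-addition-limits by symmetric segments.

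The main obstacle I anticipate is the passage from segments $[-x,x]$ back to arbitrary $L \in \MK_c^n$: one needs that $Z(P,L)$ is determined by its values $Z(P,[-x,x])$, which requires knowing that the symmetric convex bodies are the closed $L_p$-additive hull of symmetric segments and that $Z(P,\cdot)$, being continuous and $L_p$ additive, is determined on this hull — essentially a Hlawka-type or mixed-volume integral representation argument. A secondary delicate point is handling the degenerate segment $x = o$ and the lower-dimensional polytopes $P$ carefully, to justify that the $c_0, c_0', \widetilde c_0$-type terms coming from Theorem \ref{thm:cont} really are forced to zero (or absorbed) under $L_p$ additivity; this is exactly the step that needs $p$ not even, and I would isolate it as a short lemma. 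Everything else is bookkeeping: checking that each claimed functional (the $\widehat V_p(P,L)$ and $V_1(P,L)$) genuinely has all the asserted properties is a routine verification using \eqref{pmix1}, \eqref{pmix2} and the known valuation property of $\widehat V_p(P,[o,\pm x])$.
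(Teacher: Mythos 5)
Your overall strategy --- specialize the second argument to symmetric segments, view $P\mapsto\bigl(x\mapsto Z(P,[-x,x])\bigr)$ as an $\SLn$ contravariant function valued valuation, classify it by the one-variable theorem, and then climb back to general $L\in\MK_c^n$ --- is exactly the paper's. But there are two genuine gaps. First, you have misplaced the hypothesis that $p$ is not an even integer. The terms $c_0V_0(P)$ and $c_0'(-1)^{\dim P}\mathbbm{1}_{\relint P}(o)$ are \emph{not} eliminated by any parity consideration: they are homogeneous of degree $0$ in $x$, while $L_p$ additivity in the second variable forces $x\mapsto Z(P,[-x,x])$ to be homogeneous of degree $p\geq 1$ (your own Cauchy-equation observation, applied to $f(t)=Z(P,[-t^{1/p}x,t^{1/p}x])$, gives $Z(P,[-tx,tx])=t^pZ(P,[-x,x])$), so they drop out by comparing degrees of homogeneity, for every $p\geq 1$. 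The evenness of $p$ plays no role at that stage.

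Second, the step you flag as the ``main obstacle'' is where the hypothesis actually lives, and your proposed resolution is incorrect as stated: $\MK_c^n$ is \emph{not} the closed $L_p$-additive hull of symmetric segments --- that hull is the set of $L_p$ zonoids, a proper closed subset. The paper's argument extends the formula by continuity and $L_p$ additivity to \emph{general} $L_p$ zonoids, i.e.\ bodies $L$ with $h_L^p(x)=\int_{\Sn}|x\cdot u|^p\,d\mu(u)$ for a \emph{signed} Borel measure $\mu$, and then invokes the fact that general $L_p$ zonoids are dense in $\MK_c^n$ precisely when $p$ is not an even integer (Rubin's injectivity result combined with Schneider's approximation theorem). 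Without this density statement the corollary does not follow from the values on segments, and for $p$ an even integer it genuinely fails since $x\mapsto|x\cdot u|^p$ then spans only a finite-dimensional space. A further, smaller, gap: applying Theorem \ref{thm:cont} requires measurability of $P\mapsto\bar Z P$ as a $\mathcal{C}(\R^n)$-valued map, which is not among your hypotheses (you only know measurability of $P\mapsto Z(P,L)$ for each fixed $L$); the paper avoids this by first proving homogeneity of degree $p$, deducing measurability of $r\mapsto Z'(rT^n)(rte_n)=r^pZ(rT^n,[-te_n,te_n])$, and then applying the weaker-hypothesis Theorem~1.2$'$ instead.
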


\section{Valuations and $\SLn$ contravariance}\label{exa}
Let $[A_1,\dots,A_i]$ denote the convex hull of the sets $A_1,\dots,A_i$ in $\R^n$.

\begin{thm}\label{thm:val}
Let $\zeta: \R \to \R$ be a continuous function and define a map $Z_\zeta : \MK^n \to \mathcal{C}(\R^n)$ by
\begin{align*}
Z_\zeta K (x)= \int_{\Sn \setminus \{h_K=0\}} \zeta \left(\frac{x \cdot u}{h_{K}(u)}\right)d V_K(u), ~~ x\in \R^n,
\end{align*}
for every $K \in \MK^n$ if the integral exists and is finite for every $x \in \R^n$.
We have the following conclusions: \\
(i) If $Z_\zeta$ is well defined on $\MK^n$ (or $\MP^n$), then $Z_\zeta$ is an $\SLn$ contravariant valuation on $\MK^n$ (or $\MP^n$). \\
(ii) $Z_\zeta $ is well defined and measurable on $\MP^n$ without any restriction on $\zeta$. \\
(iii) If $\lim_{|t|\to \infty}\zeta(t)/t =0$, then $Z_\zeta$ is well defined and continuous on $\MK^n$.
\end{thm}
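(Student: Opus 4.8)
The three parts are of increasing difficulty, so I would dispatch them in the order (ii), (i), (iii). Part (ii) is almost immediate: for a polytope $P$ the surface area measure is finite and atomic, $S_P=\sum_F\mathcal H^{n-1}(F)\,\delta_{u_F}$, the sum running over the facets $F$ of $P$ with outer unit normals $u_F$ (for $\dim P=n-1$ the two atoms $\pm u$ occur, for $\dim P\le n-2$ the measure is $0$), so $dV_P=\tfrac1n h_P\,dS_P$ gives $Z_\zeta P(x)=\tfrac1n\sum_{F:\,h_P(u_F)\neq0}\zeta\!\big(x\cdot u_F/h_P(u_F)\big)h_P(u_F)\mathcal H^{n-1}(F)$, a finite sum of finite reals; hence $Z_\zeta P$ is well defined and, being for fixed $P$ a finite sum of continuous functions of $x$, lies in $\mathcal C(\R^n)$. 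For measurability of $P\mapsto Z_\zeta P$ I would stratify $\MP^n$ into the countably many Borel classes of polytopes of a fixed dimension and combinatorial type; on each such class the facets can be labelled consistently, $P\mapsto u_F(P)$, $P\mapsto h_P(u_F(P))$ and $P\mapsto\mathcal H^{n-1}(F(P))$ are continuous while $P\mapsto\mathbbm{1}\{h_P(u_F(P))\neq0\}$ is Borel, so composing with the continuous $\zeta$ shows $Z_\zeta$ is Borel on each class, hence on $\MP^n$. No growth hypothesis on $\zeta$ enters here.

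For part (i) it helps to rewrite $Z_\zeta$ so that the integrand does not depend on the body. For $\SLn$ contravariance I would use, for $\phi\in\GLn$, the identities $h_{\phi K}(u)=h_K(\phi^{\mathrm T}u)$ and $\int_{\Sn}g\,dS_{\phi K}=|\det\phi|\int_{\Sn}g(\overline{\phi^{-\mathrm T}u})\,|\phi^{-\mathrm T}u|\,dS_K(u)$, where $\bar v=v/|v|$; applying the second to $g(v)=\zeta\big(x\cdot v/h_{\phi K}(v)\big)h_{\phi K}(v)$ and simplifying by $h_{\phi K}(\overline{\phi^{-\mathrm T}u})=h_K(u)/|\phi^{-\mathrm T}u|$ and $x\cdot\overline{\phi^{-\mathrm T}u}=(\phi^{-1}x)\cdot u/|\phi^{-\mathrm T}u|$, every factor $|\phi^{-\mathrm T}u|$ cancels and one gets $Z_\zeta(\phi K)(x)=|\det\phi|\,Z_\zeta K(\phi^{-1}x)$, which for $\phi\in\SLn$ is exactly contravariance (the excluded sets $\{h_{\phi K}=0\}$ and $\{h_K=0\}$ correspond under $u\mapsto\overline{\phi^{-\mathrm T}u}$). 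For the valuation identity, given $K,L,K\cup L\in\MK^n$ I would write $n\,Z_\zeta K(x)=\int_{\R^n\times\Sn}\zeta\big(\tfrac{x\cdot u}{y\cdot u}\big)(y\cdot u)\,\mathbbm{1}\{y\cdot u\neq0\}\,d\Lambda_K(y,u)$, where $\Lambda_K$ is the image of the Hausdorff measure $\mathcal H^{n-1}$ restricted to $\partial K$ under $y\mapsto(y,\nu_K(y))$ — the top order support measure, with the usual two-sided convention when $\dim K<n$, and with image $S_K$ under $(y,u)\mapsto u$; since the integrand is now independent of the body, the identity follows from the additivity $\Lambda_K+\Lambda_L=\Lambda_{K\cup L}+\Lambda_{K\cap L}$, the assumed well-definedness guaranteeing all four integrals converge. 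Equivalently, on $\MP^n$ this is facet bookkeeping: facets of $K\cup L$ and of $K\cap L$ pair off with facets of $K$ and $L$ carrying the same outer normal, while a facet common to $K$ and $L$ lies in $\intr(K\cup L)$ and is a face of $K\cap L$ carrying both normals $\pm u$, so one checks its contributions cancel — here it is essential that $V_K$ is a signed measure.

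For part (iii) the decisive point is that $\lim_{|t|\to\infty}\zeta(t)/t=0$ tames the integrand. Setting $g_K(u)=\zeta\big(x\cdot u/h_K(u)\big)h_K(u)$ for $u\notin\{h_K=0\}$ and $g_K(u)=0$ otherwise, and writing $t=x\cdot u/h_K(u)$, one has $|g_K(u)|\le(\max_{[-1,1]}|\zeta|)\,\|h_K\|_\infty$ when $|t|\le1$ and $|g_K(u)|=|x\cdot u|\,|\zeta(t)/t|\le|x|\sup_{|s|\ge1}|\zeta(s)/s|$ when $|t|>1$, so $g_K$ is bounded by a constant depending only on $|x|$ and $\|h_K\|_\infty$; letting $u$ tend to $\{h_K=0\}$ in these same estimates shows moreover that $g_K$ is continuous on all of $\Sn$. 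Boundedness gives well-definedness (the surface area measure being finite) and, by dominated convergence in $x$, that $Z_\zeta K\in\mathcal C(\R^n)$. For continuity in $K$, if $K_j\to K$ then $h_{K_j}\to h_K$ uniformly and $S_{K_j}\to S_K$ weakly, and I would split $n\big(Z_\zeta K_j(x)-Z_\zeta K(x)\big)=\int(g_{K_j}-g_K)\,dS_{K_j}+\big(\int g_K\,dS_{K_j}-\int g_K\,dS_K\big)$: the second term tends to $0$ by weak convergence of $S_{K_j}$ against the fixed continuous function $g_K$, and the first is at most $\|g_{K_j}-g_K\|_{\infty,\Sn}\,S_{K_j}(\Sn)$ with $S_{K_j}(\Sn)$ bounded.

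So everything reduces to $\|g_{K_j}-g_K\|_{\infty,\Sn}\to0$, and this is the main obstacle. On a closed $A\subseteq\Sn$ disjoint from $\{h_K=0\}$ one has $h_K\ge\delta_A>0$, hence $h_{K_j}\ge\delta_A/2$ on $A$ for large $j$, and uniform convergence of $g_{K_j}$ to $g_K$ there is routine from $h_{K_j}\to h_K$ uniformly and uniform continuity of $\zeta$ on compacta. Near $\{h_K=0\}$ one cannot differentiate freely, and the argument I would use is: given $\e>0$, first pick $M$ with $|x|\sup_{|s|\ge M}|\zeta(s)/s|<\e$; then pick $\delta>0$ so small that on $U:=\{|h_K|<\delta\}$ one has $|g_K|<\e$ and $2\delta\max_{[-M,M]}|\zeta|<\e$; then take $j$ large so $\|h_{K_j}-h_K\|_\infty<\delta$, hence $|h_{K_j}|<2\delta$ on $U$; splitting on $U$ according to whether $|x\cdot u/h_{K_j}(u)|\le M$ or not then forces $|g_{K_j}|<C\e$ there as well, giving uniform $O(\e)$-closeness on $U$ and, by the routine estimate, on $\Sn\setminus U$. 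Uniformity of all convergences in $x$ over compact sets comes from uniform boundedness and equicontinuity of the families $\{g_K(\cdot\,;x)\}$. Apart from this estimate, the remaining delicate points are the degenerate/shared-facet cancellation in part (i) and checking in part (ii) that the combinatorial stratification of $\MP^n$ is by Borel sets carrying continuously varying facet data.
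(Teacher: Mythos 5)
Your proof is correct, but two of its three legs run along genuinely different tracks from the paper's. For the valuation property in (i), the paper splits $\Sn$ into $\{h_K=h_L\}$, $\{h_K>h_L\}$, $\{h_K<h_L\}$ and combines the reverse-Gauss-map identities with $h_{K\cup L}=\max\{h_K,h_L\}$, $h_{K\cap L}=\min\{h_K,h_L\}$ and the valuation property of the surface area measure on the first piece; your lift to the normal bundle, which makes the integrand $\zeta\bigl(\frac{x\cdot u}{y\cdot u}\bigr)(y\cdot u)$ body-independent and reduces everything to the additivity of the top-order support measure $\Lambda_K$, is cleaner, provided you cite that additivity (it is a standard result in Schneider's book) and handle the two-sided convention for lower-dimensional bodies --- exactly the degenerate case you flag. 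For measurability in (ii), the paper does not stratify at all: it writes $Z_\zeta=Z_\zeta^+-Z_\zeta^-$, proves each part is lower semicontinuous on $\MP^n$ locally uniformly in $x$, and deduces that preimages of the subbasic sets $\mathcal{S}(C,(-\infty,t])$ and $\mathcal{S}(C,(t,\infty))$ are respectively closed and open; your combinatorial stratification is a legitimate alternative, but the Borel-ness of the strata and a measurable (or symmetric-function) treatment of the facet labelling are precisely the details you would have to write out, whereas the paper's route avoids them and is reused verbatim for Corollary \ref{co:MK}. Part (iii) is essentially the paper's argument: your two-regime bound on $g_K$ is the paper's estimate $|\zeta(t)|\le\max\{\e|t|,c\}$, and your observation that $g_K$ extends continuously by $0$ across $\{h_K=0\}$ is a tidy way to package the weak-convergence step; the paper instead splits the integral over $\{|h_K|>\delta\}$ and $\{|h_K|\le\delta\}$ and treats $K=\{o\}$ separately, a case your $U=\Sn$ degeneration also covers.
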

\begin{proof}
(i) First, we show that $Z_\zeta$ is $\SLn$ contravariant.
Let $\phi \in \SL{n}$, $K \in \MK^n$. For any Borel set $\omega \subset \Sn$, we have $V_{\phi K} (\omega) = V_K(\overline{\phi^t \omega})$, where $\overline{\phi^t \omega} = \{v=\frac{\phi^{t}u}{|\phi^{t}u|}: u \in \omega \}$; see \cite{BLYZ13}. Then
\begin{align*}
\int_{\Sn} f(u)dV_{\phi K}(u) = \int_{\Sn} f\left(\frac{\phi^{-t}v}{|\phi^{-t}v|}\right)dV_{K}(v)
\end{align*}
for any continuous function $f$ on $\Sn$. Also, since $\{h_{\phi K}=0\} = \overline{\phi^{-t} \{h_K=0\}}$,
\begin{align*}
Z_\zeta (\phi K) (x) &=\int_{\Sn \setminus \{h_{\phi K}=0\}} \zeta \left(\frac{x \cdot u}{h_{\phi K}(u)}\right)d V_{\phi K}(u) \\
&=\int_{\Sn \setminus \{h_K=0\}} \zeta \left(\frac{x \cdot \frac{\phi^{-t}v}{|\phi^{-t}v|}}{h_{\phi K}\left(\frac{\phi^{-t}v}{|\phi^{-t}v|}\right)}\right)d V_{K}(v) \\
&=\int_{\Sn \setminus \{h_K=0\}} \zeta \left(\frac{\phi^{-1} x \cdot v}{h_{K}(v)}\right)d V_{K}(v) \\
&=Z_\zeta K (\phi^{-1} x).
\end{align*}

Second, we show that $Z_\zeta$ is a valuation. Let $K,L \in \MK^n$ such that $K \cup L \in \MK^n$. We divide $\Sn$ into three parts
\begin{align*}
\omega_1 = \{u \in \Sn: h_K(u)=h_L(u)\}, \\
\omega_2 = \{u \in \Sn: h_K(u)>h_L(u)\}, \\
\omega_3 = \{u \in \Sn: h_K(u)<h_L(u)\}.
\end{align*}
Let $\nu_K^{-1}$ denote the \emph{reverse Gauss map}, that is, for any $u \in \Sn$, $\nu_K^{-1}(u)$ is the set of boundary point of $K$ such that $u$ is a normal vector corresponding to those points. We have
\begin{align*}
\nu_{K \cup L}^{-1} (A_1)&= \nu_{K}^{-1} (A_1) \cup \nu_{L}^{-1} (A_1),~ \nu_{K \cap L}^{-1} (A_1)= \nu_{K}^{-1} (A_1) \cap \nu_{L}^{-1} (A_1), \\
&\nu_{K \cup L}^{-1} (A_2) = \nu_{K}^{-1}(A_2),~ \nu_{K \cap L}^{-1} (A_2) = \nu_{L}^{-1}(A_2),\\
&\nu_{K \cup L}^{-1} (A_3) = \nu_{L}^{-1}(A_3),~ \nu_{K \cap L}^{-1} (A_3) = \nu_{K}^{-1}(A_3)
\end{align*}
for any Borel set $A_i \subset \omega_i$.
Also
\begin{align*}
h_{K \cup L}(u)= \max\{h_K(u),h_L(u)\},~
h_{K \cap L}(u)= \min\{h_K(u),h_L(u)\}.
\end{align*}
Recall that the surface area measure is the pushforward of the $(n-1)$-dimensional Lebesgue measure with respect to the Gauss map, we have
\begin{align*}
V_{K \cup L}(A_1) + V_{K \cap L}(A_1) = V_K(A_1) + V_L(A_1), \\
V_{K \cup L}(A_2) = V_K(A_2),~V_{K \cap L}(A_2) =V_L(A_2),\\
V_{K \cup L}(A_3) = V_L(A_3),~V_{K \cap L}(A_3) =V_K(A_3)
\end{align*}
for any Borel set $A_i \subset \omega_i$.
Thus
\begin{align*}
&\int_{\omega_1 \setminus \{h_{K \cup L}=0\}} \zeta \left(\frac{x \cdot u}{h_{K \cup L}(u)}\right)d V_{K\cup L}(u)
+ \int_{\omega_1 \setminus \{h_{K \cap L}=0\}} \zeta \left(\frac{x \cdot u}{h_{K \cap L}(u)}\right)d V_{K\cap L}(u) \\
&\qquad =\int_{\omega_1 \setminus \{h_{K}=h_{L}=0\}} \zeta \left(\frac{x \cdot u}{h_{K}(u)=h_{L}(u)}\right)(d V_{K\cup L}(u)+d V_{K\cap L}(u)) \\
&\qquad =\int_{\omega_1 \setminus \{h_K=0\}} \zeta \left(\frac{x \cdot u}{h_{K}(u)}\right)d V_K(u)
+ \int_{\omega_1 \setminus \{h_L=0\}} \zeta \left(\frac{x \cdot u}{h_{L}(u)}\right)d V_L(u),
\end{align*}
\begin{align*}
&\int_{\omega_2 \setminus \{h_{K \cup L}=0\}} \zeta \left(\frac{x \cdot u}{h_{K \cup L}(u)}\right)d V_{K\cup L}(u)
+ \int_{\omega_2 \setminus \{h_{K \cap L}=0\}} \zeta \left(\frac{x \cdot u}{h_{K \cap L}(u)}\right)d V_{K\cap L}(u) \\
&\qquad =\int_{\omega_2 \setminus \{h_{K}=0\}} \zeta \left(\frac{x \cdot u}{h_{K}(u)}\right) d V_{K\cup L}(u)+\int_{\omega_2 \setminus \{h_{L}=0\}} \zeta \left(\frac{x \cdot u}{h_{L}(u)}\right)d V_{K\cap L}(u) \\
&\qquad =\int_{\omega_2 \setminus \{h_K=0\}} \zeta \left(\frac{x \cdot u}{h_{K}(u)}\right)d V_K(u)
+ \int_{\omega_2 \setminus \{h_L=0\}} \zeta \left(\frac{x \cdot u}{h_{L}(u)}\right)d V_L(u)
\end{align*}
and
\begin{align*}
&\int_{\omega_3 \setminus \{h_{K \cup L}=0\}} \zeta \left(\frac{x \cdot u}{h_{K \cup L}(u)}\right)d V_{K\cup L}(u)
+ \int_{\omega_3 \setminus \{h_{K \cap L}=0\}} \zeta \left(\frac{x \cdot u}{h_{K \cap L}(u)}\right)d V_{K\cap L}(u) \\
&\qquad =\int_{\omega_3 \setminus \{h_{L}=0\}} \zeta \left(\frac{x \cdot u}{h_{L}(u)}\right) d V_{K\cup L}(u)+\int_{\omega_3 \setminus \{h_{K}=0\}} \zeta \left(\frac{x \cdot u}{h_{K}(u)}\right)d V_{K\cap L}(u) \\
&\qquad =\int_{\omega_3 \setminus \{h_L=0\}} \zeta \left(\frac{x \cdot u}{h_{L}(u)}\right)d V_L(u)
+ \int_{\omega_3 \setminus \{h_K=0\}} \zeta \left(\frac{x \cdot u}{h_{K}(u)}\right)d V_K(u).
\end{align*}
All together we get that $Z_\zeta$ is a valuation.

(ii) It is clear that $Z_\zeta$ is well defined on $\MP^n$.

To show that $\zeta$ is measurable on $\MP^n$, we can rewrite $Z_\zeta = Z_\zeta^+ - Z_\zeta^-$ with
\begin{align*}
Z_\zeta^\pm P (x) = \int_{\Sn \setminus \{h_P=0\}} \left(\zeta \left(\frac{x \cdot u}{h_{P}(u)}\right) h_P(u)\right)_{\pm}d S_P(u)
\end{align*}
for every $P \in \MP^n$ and $x \in \R^n$.
We claim that $Z_\zeta ^\pm (\cdot) (x)$ are lower semi-continuous on $\MP^n$ for every $x \in \R^n$.

Indeed, let $P_i,P \in \MP^n$ and $P_i \to P$.
First if $h_P(u) > 0$ for all $u \in \Sn$,
then for sufficiently large $i$, $h_{P_i}(u) > 0$ for all $u \in \Sn$.
Since $\left(\zeta \left(\frac{x \cdot u}{h_{P_i}(u)}\right)h_{P_i}(u)\right)_{+} \to \left(\zeta \left(\frac{x \cdot u}{h_{P}(u)}\right)h_{P}(u)\right)_{+}$ uniformly on any compact set $C \times \Sn \ni (x,u)$ and the surface area measures $S_{P_i} \to S_{P}$ weakly,
it is easy to see that $Z_\zeta P_i(x) \to Z_\zeta P(x)$.

Now assume that there is a $u \in \Sn$ such that $h_P(u) =0$. Since $P$ is a polytope, there is a suitable $\delta >0$ such that $S_P(\{0<|h_P| \leq \delta \})=0$. Here $\{0<|h_P| \leq \delta\}:=\{u \in \Sn: 0<|h_P(u)| \leq \delta\}$. We have
\begin{align*}
&\lim\limits_{i \to \infty} \int_{ \{|h_P| > \delta \} } \zeta \left(\left(\frac{x \cdot u}{h_{P_i}(u)}\right)h_{P_i}(u)\right)_+ dS_{P_i}(u) \\
& \qquad = \int_{ \{|h_P| > \delta \} } \left(\zeta \left(\frac{x \cdot u}{h_{P}(u)}\right)h_{P}(u)\right)_+ dS_P(u)
\end{align*}
uniformly on any compact set $C \ni x$ and
\begin{align*}
&\int_{ \{|h_P| \leq \delta \} \setminus \{h_{P_i}=0\} } \zeta \left(\left(\frac{x \cdot u}{h_{P_i}(u)}\right)h_{P_i}(u)\right)_+ dS_{P_i}(u)  \\
& \qquad \geq 0 \\
& \qquad = \int_{ \{|h_P| \leq \delta \} \setminus \{h_{P}=0\} } \zeta \left(\left(\frac{x \cdot u}{h_{P}(u)}\right)h_{P}(u)\right)_+ dS_{P}(u).
\end{align*}
Hence
$$\liminf\limits_{i \to \infty} Z_\zeta^+ P_i (x) \geq Z_\zeta^+ P (x).$$
Similarly $Z_\zeta^-(\cdot)(x)$ is lower semi-continuous.
Moreover, the lower semi-continuity is locally uniform with respect to $x$. That is to say, for any compact set $C \subset \R^n$ and $\varepsilon>0$, we have $Z_\zeta^\pm P_i (x) > Z_\zeta^\pm P (x) - \varepsilon$ for sufficient large $i$ not depending on the choice of $x \in C$.

Next we show that $Z_\zeta^\pm$ are measurable.
Let $\mathcal{S}(C,U):= \{g \in \mathcal{C}(\R^n): g(C) \subset U\}$, where $C$ is a compact set in $\R^n$ and $U$ is an open set in $\R$.
The collection of all $\mathcal{S}(C,U)$ forms a subbase of $\mathcal{C}(\R^n)$; see \cite[Section 46]{Mun2000topo}.
Let $\mathcal{B}_\Q$ denote the set of balls in $\R^n$ whose centers and radii are rational and let $\mathcal{U}_\Q$ denote the set of connected open sets in $\R$ whose end points are rational. The collection of $\mathcal{S}(C,U): C \in \mathcal{B}_\Q, U \in \mathcal{U}_\Q$ is a subbase without changing the topology. Hence $\mathcal{C}(\R^n)$ is a second countable topological space.
Recall that a topology space is second countable if it has a countable base. Also, a function is measurable if the preimage of every open set is a Borel set. Thus we only need to show that $(Z_\zeta^\pm)^{-1} (\mathcal{S}(C,U))$ is a Borel set in $\MP^n$ for every compact set $C \subset \R^n$ and connected open set $U \subset \R$. $U$ can be written as $(t_1,t_2)$, $(-\infty, t)$ and $(t,\infty)$ for $t,t_1,t_2 \in \R$. Also since
\begin{align*}
\mathcal{S}(C,(t_1,t_2)) = \mathcal{S}(C,(t_1,\infty)) \cap \mathcal{S}(C,(-\infty, t_2))
\end{align*}
and
\begin{align*}
\mathcal{S}(C,(-\infty, t))=\bigcup_{i=1}^\infty \mathcal{S}(C,(-\infty, t-1/i]),
\end{align*}
we only need to show that preimages of $\mathcal{S}(C,(-\infty, t])$ and $\mathcal{S}(C,(t,\infty))$ are Borel sets for all $t \in \R$. Let $P_i \in (Z_\zeta^\pm)^{-1} (\mathcal{S}(C,(-\infty,t]))$ such that $P_i \to P \in \MP^n$. For any $x \in C$, we have
\begin{align*}
Z_\zeta^\pm P (x) \leq \liminf\limits_{i \to \infty} Z_\zeta^\pm P_i (x) \leq t.
\end{align*}
Hence $P \in (Z_\zeta^\pm)^{-1} (\mathcal{S}(C,(-\infty,t]))$.
That is to say $(Z_\zeta^\pm)^{-1} (\mathcal{S}(C,(-\infty,t]))$ is closed in $\MP^n$.
Also, for any $P \in (Z_\zeta^\pm)^{-1} (\mathcal{S}(C,(t,\infty)))$, $\min_{x \in C} Z_\zeta^\pm P(x) >t$ since $Z_\zeta^\pm P \in \mathcal {C}(\R^n)$.
The fact that the lower semi-continuity of $Z_\zeta^\pm (\cdot) (x)$ is locally uniform implies that there is a neighborhood of $P$ such that for any $Q$ in this neighborhood, we have $Z_\zeta^\pm Q (C) >t$.
Hence $(Z_\zeta^\pm)^{-1} (\mathcal{S}(C,(t,\infty)))$ is open.
This proves that $Z_\zeta^\pm$ are measurable. Since the minus in $\mathcal{C}(\R^n)$ is continuous, the difference of two measurable function is measurable, which completes the proof of measurability.

(iii) Finally, let $K_i,K \in \MK^n$, $i=1,2 \dots$ such that $K_i \to K$.
We want to show that $Z_\zeta K$ is well defined and $Z_\zeta K_i \to Z_\zeta K $ uniformly on any compact set $C \subset \R^n$ if $\lim_{|t|\to \infty}\zeta(t)/t =0$.

If $h_K (u)> 0$ for all $u \in \Sn$, clearly $Z_\zeta K$ is well defined. The argument that $Z_\zeta K_i(x) \to Z_\zeta K(x)$ uniformly on the compact set $C$ in this case is similar to the part of measurability.

Now assume that there is a $u \in \Sn$ such that $h_K(u)=0$.
If $K \neq \{o\}$, then $\{h_K=0\}$ lies in a hemisphere.
Since $h_K$ and $\zeta$ are continuous and $\lim_{|t|\to \infty}\zeta(t)/t =0$, for any $\e >0$, there exists $c>0$ and $\e>\delta>0$ such that
\begin{align*}
\left|\zeta \left(\frac{x \cdot u}{h_{K}(u)}\right)\right| \leq \max \{\e \frac{|x \cdot u|}{|h_{K}(u)|}, c \}
\end{align*}
whenever $u \in \{|h_K| \leq \delta\} \setminus \{h_K=0\}$.
Also, since $h_{K_i} \to h_{K}$ uniformly on $\Sn$, for sufficiently large $i>0$, we have $\{h_{K_i}=0\} \subset \{|h_K| \leq \delta \}$ and
\begin{align*}
|h_{K_i}(u)| < \e, ~~
\left|\zeta \left(\frac{x \cdot u}{h_{K_i}(u)}\right)\right| \leq \max \{\e \frac{|x \cdot u|}{|h_{K_i}(u)|}, c\}
\end{align*}
whenever $u \in \{|h_K| \leq \delta \} \setminus \{h_{K_i}=0\}$.

Clearly $\int_{\{|h_K| > \delta \}} \zeta \left(\frac{x \cdot u}{h_{K}(u)}\right)dV_K(u)$ exists and is finite. Also
\begin{align*}
&\left|\int_{\{|h_K| \leq \delta \} \setminus \{h_K=0\}} \zeta \left(\frac{x \cdot u}{h_{K}(u)}\right)dV_K(u)\right| \\
& \qquad \leq \frac{1}{n}\int_{\{|h_K| \leq \delta \} \setminus \{h_K=0\}} \left|\zeta \left(\frac{x \cdot u}{h_{K}(u)}\right)\right| |h_{K}(u)| dS_K(u) \\
&\qquad \leq \frac{1}{n}\int_{\{|h_K| \leq \delta \} \setminus \{h_K=0\}} \max \{\e \frac{|x \cdot u|}{|h_{K}(u)|}, c \} |h_{K}(u)| dS_K(u) \\
&\qquad \leq \frac{1}{n} \max \{\e|x|S_K(\Sn), c \delta S_K(\Sn)\} \\
&\qquad \leq \frac{1}{n} \e\max \{|x|S_K(\Sn), c S_K(\Sn)\}
\end{align*}
These show that $Z_\zeta K$ is well defined.
Also, since $S_{K_i} \to S_{K}$ weakly, similarly we have
\begin{align*}
&\left|\int_{\{|h_K| \leq \delta \} \setminus \{h_{K_i}=0\}} \zeta \left(\frac{x \cdot u}{h_{K_i}(u)}\right)h_{K_i}(u) dS_{K_i}(u)\right| \\
&\qquad < \frac{1}{n}\e \max \{|x|S_{K_i}(\Sn), c S_{K_i}(\Sn)\} \\
&\qquad < \e\max \{|x|S_{K}(\Sn), c S_{K}(\Sn)\}
\end{align*}
for sufficiently large $i>0$. Furthermore, we have
\begin{align*}
&\left|\int_{ \{|h_K| > \delta \} } \zeta \left(\frac{x \cdot u}{h_{K_i}(u)}\right)h_{K_i}(u) dS_{K_i}(u) \right. \\
&\qquad \left. - \int_{ \{|h_K| > \delta \} }\zeta \left(\frac{x \cdot u}{h_{K}(u)}\right)h_{K}(u) dS_K(u)\right| \to 0
\end{align*}
uniformly on any compact set. Hence,
\begin{align*}
&\left|\int_{ \{h_{K_i} \neq 0 \}} \zeta \left(\frac{x \cdot u}{h_{K_i}(u)}\right)h_{K_i}(u) dS_{K_i}(u) - \int_{ \{h_K \neq 0 \}} \zeta \left(\frac{x \cdot u}{h_{K}(u)}\right)h_{K}(u) dS_K(u)\right| \\
& \leq \left|\int_{ \{|h_K| > \delta \}} \zeta \left(\frac{x \cdot u}{h_{K_i}(u)}\right)h_{K_i}(u) dS_{K_i}(u) - \int_{  \{|h_K| > \delta \}}\zeta \left(\frac{x \cdot u}{h_{K}(u)}\right)h_{K}(u) dS_K(u)\right| \\
&\qquad + \left|\int_{\{|h_K| \leq \delta \} \setminus \{h_K=0\}} \zeta \left(\frac{x \cdot u}{h_{K}(u)}\right)h_{K}(u) dS_K(u)\right| \\
&\qquad + \left|\int_{\{|h_K| \leq \delta \} \setminus \{h_{K_i}=0\}} \zeta \left(\frac{x \cdot u}{h_{K_i}(u)}\right)h_{K_i}(u) dS_{K_i}(u)\right| \\
& \to 0
\end{align*}
uniformly on any compact set.

If $K=\{o\}$, then $Z_\zeta K(x)=0$ and $\{h_K=0\} = \R^n$. With a similar argument we have
\begin{align*}
|Z_\zeta K_i(x)| \to 0.
\end{align*}
uniformly on any compact set, which completes the proof.
\end{proof}

\begin{cor}\label{co:MK}
Let $\zeta: \R \to \R$ be a continuous function and define a map $\widetilde{Z}_\zeta : \MK^n \to \mathcal{C}(\R^n)$ by
\begin{align*}
\widetilde{Z}_\zeta K (x)= \int_{\Sn \setminus \{h_{[K,o]}=0\}} \zeta \left(\frac{x \cdot u}{h_{[K,o]}(u)}\right)d V_{[K,o]}(u), ~~ x\in \R^n,
\end{align*}
for every $K \in \MK^n$ if the integral exists and is finite for every $x \in \R^n$.
We have the following conclusions: \\
(i) If $\widetilde{Z}_\zeta$ is well defined on $\MK^n$ (or $\MP^n$), then $\widetilde{Z}_\zeta$ is an $\SLn$ contravariant valuation on $\MK^n$ (or $\MP^n$). \\
(ii) $\widetilde{Z}_\zeta $ is well defined and measurable on $\MP^n$ without any restriction on $\zeta$. \\
(iii) If $\lim_{|t|\to \infty}\zeta(t)/t =0$, then $\widetilde{Z}_\zeta$ is well defined and continuous on $\MK^n$.
\end{cor}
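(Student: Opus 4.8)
The plan is to deduce all three statements from Theorem \ref{thm:val} by exploiting the identity $\widetilde{Z}_\zeta K = Z_\zeta[K,o]$, i.e.\ $\widetilde{Z}_\zeta = Z_\zeta \circ \Lambda$, where $\Lambda \colon \MK^n \to \MK^n$ is $\Lambda K = [K,o]$ and $\Lambda$ maps $\MP^n$ into $\MP^n$. I would first record two elementary properties of $\Lambda$: it is $1$-Lipschitz with respect to the Hausdorff metric (both the convex-hull operation and $K \mapsto K \cup \{o\}$ are $1$-Lipschitz), hence continuous on $\MK^n$ and on $\MP^n$; and it commutes with $\SLn$, since every $\phi \in \SLn$ is linear and fixes the origin, so $\phi[K,o] = [\phi K, o]$.

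The geometric core is the claim that, whenever $K, L \in \MK^n$ with $K \cup L \in \MK^n$,
\begin{align*}
[K,o] \cup [L,o] = [K \cup L, o] \qquad\text{and}\qquad [K,o] \cap [L,o] = [K \cap L, o] .
\end{align*}
Writing $[A,o] = \{\lambda a : a \in A,\ 0 \le \lambda \le 1\}$ for convex $A$, the first identity is immediate and shows in particular that $[K,o] \cup [L,o]$ is a convex body; the inclusion $\supseteq$ in the second is trivial. For $\subseteq$, let $z = t u$ with $u \in \Sn$, $t > 0$, lie in $[K,o] \cap [L,o]$. Intersecting the ray $\{s u : s \ge 0\}$ with $K$, $L$ and $K \cup L$ produces three subintervals; membership of $z$ in $[K,o]$ and in $[L,o]$ pushes the right endpoints of the $K$- and $L$-intervals to values $\ge t$, and convexity of $K \cup L$ forces these two intervals to overlap. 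Hence $K \cap L$ meets the ray at a point $s^\ast u$ with $s^\ast \ge t$, so $z \in [K \cap L, o]$. Granting the claim, I apply the valuation property of $Z_\zeta$ from Theorem \ref{thm:val}(i) to the four convex bodies $[K,o]$, $[L,o]$, $[K,o] \cup [L,o] = [K \cup L, o]$ and $[K,o] \cap [L,o] = [K \cap L, o]$ to get
\begin{align*}
\widetilde{Z}_\zeta K + \widetilde{Z}_\zeta L
&= Z_\zeta[K,o] + Z_\zeta[L,o] = Z_\zeta[K \cup L, o] + Z_\zeta[K \cap L, o] \\
&= \widetilde{Z}_\zeta(K \cup L) + \widetilde{Z}_\zeta(K \cap L) ,
\end{align*}
so $\widetilde{Z}_\zeta$ is a valuation on $\MK^n$, and likewise on $\MP^n$ since $\Lambda$ preserves polytopes.

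The rest is mechanical transfer through $\Lambda$. For (i), $\SLn$ contravariance of $Z_\zeta$ and $\phi[K,o] = [\phi K, o]$ give $\widetilde{Z}_\zeta(\phi K)(x) = Z_\zeta[K,o](\phi^{-1} x) = \widetilde{Z}_\zeta K(\phi^{-1}x)$, and the valuation property was just shown. For (ii), $[P,o] \in \MP^n$, so $\widetilde{Z}_\zeta P = Z_\zeta[P,o]$ is well defined by Theorem \ref{thm:val}(ii), and $\widetilde{Z}_\zeta = Z_\zeta \circ \Lambda$ is measurable as the composition of a continuous map with a measurable one. For (iii), Theorem \ref{thm:val}(iii) makes $Z_\zeta$ well defined and continuous on $\MK^n$ when $\lim_{|t| \to \infty} \zeta(t)/t = 0$, and precomposition with the continuous $\Lambda$ preserves this. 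I expect the only genuine obstacle to be the second (intersection) identity in the claim: its proof really does use that $K \cup L$ is convex --- the identity fails in general --- whereas the other identity, the Lipschitz and equivariance properties of $\Lambda$, and the three conclusions themselves are all routine once that lemma is in hand.
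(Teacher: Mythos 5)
Your proposal is correct and follows essentially the same route as the paper: reduce everything to Theorem \ref{thm:val} via the identity $\widetilde{Z}_\zeta K = Z_\zeta[K,o]$, using that $K \mapsto [K,o]$ commutes with $\SLn$, is $1$-Lipschitz, and satisfies $[K\cup L,o]=[K,o]\cup[L,o]$ and $[K\cap L,o]=[K,o]\cap[L,o]$ (the paper states these identities without proof; your ray argument for the intersection identity, which genuinely uses convexity of $K\cup L$, is a correct filling-in). The only divergence is minor: for (ii) you invoke the standard fact that a Borel measurable map precomposed with a continuous map is Borel measurable, whereas the paper instead re-runs its explicit argument that preimages of the subbasic sets $\mathcal{S}(C,(-\infty,t])$ and $\mathcal{S}(C,(t,\infty))$ are respectively closed and open; both are valid, and yours is the cleaner justification.
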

\begin{proof}
Clearly $\widetilde{Z}_\zeta K = Z[K,o]$ for $Z$ defined in Theorem \ref{thm:val}. The $\SLn$ contravariance and valuation property follows from Theorem \ref{thm:val} and the fact that $[\phi K,o] = \phi[K,o]$
\begin{align*}
[K\cup L,o] = [K,o] \cup [L,o], ~~[K\cap L,o] = [K,o] \cap [L,o].
\end{align*}
when $K,L,K \cup L \in \MK^n$ and $\phi \in \SLn$.
Other statements also follow from Theorem \ref{thm:val} and the map $K \to [K,o]$ is continuous.
\end{proof}

The following examples are critical for lower dimensional convex bodies.
\begin{ex}\label{ex2}
The maps mapping $K \in \MK^n$ to $V_0(K)$, $V_0([K,o])$, $\mathbbm{1}_{K}(o)$ or $(-1)^{\dim K} \mathbbm{1}_{\relint K}(o)$
are function valued valuations taking values in constant functions. Moreover, they are $\SLn$ invariant and contravariant. $V_0(K)$ and $V_0([K,o])$ are continuous, while $\mathbbm{1}_{K}(o)$ and $(-1)^{\dim K} \mathbbm{1}_{\relint K}(o)$ are measurable but not continuous.
\end{ex}

\begin{ex}\label{ex3}
The maps mapping $K \in \MK^n$ to $h_{\Pi K}(x) = V_1(K,[-x,x])$ or $h_{\Pi [K,o]}(x) = V_1([K,o],[-x,x])$ are continuous, $\SLn$ contravariant function valued valuations.
Note that,
\begin{align}\label{pi1}
V_1(sT^{n-1},[-x,x]) = \frac{2s^{n-1}|x_n|}{n!} , ~~x \in \R^n
\end{align}
for $T^{n-1} = [o,e_1,\dots,e_{n-1}]$, where $x_n$ is the $n$-th coordinate of $x$.
\end{ex}

One direction of Theorems \ref{thm:MKo}-\ref{thm:MP} and Corollaries \ref{p>=1}-\ref{trans2} follows directly from Theorem \ref{thm:val}, Corollary \ref{co:MK}, Example \ref{ex2} and Example \ref{ex3}. In the following, we only need to prove the other direction that valuations satisfying all conditions have the corresponding representations.

A function valued valuation $Z$ on $\MP^n$ is fully additive, namely,
\begin{align*}
Z (P_1 \cup \dots \cup P_m) = \sum_{j=1}^m \sum_{1 \leq i_1 < \dots <i_j \leq m}(-1)^{j-1} Z(P_{i_1} \cap \dots \cap P_{i_j}).
\end{align*}
Indeed, since $P \mapsto Z P (x)$ is a real valued valuation for any $x \in \R^n$, this is a direct corollary of the fact that real valued valuations on $\MP^n$ are fully additive \cite{Sch14}.

Set $T^d = [o,e_1,\dots,e_{d}]$ for $0 \leq d \leq n$.
If a valuation $Z$ is $\SLn$ contravariant, then $Z$ of every simplex containing the origin as one of their vertices is determined by $ZT^d$ for some $d$.
Also, $Z$ of every simplex contained in a hyperplane not going through the origin is determined by $Z[e_1,\dots,e_{d}]$ for some $d$.
If $P$ is a polytope containing the origin, we can use full additivity to calculate the valuation of $P$ by dissecting $P$ into simplices containing the origin as one of their vertices.
For $o \notin P$, we can dissect $[P,o]$ into $P$ and polytopes $[F_i,o]$, where $F_i$ are facets of $P$ visible from the origin.
Since $Z$ of $[P,o]$, $[F_i,o]$ and their intersections are determined by simplices, one can get the following uniqueness of valuations.
Details can be seen (for example) in \cite{LR2017sl}.

\begin{lem}\label{lemuq}
Let $Z$ and $Z'$ be $\SLn$ contravariant function valued valuations on $\MPon$. If $Z (sT^d) = Z' (sT^d)$ for every $s>0$ and $0 \leq d \leq n$, then $Z P = Z' P$ for every $P \in \MPon$.
\end{lem}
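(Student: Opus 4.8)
The plan is to exploit full additivity to reduce any polytope in $\MPon$ to simplices of the form $sT^d$, up to $\SLn$ images, and then invoke $\SLn$ contravariance to conclude. Since $Z$ and $Z'$ are both $\SLn$ contravariant function valued valuations, so is their difference $Z - Z'$ (valuations form a group under pointwise addition/subtraction, and contravariance is preserved). Hence it suffices to prove the following: if $W$ is an $\SLn$ contravariant function valued valuation on $\MPon$ with $W(sT^d) = 0$ for all $s > 0$ and $0 \le d \le n$, then $W \equiv 0$ on $\MPon$.

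First I would observe that by full additivity a general $P \in \MPon$ can be decomposed into finitely many simplices with disjoint interiors, all containing the origin, by taking a triangulation of $P$ using the origin as a common apex (if $o \in \relint P$) or more generally subdividing $P$ and then coning to $o$; each lower-dimensional intersection appearing in the inclusion-exclusion formula is again a polytope containing $o$, so it lies in $\MPon$. Thus, using the fully additive expansion
$$
W(P_1 \cup \dots \cup P_m) = \sum_{1 \le i_1 < \dots < i_j \le m} (-1)^{j-1} W(P_{i_1} \cap \dots \cap P_{i_j}),
$$
the value $W(P)$ is determined once we know $W$ on all simplices $S \in \MPon$ (of every dimension $0 \le d \le n$). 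Next, for a $d$-dimensional simplex $S$ containing $o$, I would split into two cases: if $o$ is a vertex of $S$, then $S = \phi(sT^d)$ for some $s>0$ and some $\phi \in \SLn$ (after possibly an additional volume-preserving adjustment — one maps the $d$ edges emanating from $o$ to $s\,e_1, \dots, s\,e_d$ and extends to an element of $\SLn$, which is possible since $n \ge d$, and the single scale factor $s$ can absorb the determinant constraint because the relevant linear map acts only on a $d$-dimensional subspace for $d<n$, while for $d=n$ one checks the determinant is already $1$ after rescaling); then $W(S)(x) = W(sT^d)(\phi^{-1}x) = 0$ by hypothesis. If $o$ is not a vertex of $S$, I would subdivide $S$ by coning from $o$ to the facets of $S$ not containing $o$, writing $S$ as a union of simplices each having $o$ as a vertex and with all pairwise intersections again simplices having $o$ as a vertex; full additivity then forces $W(S) = 0$.

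The main obstacle I anticipate is the case analysis for realizing an arbitrary simplex with a vertex at $o$ as an $\SLn$-image of $sT^d$, and more delicately, handling simplices where $o$ is an interior or boundary point that is not a vertex: one must ensure the cone subdivision from $o$ stays inside $\MPon$ and that the combinatorics of inclusion-exclusion really do collapse $W(S)$ to a combination of $W$-values on $o$-vertex simplices. A clean way to organize this is to induct on $\dim S$ and on the number of vertices/facets, peeling off one facet at a time. Once every simplex in $\MPon$ is handled, full additivity upgrades this to all of $\MPon$, giving $W \equiv 0$ and hence $Z = Z'$ on $\MPon$.
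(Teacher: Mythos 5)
Your proposal is correct and follows exactly the route the paper intends: the paper presents this lemma as a direct consequence of the full additivity of function valued valuations on polytopes (stated just before the lemma) combined with $\SLn$ contravariance, i.e., triangulate $P$ into simplices with a vertex at $o$, note that every such $d$-simplex is an $\SLn$ image of some $sT^d$, and apply inclusion-exclusion. The only cosmetic gloss is the sign of the determinant in the $d=n$ case, which is harmless since $sT^n$ is invariant under coordinate permutations.
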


\begin{lem}\label{lemuq2}
Let $Z$ and $Z'$ be $\SLn$ contravariant function valued valuations on $\MP^n$. If $Z (sT^d) = Z' (sT^d)$ and $Z(s[e_1,\dots,e_d]) = Z'(s[e_1,\dots,e_d])$ for every $s>0$ and $0 \leq d \leq n$, then $Z P = Z' P$ for every $P \in \MP^n$.
\end{lem}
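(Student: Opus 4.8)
The plan is to work with the difference $Y:=Z-Z'$, reduce to simplices by full additivity, normalise each simplex using $\SLn$ contravariance, and treat the remaining degenerate position of the origin by coning from $o$ together with Lemma~\ref{lemuq}. Since $\mathcal{C}(\R^n)$ is an abelian group under pointwise addition, $Y$ is again an $\SLn$ contravariant function valued valuation on $\MP^n$, and by hypothesis $Y(sT^d)=Y(s[e_1,\dots,e_d])=0$ for all $s>0$ and $0\le d\le n$; the goal is $Y\equiv 0$. Applying Lemma~\ref{lemuq} to $Y|_{\MPon}$ and the zero valuation shows at once that $YP=0$ for every polytope $P$ containing the origin. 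Since $Y$ is fully additive on $\MP^n$ (as recalled before Lemma~\ref{lemuq}) and every polytope can be dissected into finitely many simplices whose mutual intersections are again simplices, $Y$ is determined by its values on simplices, so it suffices to prove $YS=0$ for every simplex $S\subset\R^n$; I would do this by induction on $d=\dim S$.

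Fix a $d$-simplex $S=[v_0,\dots,v_d]$. If $o\in S$ then $S\in\MPon$ and $YS=0$ by the previous step (this also covers $d=0$ with $v_0=o$). If $o\notin\mathrm{aff}\,S$, then a nontrivial linear relation among $v_0,\dots,v_d$ would force $o\in\mathrm{aff}\,S$, so these $d+1$ vectors are linearly independent; hence $d+1\le n$, and a routine choice of $\phi\in\SLn$ and $s>0$ yields $\phi(s[e_1,\dots,e_{d+1}])=S$, so $YS=0$ by $\SLn$ contravariance (covering $d=0$ with $v_0\ne o$). In the remaining case $o\in\mathrm{aff}\,S$ but $o\notin S$; then $[S,o]$ is a $d$-dimensional polytope lying in $\mathrm{aff}\,S$ and containing $o$, so $[S,o]\in\MPon$ and $Y[S,o]=0$. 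Writing $F_1,\dots,F_k$ for the facets of $S$ that are visible from $o$, one has the polytopal dissection $[S,o]=S\cup[o,F_1]\cup\dots\cup[o,F_k]$ whose members meet in common faces, $S\cap[o,F_i]=F_i$ and $[o,F_i]\cap[o,F_j]=[o,F_i\cap F_j]$, so full additivity gives
\begin{align*}
0=Y[S,o]=YS+\sum_{i=1}^k Y([o,F_i])-\sum_{i=1}^k Y(F_i)-\sum_{1\le i<j\le k}Y([o,F_i\cap F_j])+\cdots,
\end{align*}
and every term on the right except $YS$ vanishes: each of $[o,F_i],[o,F_i\cap F_j],\dots$ contains $o$ (so $Y$ kills it by the first step), while each of $F_i,F_i\cap F_j,\dots$ is a face of $S$ of dimension $<d$ (so $Y$ kills it by the induction hypothesis). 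Hence $YS=0$, the induction closes, and $Y\equiv 0$ on $\MP^n$.

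One direction of the lemma is routine from full additivity, and the $\SLn$ normalisations above are elementary linear algebra. The step I expect to require the most care is the last case: one has to verify that $[S,o]=S\cup[o,F_1]\cup\dots\cup[o,F_k]$ really is a dissection into convex polytopes meeting in common faces before invoking the inclusion--exclusion of full additivity, and the whole argument hinges on Lemma~\ref{lemuq} to make $Y$ vanish on all origin-containing pieces simultaneously. The generators $s[e_1,\dots,e_d]$ are indispensable precisely in the case $o\notin\mathrm{aff}\,S$, since those simplices are invisible to Lemma~\ref{lemuq}; without them the statement would fail.
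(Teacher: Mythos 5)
Your proof is correct. Note that the paper does not prove Lemma~\ref{lemuq2} at all — it simply cites \cite{LR2017sl} — so there is no internal proof to compare against; your argument (reduce to simplices by full additivity, kill origin-containing polytopes via Lemma~\ref{lemuq}, map a simplex whose affine hull misses $o$ onto a generator $s[e_1,\dots,e_{d+1}]$ by an $\SLn$ map, and treat the case $o\in\operatorname{aff}S\setminus S$ by coning $S$ from $o$ over the visible facets) is essentially the standard argument from that reference. The two points worth making explicit are exactly the ones you flag: that the pieces $S,[o,F_1],\dots,[o,F_k]$ meet in common faces — which holds because any point of a visible facet lying on a ray emanating from $o$ must be the first contact point of that ray with $S$, so distinct cones can only share points of $[o,F_i\cap F_j]$ — and that empty intersections in the inclusion--exclusion contribute nothing under the usual convention $Y(\emptyset)=0$ built into full additivity; in the case $d+1=n$ one may also need to permute the vertices of $S$ to make the relevant determinant positive before normalising to an $\SLn$ map, but this is harmless since a simplex is unaffected by reordering its vertices.
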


\section{Proof of the main results}\label{sec:proofthm}
In this section, we will always assume that $n \geq 3$.
\begin{lem}\label{lem3.1}
If $Z : \mathcal{P}_o ^n \to \mathcal{C}(\mathbb{R}^n)$ is $\SLn$ contravariant, then
$$Z P (x)= Z P (o),~x \in \R^n,$$
for every $P \in \mathcal{P}_o ^n$ satisfying $\dim P \leq n-2$, and
$$Z P (x) = Z P (x_n e_n),~x \in \R^n$$
for every $P \in \mathcal{P}_o ^n$ satisfying that $\dim P = n-1$ and $P \subset e_n^{\bot}$.
\end{lem}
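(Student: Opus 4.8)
The plan is to exploit the fact that $\SLn$ contravariance pins down how $ZP(x)$ transforms under linear maps fixing $P$ (or mapping $P$ to a congruent copy), and then use the abundance of such maps when $P$ is low-dimensional. Concretely, suppose first that $\dim P \leq n-2$. Then the linear span of $P$ has dimension at most $n-2$, so for any fixed $x \in \R^n$ I would like to produce $\phi \in \SLn$ with $\phi P = P$ (or at least $\phi P$ equal to some polytope whose $Z$-value I can compare back) but $\phi^{-1} x \ne x$; the contravariance identity $ZP(x) = Z(\phi P)(x) = ZP(\phi^{-1}x)$ then forces $ZP$ to be constant along the orbit of $x$. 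The key point is that the group $G_P = \{\phi \in \SLn : \phi|_{\lin P} = \mathrm{id}\}$ acts on the $(\geq 2)$-dimensional complement of $\lin P$ as (roughly) all of $\SL{n - \dim P}$ together with shears, and this action is transitive on $\R^n \setminus \lin P$ modulo the translates — more precisely, the orbit of any $x$ under $G_P$, together with the trivial action on $\lin P$, sweeps out enough of $\R^n$ that continuity of $ZP$ forces $ZP$ to be constant, equal to $ZP(o)$. I would first treat $P$ inside a coordinate subspace (say $\lin P \subseteq \{x_{d+1} = \dots = x_n = 0\}$ with $d \leq n-2$) by writing down explicit elementary matrices — diagonal matrices $\mathrm{diag}(1,\dots,1,\lambda,\lambda^{-1},1,\dots,1)$ acting on the last coordinates and shear matrices $x \mapsto x + t\,(e_i \cdot x) e_j$ for $i \le d < j$ — check each lies in $G_P$, and read off that $ZP$ is invariant under the corresponding substitutions in $x$; combining these gives constancy. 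The general low-dimensional $P$ reduces to this case by applying some $\psi \in \SLn$ carrying $\lin P$ into a coordinate subspace and using contravariance once more.

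For the second assertion, $\dim P = n-1$ with $P \subseteq e_n^\perp$, the complement of $\lin P$ is only $1$-dimensional, so I cannot hope for full constancy; instead I want $ZP(x)$ to depend only on the $e_n$-component $x_n$ of $x$. Here the relevant subgroup is $G_P = \{\phi \in \SLn : \phi|_{e_n^\perp} = \mathrm{id}\}$, which consists exactly of the shears $\phi_v : x \mapsto x + (e_n \cdot x)\, v$ with $v \in e_n^\perp$ (these have determinant $1$ and fix $e_n^\perp$ pointwise). For such $\phi_v$ one computes $\phi_v^{-1} x = x - (e_n \cdot x) v = x - x_n v$, whose $e_n$-component is still $x_n$; and as $v$ ranges over $e_n^\perp$, the point $x - x_n v$ ranges (for $x_n \ne 0$) over the entire affine hyperplane $\{y : y_n = x_n\}$. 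Thus $ZP(\phi_v P)(x) = ZP(x)$ forces $ZP$ to be constant on each such hyperplane, i.e. $ZP(x) = ZP(x_n e_n)$ for $x_n \ne 0$, and the case $x_n = 0$ follows by continuity of $ZP$ (letting $x_n \to 0$) — or, noting $x \in e_n^\perp \Rightarrow ZP(x) = ZP(0) = ZP(0 \cdot e_n)$ directly from the $x_n\ne 0$ case in the limit.

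I expect the main obstacle to be purely bookkeeping rather than conceptual: verifying carefully that the exhibited matrices genuinely lie in $\SLn$ and fix $P$ (or $\lin P$) pointwise, and — in the $\dim P \le n-2$ case — assembling the one-parameter orbits into a genuine proof that $ZP$ is \emph{globally} constant and not merely constant along individual lines. This last point is where continuity of $Z$ is essential: one shows the orbit closure of any $x$ under $G_P$ contains $o$ (for instance, scaling the complementary coordinates to $0$ via a limit of determinant-one maps is not possible directly, so one instead connects $x$ to points of $\lin P$ through a chain of shear-orbits and then uses that $\lin P$ itself is fixed, finally invoking continuity at the limiting step). A clean way to organize this is: (1) reduce to $\lin P$ a coordinate subspace; (2) use shears $e_i \mapsto e_i + t e_n$ type maps to move $x$ to have vanishing last coordinate, then vanishing $(n-1)$st, etc., but these shears must be chosen inside $G_P$; (3) once $\phi^{-1}x \in \lin P$, use a further $\phi' \in G_P$ that is genuinely $\SL{n-d}$ on the complement to rotate within — wait, $\phi^{-1}x$ already landed in $\lin P$; then use maps in $G_P$ acting as $\SL{n-d}$ on the complement together with shears to reach $o$. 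I would write this out as a short sequence of explicit substitutions, then cite continuity of $ZP$ only if a genuine limit is needed.
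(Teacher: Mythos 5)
Your approach is essentially the paper's: the paper takes $\phi=\left[\begin{smallmatrix} I & A\\ 0 & B\end{smallmatrix}\right]\in\SLn$ with $B\in\SL{n-d}$, which is exactly your stabilizer $G_P$ of $\lin P=\lin\{e_1,\dots,e_d\}$, computes $\phi^{-1}x=(x'-AB^{-1}x'',\,B^{-1}x'')$, observes that for $d\le n-2$ the orbit of any $x$ with $x''\neq 0$ is the dense set $\{y''\neq 0\}$, and concludes by continuity of $ZP$; the $d=n-1$ case is your shear computation verbatim. Two slips to fix when writing it up: your shears $x\mapsto x+t(e_i\cdot x)e_j$ with $i\le d<j$ do \emph{not} fix $\lin P$ pointwise (the indices must be reversed, so that complementary coordinates feed into $\lin P$, i.e.\ the $A$-block, not the lower-left block), and your closing ``clean organization'' is not viable as stated --- no element of $G_P$ can move a point with $x''\neq 0$ into $\lin P$, since $x''$ transforms by $B^{-1}\in\SL{n-d}$ and stays nonzero; the correct mechanism is the one in your first paragraph, density of the orbit plus continuity of $ZP$.
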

\begin{proof}
Let $P \in \mathcal{P}_o ^n$ and $\dim P =d <n$. We can assume that the linear hull of $P$ is $ \lin \{ e_1, \dots, e_d \}$, the linear hull of $ \{ e_1, \dots, e_d \}$. Let $\phi := \left[ {\begin{array}{*{20}{c}}
I&A \\
0&B
\end{array}} \right] \in \SLn$, where $I \in \mathbb{R}^{d \times d}$ is the identity matrix, $A \in \mathbb{R}^{d \times (n-d)}$ is an arbitrary matrix, $B \in \SL{n-d}$, $0 \in \mathbb{R}^{(n-d)\times d}$ is the zero matrix. Also, let $x = \left( {\begin{array}{*{20}{c}}
{x'}\\
{x''}
\end{array}} \right) \in \mathbb{R}^{d \times (n-d)}$ and $x'' \neq 0$. Thus $\phi P = P$. By the $\SLn$ contravariance of $Z$, we have
\begin{align*}
Z P (x) = Z (\phi P) (x) = Z P (\phi ^{-1} x) = Z P \left( {\begin{array}{*{20}{c}}
{x' - AB^{-1}x''}\\
{B^{-1}x''}
\end{array}} \right).
\end{align*}

For $d \leq n-2$, we can choose a suitable matrix $B$ such that $B^{-1}x''$ is any nonzero vector on $\lin \{ e_{d+1},\dots,e_n \}$. After fixing $B$ we can also choose a suitable matrix $A$ such that $x' - AB^{-1}x''$ is any vector in $\lin \{ e_1, \dots, e_d \}$. So $Z P (x)$ is a constant function on a dense set of $\mathbb{R}^n$. By the continuity of $Z P$, we get $Z P (x) = Z P (o)$.

For $d = n-1$, we have $B=1$ and $x''=x_ne_n$. If $x_n \neq 0$, we can choose a suitable $A$ such that $x' - AB^{-1}x''=0$. Hence $$Z P (x) = Z P (x_n e_n)$$
for $x_n \neq 0$.
Now for $x_n=0$, the continuity of $Z P$ shows that
$$Z P \left( {\begin{array}{*{20}{c}}{x'}\\{0}\end{array}} \right) = \lim_{x_n \to 0}Z P \left( {\begin{array}{*{20}{c}}{x'}\\{x_n}\end{array}} \right) = \lim_{x_n \to 0}Z P (x_n e_n) = Z P(0 ~e_n).$$
\end{proof}

\begin{lem}\label{lem3.2}
If $Z : \mathcal{P}_o ^n \to \mathcal{C}(\mathbb{R}^n)$ is an $\SLn$ contravariant valuation satisfying $Z \{o\} (o) =0$ and $Z [o,e_1] (o)=0$,
then there exists a constant $c_{n-1} \in \R$ such that
\begin{align}\label{ctlowd}
Z P (x) = c_{n-1} V_1(P,[-x,x]),  ~~x \in \R^n
\end{align}
for every $P \in \mathcal{P}_o ^n$ satisfying $\dim P \leq n-1$.
\end{lem}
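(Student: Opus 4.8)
The plan is to reduce the whole statement to computing $Z$ on the standard simplices $T^d=[o,e_1,\dots,e_d]$, $0\le d\le n-1$, and then to pin down those few values. Two reductions make this possible. First, by Lemma \ref{lem3.1}, $ZP$ is the constant function $ZP(o)$ whenever $\dim P\le n-2$, and $ZT^{n-1}(x)=ZT^{n-1}(x_ne_n)$. Second, any $d$-simplex having $o$ as a vertex equals $\phi T^d$ for some $\phi\in\SLn$ when $d\le n-1$ (extend $d$ linearly independent vectors to a basis of determinant one, using that there is at least one free coordinate), so $\SLn$ contravariance determines $Z$ on all such simplices from its value on $T^d$; and pulling an arbitrary $P\in\MPon$ with $\dim P\le n-1$ from $o$ into a triangulation all of whose simplices have $o$ as a vertex, together with the full additivity of the real-valued valuation $Z(\cdot)(x)$, expresses $ZP$ by inclusion--exclusion in terms of $Z$ on such simplices (all the intersections again contain $o$, hence lie in $\MPon$). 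This is the mechanism of Lemma \ref{lemuq}, restricted to $\dim\le n-1$. Consequently it suffices to match $Z$ on $T^0,\dots,T^{n-1}$ with $Z'P(x):=c_{n-1}V_1(P,[-x,x])$, an $\SLn$ contravariant valuation by Example \ref{ex3}, for a suitable constant $c_{n-1}$.

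Write $a_d:=ZT^d(o)$; by hypothesis $a_0=a_1=0$. For $2\le d\le n-2$, cut $T^d$ by the hyperplane $\{x_1=x_2\}$: it passes through $o,e_3,\dots,e_d$ and separates $e_1$ from $e_2$, so it splits $T^d$ into the $d$-simplices $[o,e_1,m,e_3,\dots,e_d]$ and $[o,e_2,m,e_3,\dots,e_d]$ with $m=\tfrac12(e_1+e_2)$, whose intersection is the $(d-1)$-simplex $[o,m,e_3,\dots,e_d]$; all three have $o$ as a vertex, hence are $\SLn$-images of $T^d$, $T^d$ and $T^{d-1}$, and since all dimensions are $\le n-2$ the relevant $Z$-values are the constants $a_d,a_d,a_{d-1}$. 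The valuation identity now reads $a_d+a_d=a_d+a_{d-1}$, i.e.\ $a_d=a_{d-1}$; with $a_1=0$ this gives $a_d=0$ for $0\le d\le n-2$, and there $Z'T^d\equiv 0$ as well since $V_1(T^d,[-x,x])=0$.

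For $d=n-1$ set $g(t):=ZT^{n-1}(te_n)$, a continuous function. Cut $T^{n-1}$ by $\{(1-\alpha)x_1=\alpha x_2\}$, which passes through $o,e_3,\dots,e_{n-1}$ and $p_\alpha=\alpha e_1+(1-\alpha)e_2$; the two pieces $[o,e_1,p_\alpha,e_3,\dots,e_{n-1}]$, $[o,e_2,p_\alpha,e_3,\dots,e_{n-1}]$ and their intersection $[o,p_\alpha,e_3,\dots,e_{n-1}]$ all have $o$ as a vertex and all lie in $e_n^\perp$. Writing the two pieces as $\theta_\alpha T^{n-1}$ (fixing $e_1,e_3,\dots,e_{n-1}$, sending $e_2\mapsto p_\alpha$) and $\theta'_\alpha T^{n-1}$ (fixing $e_2,e_3,\dots,e_{n-1}$, sending $e_1\mapsto p_\alpha$), one computes $\theta_\alpha^{-1}e_n=(1-\alpha)e_n$ and $(\theta'_\alpha)^{-1}e_n=\alpha e_n$, while the intersection is an $\SLn$-image of $T^{n-2}$, on which $Z$ is the constant $a_{n-2}=0$. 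By $\SLn$ contravariance and Lemma \ref{lem3.1} the valuation identity becomes
\[
 g(t)=g((1-\alpha)t)+g(\alpha t)\qquad(t\in\R,\ \alpha\in(0,1)).
\]
Taking $t=0$ gives $g(0)=0$, and on $(0,\infty)$ and on $(-\infty,0)$ this is Cauchy's equation, so by continuity $g$ is linear on each half-line. Finally a reflection $\rho=\mathrm{diag}(1,\dots,1,-1,-1)\in\SLn$ gives $Z(\rho T^{n-1})(te_n)=g(-t)$, while $\rho T^{n-1}=[o,e_1,\dots,e_{n-2},-e_{n-1}]$ is also $\sigma T^{n-1}$ for some $\sigma\in\SLn$ with $\sigma e_n=e_n$ (adjust the determinant sign by a transposition of two of $e_1,\dots,e_{n-1}$; a small variant handles $n=3$), so $Z(\rho T^{n-1})(te_n)=g(t)$, whence $g$ is even and $g(t)=c|t|$ with $c=g(1)$. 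Since \eqref{pi1} gives $V_1(T^{n-1},[-x,x])=\tfrac{2}{n!}|x_n|$, the choice $c_{n-1}=\tfrac{n!}{2}c$ makes $ZT^{n-1}=Z'T^{n-1}$.

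Thus $ZT^d=Z'T^d$ for $0\le d\le n-1$; by $\SLn$ contravariance $Z=Z'$ on every simplex with $o$ as a vertex of dimension $\le n-1$, and by the pulling / full-additivity reduction $ZP=Z'P=c_{n-1}V_1(P,[-x,x])$ for every $P\in\MPon$ with $\dim P\le n-1$. I expect the genuinely delicate point to be the $(n-1)$-dimensional step: reading Cauchy's equation off a single hyperplane cut and keeping track of the $\SLn$ matrices that convert the pieces into copies of $T^{n-1}$, and then upgrading ``linear on each half-line'' to the symmetric form $c|t|$ by choosing the $\SLn$ reflection so that it fixes the $e_n$-axis (which forces the case distinction at $n=3$); the lower-dimensional recursion and the final assembly via the reduction are routine.
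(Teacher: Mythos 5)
Your proof is correct and follows essentially the same route as the paper: the same hyperplane cut of $T^d$ yielding the recursion $ZT^d(o)=ZT^{d-1}(o)$ for $d\le n-2$ and the Cauchy equation $g(t)=g(\alpha t)+g((1-\alpha)t)$ for $d=n-1$, followed by the evenness of $g$ from $\SLn$ contravariance, comparison with \eqref{pi1}, and assembly via the full-additivity reduction of Lemma \ref{lemuq}. The only differences are presentational: you spell out the reflection argument for $g(t)=g(-t)$ and run the low-dimensional recursion with constant functions and a fixed $\lambda=\tfrac12$ cut, where the paper sets $t=0$ in its identity \eqref{a7-1}.
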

\begin{proof}
For $0 < \lambda <1$, let $H_\lambda = \{x \in\mathbb{R}^n : x \cdot ((1-\lambda) e_1- \lambda e_2) = 0\}$, $H_\lambda^- := \{ x \in\mathbb{R}^n : x \cdot ((1-\lambda) e_1- \lambda e_2) \leq 0 \}$ and $H_\lambda^+ := \{ x \in\mathbb{R}^n : x \cdot ((1-\lambda) e_1- \lambda e_2) \geq 0 \}$.
Since $Z$ is a valuation,
\begin{align}\label{val}
Z (sT^{d}) (x) + Z (sT^{d} \cap H_\lambda) (x) = Z (sT^{d} \cap H_\lambda ^-) (x) + Z (sT^{d} \cap H_\lambda ^-) (x), ~~x \in \R^n
\end{align}
for $2 \leq d \leq n$, $s > 0$.
Let $\widehat{T}^{d-1} = [o,e_1,e_3,\dots,e_d]$ and $\phi_1,\phi_2 \in \SLn$ such that
\begin{align*}
\phi _1 e_1 = &\lambda e_1 + (1-\lambda) e_2,~\phi _1 e_2 = e_2,~\phi _1 e_n = \frac{1}{\lambda} e_n,\\
&\phi _1 e_i = e_i,~~~\text{for}~3 \leq i \leq n-1
\end{align*}
and
\begin{align*}
\phi _2 e_1 = e_1,&~\phi _2 e_2 = \lambda e_1 + (1-\lambda) e_2,~\phi _2 e_n = \frac{1}{1-\lambda} e_n,\\
&\phi _2 e_i = e_i,~~~\text{for}~3 \leq i \leq n-1.
\end{align*}
For $2 \leq d \leq n-1$,
we have $T^{d}\cap H_\lambda ^- = \phi _1 T^{d}$, $T^{d}\cap H_\lambda ^+ = \phi _2 T^{d}$ and $T^d \cap H_\lambda =\phi _1 \widehat{T}^{d-1}$.
Also, since $Z$ is $\SLn$ contravariant, \eqref{val} implies that
\begin{align}\label{a7-1}
Z (sT^{d}) (te_n) + Z (s\widehat{T}^{d-1}) (\lambda te_n) = Z (sT^{d}) (\lambda te_n) + Z (sT^{d}) ((1-\lambda)te_n)
\end{align}
for $t \in \mathbb{R}$.
With $t=0$ in (\ref{a7-1}), we have $Z (sT^{d}) (o) = Z (s\widehat{T}^{d-1}) (o)$ for $d \leq n-1$. Using the $\SLn$ contravariance of $Z$ again, we have
$$Z (sT^{d}) (o) = Z (sT^{d-1}) (o)= \dots = Z (s[o,e_1]) (o) = Z [o,e_1] (o).$$
Combined with Lemma \ref{lem3.1} and the assumption $Z [o,e_1](o) =0$, we have
\begin{align}\label{1}
Z (sT^{d}) \equiv 0
\end{align}
for $s >0$ and $d \leq n-2$.

For $d = n-1$, the relations (\ref{a7-1}), \eqref{1} and the $\SLn$ contravariance of $Z$ show that
\begin{align}\label{a7-2}
Z T^{n-1} (te_n) = Z T^{n-1} (\lambda te_n) + Z T^{n-1} ((1-\lambda)te_n)
\end{align}
for $t \in \R$.
Let $f(t) : =Z T^{n-1} (te_n)$. For arbitrary $t_1,t_2 > 0 $, setting $t= t_1+t_2$, $\lambda =\frac{t_1}{t_1+t_2}$ in (\ref{a7-2}), we get that $f$ satisfies the Cauchy functional equation
\begin{align*}
f(t_1+t_2) = f(t_1)+f(t_2)
\end{align*}
for every $t_1,t_2 >0$. Since $f$ is continuous, there exists a constant $c_{n-1} \in \R$ such that
\begin{align*}
Z T^{n-1} (te_n) = f(t) = c_{n-1} t
\end{align*}
for $t \geq 0$. Also, since $Z$ is $\SLn$ contravariant,
$Z T^{n-1} (te_n) = Z T^{n-1} (-te_n).$
Hence $Z T^{n-1} (te_n) = c_{n-1} t$ holds for all $t \in \R$. The $\SLn$ contravariance of $Z$ now shows that
$$Z (sT^{n-1}) (te_n) = Z T^{n-1} (s^{n-1} te_n) = c_{n-1} s^{n-1}t$$
Combined with Lemma \ref{lem3.1} and (\ref{pi1}), we have
\begin{align}\label{2}
Z (sT^{n-1}) (x) = c_{n-1} \frac{n!}{2} V_1(sT^{n-1},[-x,x]).
\end{align}
We replace $c_{n-1} \frac{n!}{2}$ by $c_{n-1}$.
Now (\ref{1}) and (\ref{2}) imply that (\ref{ctlowd}) holds for $T^d$ for $0 \leq d \leq n-1$.
Since $Z$ is $\SLn$ contravariant, we can argue as in Lemma \ref{lemuq} to show that (\ref{ctlowd}) holds for $o \in P \subset \R^{n-1}$.
Every lower dimensional polytope containing the origin can be rotated to be contained in $P \subset \R^{n-1}$.
Hence we get the desired results.
\end{proof}

Next we deal with simple valuations.
\begin{lem}\label{lem4.3}
If $Z : \mathcal{P}_o ^n \to \mathcal{C}(\mathbb{R}^n)$ is a simple and $\SLn$ contravariant valuation and the function $r \mapsto Z(rT^n)(rte_n), ~r>0$ is measurable for any $t \in \R$,
then there is a continuous function $\zeta: \R \to \R$ such that
$$Z (sT^n) (te_n) = \frac{s^n}{n!} \zeta\left(\frac{t}{s}\right)
= \int_{\Sn \setminus \{h_{sT^n}=0\}} \zeta \left(\frac{te_n \cdot u}{h_{sT^n}(u)}\right)dV_{sT^n}(u)$$
for $s>0$ and $t \in \R$.
\end{lem}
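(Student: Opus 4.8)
The plan is to pin down $Z$ on all simplices that are ``cones with apex $o$ and a vertex on the $e_n$-axis'' by $\SLn$ contravariance, and then to read off the value on $sT^n$ from a single subdivision.

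Write $G(s,t):=Z(sT^n)(te_n)$, $\Phi(r;\tau):=G(r,r\tau)=Z(rT^n)(r\tau e_n)$ and $\zeta(t):=n!\,Z(T^n)(te_n)$; since $Z(T^n)\in\mathcal C(\R^n)$, the function $\zeta$ is continuous. The second identity in the statement is a point-mass computation: for a simplex $Q\in\MPon$ with $o$ as a vertex every facet other than the one opposite $o$ carries $h_Q=0$, so $V_Q=V_n(Q)\,\delta_{u_Q}$ with $u_Q$ the outer unit normal of the facet opposite $o$; for $Q=sT^n$ one has $u_Q=\tfrac1{\sqrt n}(e_1+\dots+e_n)$, $h_{sT^n}(u_Q)=s/\sqrt n$ and $e_n\cdot u_Q=1/\sqrt n$, hence $\int_{\Sn\setminus\{h_{sT^n}=0\}}\zeta\bigl(\tfrac{te_n\cdot u}{h_{sT^n}(u)}\bigr)\,dV_{sT^n}(u)=\tfrac{s^n}{n!}\zeta(t/s)$. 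Thus it suffices to prove $\Phi(r;\tau)=\tfrac{r^n}{n!}\zeta(\tau)$ for all $r>0$ and $\tau\in\R$.

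\emph{Cones.} Call $Q\in\MPon$ a \emph{cone} if $o$ is a vertex and some vertex $h_Q e_n$, $h_Q>0$, lies on the $e_n$-axis. I would first show
$Z(Q)(te_n)=\Phi\bigl((n!\,V_n(Q))^{1/n};\,t/h_Q\bigr)$.
Indeed, a shear $\phi\in\SLn$ with $\phi e_n=e_n$ moves the remaining $n-1$ vertices of $Q$ into $e_n^\perp$, and an $\SL(n-1)$ map of $e_n^\perp$ (extended by the identity on $\R e_n$) then brings $Q$ to the standard cone $C(b,h_Q):=[o,be_1,\dots,be_{n-1},h_Q e_n]$, where $b^{n-1}h_Q=n!\,V_n(Q)$; both maps fix $e_n$, so they leave $Z(\,\cdot\,)(te_n)$ unchanged. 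Next, with $s:=(b^{n-1}h_Q)^{1/n}$ the diagonal map $\operatorname{diag}(\tfrac sb,\dots,\tfrac sb,\tfrac s{h_Q})\in\SLn$ carries $C(b,h_Q)$ onto $sT^n$, so $Z(\phi K)(x)=ZK(\phi^{-1}x)$ gives $Z(C(b,h_Q))(te_n)=Z(sT^n)(\tfrac{st}{h_Q}e_n)=\Phi(s;t/h_Q)$, which is the claim (for $Q=sT^n$ it reads $Z(sT^n)(te_n)=\Phi(s;t/s)=G(s,t)$).

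\emph{One subdivision.} For $\alpha\in(0,1)$ cut $sT^n$ by $H_\alpha=\{x\in\R^n:(1-\alpha)x_1=\alpha x_2\}$. This hyperplane contains $o$ and $se_n$, hence the whole edge $[o,se_n]$, and it separates $se_1$ from $se_2$, meeting $[se_1,se_2]$ at $p=\alpha se_1+(1-\alpha)se_2$; so $sT^n=Q^+\cup Q^-$ with $Q^+=[o,se_1,p,se_3,\dots,se_n]$ and $Q^-=[o,p,se_2,se_3,\dots,se_n]$, which are cones with $h_{Q^\pm}=s$, and a determinant computation gives $V_n(Q^+)=(1-\alpha)\tfrac{s^n}{n!}$, $V_n(Q^-)=\alpha\tfrac{s^n}{n!}$. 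Since $Z$ is simple and $Q^+\cap Q^-=sT^n\cap H_\alpha$ is $(n-1)$-dimensional, the valuation property gives $Z(sT^n)=Z(Q^+)+Z(Q^-)$; evaluating at $te_n$ and writing $\tau=t/s$, the cone formula yields
\[
\Phi(s;\tau)=\Phi\bigl(s(1-\alpha)^{1/n};\tau\bigr)+\Phi\bigl(s\alpha^{1/n};\tau\bigr).
\]
As $s>0$ and $\alpha\in(0,1)$ vary, $r_1:=s(1-\alpha)^{1/n}$ and $r_2:=s\alpha^{1/n}$ run through all positive pairs with $r_1^n+r_2^n=s^n$, so $\beta(\rho;\tau):=\Phi(\rho^{1/n};\tau)$ satisfies $\beta(\rho_1+\rho_2;\tau)=\beta(\rho_1;\tau)+\beta(\rho_2;\tau)$ for all $\rho_1,\rho_2>0$. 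By hypothesis $r\mapsto Z(rT^n)(r\tau e_n)=\Phi(r;\tau)$ is measurable, hence so is $\rho\mapsto\beta(\rho;\tau)$; an additive measurable function on $(0,\infty)$ is linear, so $\beta(\rho;\tau)=\rho\,\beta(1;\tau)$, i.e.\ $\Phi(r;\tau)=r^n\Phi(1;\tau)=\tfrac{r^n}{n!}\zeta(\tau)$, as required.

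The crux is the choice of the cutting hyperplane: it must contain the edge $[o,se_n]$, so that \emph{both} pieces retain the vertex $o$ and the vertex on the $e_n$-axis and are therefore cones to which the $\SLn$ reduction of the previous step applies; a generic cut such as $\{x_n=c\}$ produces a frustum whose triangulation pieces are not of this form. Once the subdivision is in place, the resulting three-term relation is exactly a Cauchy equation in the variable $\rho=r^n$, and the measurability hypothesis is precisely what upgrades its additive solutions to the linear (hence homogeneous) one — in the continuous versions of the theorems, continuity plays that role instead.
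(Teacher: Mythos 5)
Your proof is correct and follows essentially the same route as the paper: the same subdivision of $sT^n$ by the hyperplane $\{(1-\lambda)x_1=\lambda x_2\}$ through the edge $[o,se_n]$, leading to the same Cauchy functional equation in the variable $s^n$, resolved by the measurability hypothesis. The only cosmetic difference is that you package the identification of the two pieces as rescaled $\SLn$-images of $T^n$ into a general ``cone normalization'' step, whereas the paper writes down the explicit maps $\phi_3,\phi_4$ directly.
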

\begin{proof}
The second equation is trivial. We only need to verify the first equation.

Let $\phi_3, \phi_4 \in \SLn$ such that
\begin{align*}
\phi_3 e_1 = &\lambda^{-1/n} (\lambda e_1 + (1-\lambda) e_2),~\phi_3 e_2 = \lambda^{-1/n} e_2, \\
&\phi_3 e_i = \lambda^{-1/n} e_i,~\text{for}~3 \leq i \leq n,
\end{align*}
and
\begin{align*}
\phi_4 e_1 = &(1-\lambda)^{-1/n} e_1,~\phi_4 e_2 = (1-\lambda)^{-1/n} (\lambda e_1 + (1-\lambda) e_2), \\
&\phi_4 e_i = (1-\lambda)^{-1/n} e_i,~\text{for}~3 \leq i \leq n.
\end{align*}
We use the same notation as in Lemma \ref{lem3.2}.
Note that $sT^{n}\cap H_\lambda ^- = \phi_3 \lambda^{1/n} sT^{n}$, $sT^{n}\cap H_\lambda ^+ = \phi_4 (1-\lambda)^{1/n}sT^{n}$ and $sT^n \cap H_\lambda =\phi _3 \lambda^{1/n}s\widehat{T}^{n-1}$. The valuation property (\ref{val}) for $d=n$ together with the $\SLn$ contravariance and simplicity of $Z$ shows that
\begin{align}\label{a7}
Z (sT^{n}) (x) = Z (\lambda^{1/n} sT^{n}) (\phi_3 ^{-1} x) + Z ((1-\lambda)^{1/n} sT^{n}) (\phi_4^{-1}x).
\end{align}
%
For $t' \in \mathbb{R}$, choosing $x = t' e_n$ in
\eqref{a7}, we have
\begin{align}\label{a7-3}
Z (sT^{n}) ( t'e_n)
=Z (\lambda^{1/n} sT^{n}) (\lambda^{1/n} t' e_n) + Z ((1-\lambda)^{1/n} sT^{n}) ((1-\lambda)^{1/n}t' e_n)
\end{align}
for any $0 < \lambda <1$ and $s>0$.
Let
\begin{align}\label{Cau1}
f(t;r) = Z (r^{1/n} T^{n}) (r^{1/n} te_n)
\end{align} for $r>0$.
For arbitrary $r_1,r_2 > 0 $, $t \in \R$, setting $s= (r_1+r_2)^{1/n}$, $t'= (r_1+r_2)^{1/n}t$, $\lambda =\frac{r_1}{r_1+r_2}$ in (\ref{a7-3}), we get that $f$ satisfies the Cauchy functional equation
\begin{align*}
f(t;r_1+r_2)
= f(t;r_1) + f(t;r_2).
\end{align*}
Since the function $r \mapsto Z(rT^n)(rte_n), ~r>0$ is measurable for any $t \in \R$, so is $f(t;\cdot)$.
Therefore there exists a constant $c(t)$ such that
\begin{align*}
Z (r^{1/n} T^{n}) (r^{1/n} te_n) = f(t;r) = c(t) r
\end{align*}
for every $r > 0$ and $t \in \R$. Hence
$$Z (sT^{n}) (te_n) = c(t/s)s^n.$$
Since $t \mapsto Z (T^{n}) (te_n)$ is continuous, $c(t)$ is also continuous.
Now setting $\zeta(t) = n!c(t)$ completes the proof.
\end{proof}

\begin{lem}\label{lem4.4}
If $Z : \mathcal{P}_o ^n \to \mathcal{C}(\mathbb{R}^n)$ is a simple and $\SLn$ contravariant valuation and $Z(sT^n)(te_n) =0$ for any $s>0$, $t \in \R$, then
\begin{align}\label{valct2}
Z(sT^n)(x) = 0
\end{align}
for any $x \in \R^n$.
\end{lem}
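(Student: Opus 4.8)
The plan is to upgrade the vanishing hypothesis $Z(sT^n)(te_n)=0$ (which only controls the values of $Z(sT^n)$ along the $e_n$-axis) to the full statement $Z(sT^n)(x)=0$ for all $x\in\R^n$, by exploiting $\SLn$ contravariance to move an arbitrary $x$ onto the axis while keeping the simplex $T^n$ (up to scaling) fixed. Concretely, for a given $x\in\R^n$ I would look for $\phi\in\SLn$ with $\phi(sT^n)=s'T^n$ for some $s'>0$ (or more generally $\phi(sT^n)$ a scaled copy of $T^n$, possibly after relabelling vertices) and $\phi^{-1}x = t e_n$ for a suitable $t$; then $\SLn$ contravariance gives $Z(sT^n)(x) = Z(\phi(s'T^n))(x) = Z(s'T^n)(\phi^{-1}x) = Z(s'T^n)(te_n) = 0$. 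The subtlety is that a generic $\phi\in\SLn$ does not map $T^n$ to a scaled copy of itself, so a single $\phi$ will not suffice for all $x$; one needs either a family of such $\phi$ covering a dense set of directions, or to combine the axis-vanishing with the valuation/subdivision identity already used in Lemmas \ref{lem3.2} and \ref{lem4.3}.

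The cleanest route, I expect, is to reuse the subdivision identity \eqref{a7} from the proof of Lemma \ref{lem4.3}: for the hyperplane $H_\lambda$ spanned (together with suitable coordinate vectors) by $(1-\lambda)e_1-\lambda e_2$, simplicity and $\SLn$ contravariance give
\begin{align*}
Z(sT^n)(x) = Z(\lambda^{1/n}sT^n)(\phi_3^{-1}x) + Z((1-\lambda)^{1/n}sT^n)(\phi_4^{-1}x)
\end{align*}
for all $x\in\R^n$, with $\phi_3,\phi_4$ the explicit maps from Lemma \ref{lem4.3}. Applying analogous subdivisions along hyperplanes that cut other pairs of vertices $e_i,e_j$ (and the vertex $o$), together with permutation matrices in $\SLn$ that realize the symmetry group of $T^n$, one generates a rich supply of relations expressing $Z(sT^n)(x)$ in terms of values $Z(s'T^n)(\phi^{-1}x)$ at transformed arguments. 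The key observation to push through is that the linear maps arising this way can send $x$ to points arbitrarily close to the $e_n$-axis: e.g. by iterating shears of the form appearing in Lemma \ref{lem3.1} (the block matrix $\left[\begin{smallmatrix}I&A\\0&B\end{smallmatrix}\right]$), one can drive the ``transverse'' components of $x$ to zero. Since each such shear fixes a face of $T^n$ and maps $T^n$ itself to another polytope whose $Z$-value is, by the subdivision identities, a combination of scaled-simplex values, one can set up an induction or a limiting argument.

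Given the placement of this lemma right after Lemma \ref{lem4.3}, I actually suspect the intended proof is shorter: one observes that in \eqref{a7} (or \eqref{a7-3}) the hypothesis $Z(sT^n)(te_n)=0$ already forces, via the Cauchy-equation analysis of Lemma \ref{lem4.3}, that the function $c(t)$ — equivalently $\zeta$ — is identically zero, and then one runs the \emph{general} $x$ version \eqref{a7} rather than only the axis version \eqref{a7-3}. That is: from Lemma \ref{lem4.3} we know (after checking the measurability hypothesis, which here is automatic since $Z(rT^n)(rte_n)\equiv 0$) that $Z(sT^n)(te_n) = \frac{s^n}{n!}\zeta(t/s)$ with $\zeta$ continuous; the hypothesis says this vanishes, so $\zeta\equiv 0$. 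But the right-hand side of \eqref{a7} depends on $x$ only through $\phi_3^{-1}x$ and $\phi_4^{-1}x$, and one shows inductively (subdividing repeatedly) that $Z(sT^n)(x)$ is a limit of sums of terms $Z(s_iT^n)(y_i)$ where the $y_i$ can be taken to lie on the $e_n$-axis — or, more simply, that the linear functional-type structure forced by the Cauchy equation in the $r$-variable, now with $x$ free, gives $Z(sT^n)(x) = \widetilde c(x/s)\,s^n$ for some function $\widetilde c$ agreeing with the zero function on the axis, and $\SLn$ contravariance applied to shears fixing $T^n$'s facets shows $\widetilde c$ is determined by its axis values.

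The main obstacle will be the bookkeeping in this last step: showing that $\SLn$ contravariance, applied to transformations that do \emph{not} preserve $T^n$, still pins down $Z(sT^n)(x)$ off the axis. I would handle it by the subdivision trick — write $sT^n$ as a union of two pieces each $\SLn$-equivalent to a scaled $T^n$, iterate until the relevant transformed arguments are forced onto (or converge to) the $e_n$-axis, and invoke continuity of $Z(sT^n)(\cdot)$ to conclude. The hypotheses of Lemma \ref{lem4.3} are satisfied trivially here, so that lemma may be quoted directly to get $\zeta\equiv 0$ as the starting point, after which the problem reduces to propagating vanishing from the axis to all of $\R^n$.
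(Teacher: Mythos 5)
You correctly identify the engine of the proof: the subdivision identity \eqref{a7} together with the hypothesis that $Z(sT^n)$ vanishes on the $e_n$-axis. But the heart of the lemma is precisely the step you defer as ``bookkeeping'': \emph{how} iterating \eqref{a7} actually moves a general $x$ onto the axis. Your proposal never supplies this mechanism, and the concrete alternatives you sketch do not work. Shears of the block form $\left[\begin{smallmatrix}I&A\\0&B\end{smallmatrix}\right]$ from Lemma \ref{lem3.1} stabilize a \emph{lower-dimensional} polytope; the stabilizer of the full-dimensional $T^n$ in $\SLn$ is essentially the finite group permuting its vertices, so no shear ``fixes a face of $T^n$'' in a way that lets you move $x$ toward the axis while controlling $Z(T^n)$. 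Likewise, a limiting argument ``toward the axis'' is not set up: continuity of $Z(sT^n)(\cdot)$ lets you pass to limits in $x$, but you would first need the value at a sequence of off-axis points, which is exactly what is unknown. Quoting Lemma \ref{lem4.3} to get $\zeta\equiv 0$ is also redundant here — the hypothesis already gives vanishing on the axis — so it contributes nothing to the off-axis propagation.

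The paper's actual argument is an induction on the number $m$ of nonzero coordinates of $x$ (after using a permutation in $\SLn$, possible since $n\ge 3$, to place them first; $m=1$ is the hypothesis). The inductive step applies \eqref{a7} with $\lambda$ chosen \emph{as a function of $x$}: if $x_1,x_2$ have the same sign, take $\lambda=\frac{x_1}{x_1+x_2}$, and then $\phi_3^{-1}x$ and $\phi_4^{-1}x$ are scalar multiples of $(x_1+x_2)e_1+\dot{x}$ and $(x_1+x_2)e_2+\dot{x}$ respectively — each with only $m-1$ nonzero coordinates, hence killed by the induction hypothesis. When $x_1,x_2$ have opposite signs one instead solves \eqref{a7} for the term containing $x_1e_1+x_2e_2+\dot{x}$ (choosing $\lambda=\frac{x_1+x_2}{x_1}$ or $\lambda=-\frac{x_1}{x_2}$ according to which of $|x_1|,|x_2|$ is larger), and the boundary case $x_1+x_2=0$ follows by continuity of $Z(sT^n)$. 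This sign-dependent choice of $\lambda$ that merges two nonzero coordinates into one is the missing idea; without it your proposal does not close.
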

\begin{proof}
We will use induction on the number $m$ of coordinates of $x$ not equal to zero.
Since $Z$ is $\SLn$ contravariant, we assume that the first $m$ coordinates $x_1,\dots,x_m$ are not zero.
The assumption $Z(sT^n)(te_n) =0$ and the $\SLn$ contravariance of $Z$ show that \eqref{valct2} holds for $m=1$. Now assume that (\ref{valct2}) holds for $m-1$. Let $\dot{x}=x_3 e_3 + \dots + x_me_m$.

If $x_1, x_2$ have the same sign, then taking $x=x_1 e_1 +x_2 e_2+\dot{x}$ and $\lambda = \frac{x_1}{x_1+x_2}$ in (\ref{a7}), we obtain
\begin{align}\label{ct1}
&Z\left(sT^{n}\right) \left(x_1 e_1 +x_2 e_2 + \dot{x}\right) \nonumber \\
&= Z\left(\lambda^{1/n}sT^{n}\right) \left(\lambda^{1/n} \left(\left(x_1+x_2\right)e_1 + \dot{x}\right)\right) \nonumber \\
& \qquad \qquad + Z\left(\left(1-\lambda\right)^{1/n}sT^{n}\right) \left(\left(1-\lambda\right)^{1/n}\left(\left(x_1+x_2\right)e_2 + \dot{x}\right)\right).
\end{align}

If $x_1 > -x_2 >0$ or $-x_1 > x_2 >0$, then taking $x= \lambda^{-1/n}((x_1 +x_2) e_1+\dot{x})$, $\lambda = \frac{x_1+x_2}{x_1}$ and $s=\lambda ^{-1/n}s$ in (\ref{a7}), we obtain
\begin{align}\label{ct2}
&Z\left(\lambda^{-1/n}sT^{n}\right) \left(\lambda^{-1/n}\left(\left(x_1 +x_2\right) e_1 + \dot{x}\right)\right) \nonumber\\
&=Z\left(sT^{n}\right) \left(\left(x_1 e_1+x_2 e_2 + \dot{x}\right)\right) \nonumber \\
&\qquad \qquad + Z\left(\lambda^{-1/n}\left(1-\lambda\right)^{1/n}sT^{n}\right) \left(\lambda^{-1/n}\left(1-\lambda\right)^{1/n}\left(\left(x_1+x_2\right)e_1 + \dot{x}\right) \right).
\end{align}

If $x_2 > -x_1 >0$ or $-x_2 > x_1 >0$, then taking $x= (1-\lambda) ^{-1/n}((x_1 +x_2) e_2+\dot{x})$, $\lambda = -\frac{x_1}{x_2}$ and $s=(1-\lambda) ^{-1/n}s$ in \eqref{a7}, we obtain
\begin{align}\label{ct3}
&Z\left(\left(1-\lambda\right) ^{-1/n}sT^{n}\right) \left(\left(1-\lambda\right) ^{-1/n}\left(\left(x_1 +x_2\right) e_2 +\dot{x}\right)\right) \nonumber \\
&=Z\left(\left(1-\lambda\right) ^{-1/n} \lambda^{1/n} sT^{n}\right) \left(\left(1-\lambda\right) ^{-1/n} \lambda^{1/n} \left(\left(x_1 +x_2\right) e_2+\dot{x}\right)\right) \nonumber \\
&\qquad \qquad + Z\left(sT^{n}\right) \left(x_1 e_1+x_2 e_2 + \dot{x}\right).
\end{align}

Now that \eqref{valct2} holds for $m$ follows directly from the induction assumption together with (\ref{ct1}), (\ref{ct2}), (\ref{ct3}) and the continuity of $Z(sT^{n})$.
\end{proof}

Before proving Theorem \ref{thm:cont}, we first show a slightly stronger result. This result will be used for Corollaries \ref{qhom} and \ref{co:mixv}.
\begin{thm1.2'}
Let $Z : \mathcal{P}_o ^n \to \mathcal{C}(\mathbb{R}^n)$ be an $\SLn$ contravariant valuation. If the function $r \mapsto Z(rT^n)(rte_n), ~r>0$ is measurable for any $t \in \R$, then
there are constants $c_0,c_0',c_{n-1} \in \R$ and a continuous function $\zeta : \R \to \R$ such that
\begin{align*}
Z P (x)&=\int_{\Sn \setminus \{h_P=0\}} \zeta \left(\frac{x \cdot u}{h_{P}(u)}\right)d V_P(u) + c_{n-1} V_1(P,[-x,x])  \notag \\
&\qquad + c_0V_0(P) + c_0' (-1)^{\dim P} \mathbbm{1}_{\relint P}(o)
\end{align*}
for every $P \in \mathcal {P}_o^n$ and $x \in \R^n$. Moreover, $c_0,c_0',c_{n-1}$ and $\zeta$ are uniquely determined by $Z$.
\end{thm1.2'}
\begin{proof}
Let $Z : \mathcal{P}_o ^n \to \mathcal{C}(\mathbb{R}^n)$ be an $\SLn$ contravariant valuation.
Set $c_0 := Z [o,e_1] (o)$ and $c_0' = Z \{o\} (o)-c_0$.
The new valuation $Z'P = Z P - c_0V_0(P) - c_0'(-1)^{\dim P} \mathbbm{1}_{\relint P}(o)$ is an $\SLn$ contravariant valuation satisfying $Z' \{o\} (o) =0$ and $Z' [o,e_1] (o)=0$.
By Lemma \ref{lem3.2}, we have
\begin{align*}
Z' P (x)- c_0V_0(P) - c_0'(-1)^{\dim P} \mathbbm{1}_{\relint P}(o) = c_{n-1}V_1(P,[-x,x])
\end{align*}
for every $x\in \R^n$ and $P \in \mathcal{P}_o ^n$ satisfying $\dim P \leq n-1$. Now let $Z''(P) (x)= Z P (x)- c_0V_0(P) - c_0'(-1)^{\dim P} \mathbbm{1}_{\relint P}(o) - c_{n-1}V_1(P,[-x,x])$ for every $P \in \mathcal{P}_o ^n$ and $x \in \R^n$.
Then $Z''$ is a simple and $\SLn$ contravariant valuation. Also the function $r \mapsto Z''(rT^n)(rte_n), ~r>0$ is measurable for any $t \in \R$. Similarly Lemma \ref{lem4.3} and Lemma \ref{lem4.4} together show that there is a continuous function $\zeta : \R \to \R$ such that
\begin{align*}
Z''(sT^n)-Z_\zeta(sT^n) =0.
\end{align*}
Here $Z_\zeta P (x) = \int_{\Sn \setminus \{h_P=0\}} \zeta \left(\frac{x \cdot u}{h_{P}(u)}\right)d V_P(u)$ was studied in Theorem \ref{thm:val}.
Lemma \ref{lemuq} now shows that $Z'' P-Z_\zeta P=0$ for every $P \in \MPon$. Hence
$$Z P (x) = Z_\zeta P (x)+ c_{n-1}V_1(P,[-x,x])+ c_0V_0(P) + c_0'(-1)^{\dim P} \mathbbm{1}_{\relint P}(o)$$
for every $P \in \MP_o^n$ and $x\in \R^n$. Clearly $c_0,c_0'$ are uniquely determined by $Z[o,e_1]$ and $Z\{o\}$. Hence $c_{n-1}$ is uniquely determined by $Z$ on $(n-1)$-dimensional polytopes. Finally $\zeta$ is uniquely determined by $Z T^n$.
\end{proof}

\begin{proof}[Proof of Theorem \ref{thm:cont}]
We only need to show that the measurability of $Z$ implies the measurability of the function $r \mapsto Z(rT^n)(rte_n), ~r>0$ for any $t \in \R$. For fixed $t \in \R$, define functions
\begin{align*}
F_1 : ~&\R \to \R^n \times \MPon  &F_2:~ &\R^n \times \MPon \to  \R^n \times \mathcal{C}(\R^n)   \\
&r~  \mapsto (rte_n, r T^n)   &     &~(x,P) ~~\mapsto (x,ZP)
\end{align*}
and
\begin{align*}
F_3:~ &\R^n \times \mathcal{C}(\R^n) \to \R \\
  &~~~(x,g) ~~~~~\mapsto g(x).
\end{align*}
Clearly $F_1$ is continuous and $F_2$ is measurable follows from the assumptions. The evaluation map $F_3$ is continuous; see \cite[Theorem 46.10]{Mun2000topo}. Hence $Z(rT^n)(rte_n) = F_3 \circ F_2 \circ F_1 (r)$ is measurable.
\end{proof}

\begin{proof}[Proof or Theorem \ref{thm:MKo}]
Let $Z$ be a valuation satisfying all conditions.
Theorem \ref{thm:cont} shows that $Z$ has the representation \eqref{val22} on $\MPon$.
Since $Z_\zeta $ is simple and $Z, V_1(\cdot,[-x,x]), V_0$ are continuous valuations on lower dimensional polytopes, $c_0'=0$.

Now we need to show that the continuity of $Z_\zeta$ implies $\lim_{|t|\to \infty}\zeta(t)/t =0$. Let $P=\sum_{i=1}^{n-1}[-e_i,e_i] + [o,e_n]$ and $P_t=\sum_{i=1}^{n-1}[-e_i,e_i] + [-\frac{1}{t} e_n,e_n]$ for $t >0$. Clearly $P_t \to P$ when $t \to \infty$. Hence, we have
\begin{align*}
\lim_{t \to \infty} \int_{\Sn} \zeta \left(\frac{x \cdot u}{h_{P_t}(u)}\right)d V_{P_t}(u) = \int_{\Sn \setminus \{-e_n\}} \zeta \left(\frac{x \cdot u}{h_{P}(u)}\right)d V_P(u)
\end{align*}
for any $x \in \R^n$.
Thus
\begin{align*}
\lim_{t \to \infty}\zeta \left(-x_n t\right)\frac{1}{nt}2^{n-1} = 0
\end{align*}
for any $x_n \in \R$. By taking $x_n = \pm 1$, we obtain
\begin{align*}
\lim_{|t| \to \infty}\zeta \left(t\right)/t = 0.
\end{align*}
The desired result now follows from the fact that $\MP_o^n$ is a dense subset of $\MK_o^n$. The uniqueness of $c_0,c_{n-1},\zeta$ follows as in Theorem \ref{thm:cont}.
\end{proof}

\begin{proof}[Proof of Theorems \ref{thm:MK} and \ref{thm:MP}]
Let $Z$ be a valuation satisfying all conditions. First for Theorem \ref{thm:MP}, by Lemma \ref{lemuq2}, we only need to show that $Z$ has the corresponding representation at $sT^d$ and $s[e_1,\dots,e_d]$.
Applying Theorem \ref{thm:cont} to $\MPon$, there are constants $a_0, a_0',a_{n-1}$ and a continuous $\xi: \R \to \R$ such that
\begin{align*}
&Z(sT^d)(x)  \\
&= Z_\xi (sT^d)(x) + a_{n-1}V_1(sT^d[-x,x])+ a_0V_0(sT^d) + a_0'(-1)^{d} \mathbbm{1}_{\relint (sT^d)}(o)
\end{align*}
for every $x \in \R^n$.

Let $\MTon $ be the set of simplices in $\R^n$ with one vertex at the origin.
For any $T \in \MTon \setminus \{o\}$, we write $T'$ for its facet opposite to the origin.
We define the new map $\widetilde{Z}: \MTon \setminus \{o\} \to \mathcal{C}(\R^n)$ by $\widetilde{Z} (T) = Z (T')$ for any $T \in \MTon \setminus \{o\}$.
It is not hard to show that $\widetilde{Z}$ is a measurable $\SLn$ contravariant valuation on $\MTon \setminus \{o\}$.
In the proof of Theorem \ref{thm:cont}, we actually proved that Theorem \ref{thm:cont} also holds on $\MTon \setminus \{o\}$. Hence there are constants $b_0, b_0',b_{n-1}$ and a continuous $\widetilde{\xi}: \R \to \R$ such that
\begin{align*}
Z (s [e_1,\dots,e_d])(x) &= \widetilde{Z}(sT^d)(x)  \\
&= Z_{\widetilde{\xi}} (sT^d)(x) + b_{n-1}V_1(sT^d[-x,x])+ b_0V_0(sT^d)
\end{align*}
for every $x \in \R^n$ (The term $(-1)^{d} \mathbbm{1}_{\relint (sT^d)}(o)$ does not appear since it only depends on the valuation at $o$).

Now we choose new constants $c_0,c_0',\widetilde{c}_0,c_{n-1},\widetilde{c}_{n-1}$ and continuous functions $\zeta, \widetilde{\zeta}$ such that
\begin{align*}
\zeta(t)= \xi(t) - \widetilde{\xi}(t) + 2(a_{n-1}-b_{n-1})|t|,~~ \widetilde{\zeta}(t)=\widetilde{\xi}(t)-2(a_{n-1}-b_{n-1})|t|
\end{align*}
for $t \in \R$ and
\begin{align*}
c&_{n-1} = a_{n-1}- b_{n-1}, ~~  \widetilde{c}_{n-1}=b_{n-1}, \\
&c_0=b_0,~~c_0'=a_0',~~\widetilde{c}_0=a_0-b_0.
\end{align*}
Hence \eqref{val24} holds for $sT^d$ and $[se_1,\dots,e_d]$ for $0 \leq d \leq n$, which completes the proof of Theorem \ref{thm:MP}.

Now using the continuity of $Z$ on $1$-dimensional polytopes, we have $c_0'=\widetilde{c}_0=0$. Also, since $V_0(K)$, $V_1(K,[-x,x])$ and $V_1([K,o],[-x,x])$ are continuous valuations, similarly to the proof of Theorem \ref{thm:MKo}, we only need to show the fact that
\begin{align*}
\int_{\Sn \setminus \{h_P =0\}} \zeta \left(\frac{x \cdot u}{h_{P}(u)}\right)d V_P(u)  + \int_{\Sn \setminus \{h_{[P,o]} =0\}} \widetilde{\zeta} \left(\frac{x \cdot u}{h_{[P,o]}(u)}\right)d V_{[P,o]}(u)
\end{align*}
is a continuous valuation implies $\lim_{|t| \to \infty}\zeta \left(t\right)/t = 0$ and $\lim_{|t| \to \infty}\widetilde{\zeta} \left(t\right)/t = 0$. Let $P \in \MPon$. In the proof of Theorem \ref{thm:cont}, we have already shown that
\begin{align*}
\lim_{|t| \to \infty} \frac{\zeta \left(t\right)+ \widetilde{\zeta}\left(t\right)}{t}= 0.
\end{align*}
Now let $P_t = \sum_{i=1}^{n-1}[-e_i,e_i] + \frac{1}{t} e_n$ and $P= \sum_{i=1}^{n-1}[-e_i,e_i]$. Hence $P_t \to P$ when $t \to \infty$. Similarly to the proof of Theorem \ref{thm:cont}, we get
\begin{align*}
\lim_{|t| \to \infty} \widetilde{\zeta} \left(t\right)/t= 0,
\end{align*}
which completes the proof of Theorem \ref{thm:MK}.
\end{proof}

\begin{section}{Proof of the corollaries}\label{sec:proofco}
Let $Z$ be a valuation satisfying all conditions. We need to prove that $Z$ has the corresponding representation in all corollaries. In the following, we always let $P \in \MPon$.
\begin{proof}[Proof of Corollaries \ref{p>=1} and \ref{p<1}]
Let $p \geq 0$. Since $Z P(\lambda x) = \lambda^p Z P(x)$ for any $\lambda >0$, by Theorem \ref{thm:cont}, we have
\begin{align*}
\lambda^p Z P (x)&=\int_{\Sn \setminus \{h_P=0\}} \zeta \left(\frac{\lambda x \cdot u}{h_{P}(u)}\right)d V_P(u) + c_{n-1} \lambda V_1(P,[-x,x])  \\
&\qquad + c_0V_0(P) + c_0' (-1)^{\dim P} \mathbbm{1}_{\relint P}(o)
\end{align*}
for any $\lambda>0$ and $x \in \R^n$. Comparing coefficients, we have
\begin{align}\label{phom4}
\lambda^p Z P (x)&=\int_{\Sn \setminus \{h_P=0\}} \zeta \left(\frac{\lambda x \cdot u}{h_{P}(u)}\right)d V_P(u)
\end{align}
for $p>0$ and $p \neq 1$,
\begin{align}\label{phom6}
\lambda Z P (x)&=\int_{\Sn \setminus \{h_P=0\}} \zeta \left(\frac{\lambda x \cdot u}{h_{P}(u)}\right)d V_P(u)+ c_{n-1} \lambda V_1(P,[-x,x])
\end{align}
for $p=1$ and
\begin{align}\label{phom5}
Z P (x)&=\int_{\Sn \setminus \{h_P=0\}} \zeta \left(\frac{\lambda x \cdot u}{h_{P}(u)}\right)d V_P(u) + c_0V_0(P) + c_0'(-1)^{\dim P} \mathbbm{1}_{\relint P}(o)
\end{align}
for $p=0$.
Now let $P=T^n$, $x=\pm e_n$, \eqref{phom4} induces that
\begin{align*}
\lambda^p Z T^n (\pm e_n) =\frac{\zeta \left(\pm \lambda \right)}{n!}.
\end{align*}
Let $\hat{c}_{n-p}^+ = n! Z T^n (e_n)$ and $\hat{c}_{n-p}^- = n! Z T^n (-e_n)$. Thus $\zeta(t) = \hat{c}_{n-p}^+ (t)_+^p + \hat{c}_{n-p}^- (t)_-^p$, $t \in \R^n$ for $p>0$ and $p \neq 1$. Similarly, \eqref{phom6} implies that
$\zeta(t) =\hat{c}_{n-p}^+ (t)_+ + \hat{c}_{n-p}^- (t)_-$, $t \in \R$
for suitable $\hat{c}_{n-p}^+, \hat{c}_{n-p}^- \in \R$,
and \eqref{phom5} implies that
\begin{align*}
\zeta(t) =c_n^+,  ~~ \zeta(-t) =c_n^-
\end{align*}
for suitable constants $c_n^+, c_n^-$ when $t>0$. Since $\zeta$ is a continuous function, $c_n^+=c_n^-$. Hence $\zeta \equiv c_n$ for a suitable constant $c_n$.
Now back to \eqref{phom4}-\eqref{phom5}, we obtain the desired result.
\end{proof}

\begin{proof}[Proof of Corollary \ref{co:Orl}]
First note that a convex function from $\R^n$ to $\R$ is continuous.
Since $Z$ vanishes at $[o,e_1]$, $\{o\}$, $T^{n-1}$, step by step, we get that $c_0,c_0',c_{n-1}=0$ in Theorem \ref{thm:cont}. Now
\begin{align*}
Z T^n  (t e_n) = \frac{1}{n!} \zeta(t)
\end{align*}
for any $t \in \R$. Since $Z T^n$ is a convex function, $\zeta$ is also convex.
\end{proof}

\begin{proof}[Proof of Corollary \ref{qhom}]
Let $p \in \R$.
Since $Z T^n(\cdot)$ is a continuous function and we further assume that $Z (\lambda T^n) = \lambda^{n-p} Z T^n$ for $\lambda>0$, it follows that the function $r \mapsto Z(rT^n)(rte_n)$ is continuous on $(0, \infty)$.
By Theorem 1.2$'$, we get
\begin{align*}
\lambda^{n-p} Z P (x)&=\int_{\Sn \setminus \{h_P=0\}} \zeta \left(\frac{x \cdot u}{\lambda h_{P}(u)}\right)\lambda^n d V_P(u) + c_{n-1} \lambda^{n-1} V_1(P,[-x,x])  \\
&\qquad + c_0V_0(P) + c_0' (-1)^{\dim P} \mathbbm{1}_{\relint P}(o)
\end{align*}
for any $\lambda>0$ and $x \in \R^n$. Now using similar arguments as in the proof of Corollaries \ref{p>=1} and \ref{p<1}, we get the desired result.
\end{proof}

\begin{proof}[Proof of Corollary \ref{trans}]
First applying the translation invariance on lower dimensional convex bodies in Theorem \ref{thm:cont}, we obtain that $c_0' =0$. We only need to show that the $\zeta$ in Theorem \ref{thm:cont} is now a constant function.
Let $P = \sum_{i=1}^n[-e_i,e_i]$ and $-1 \leq t \leq 1$. Since $Z$ is translation invariant, we have
$Z (P+te_n) (re_n) = Z (P)(r e_n)$ for any $r\in\R$.
Together with Theorem \ref{thm:cont},
\begin{align}\label{Cau2}
\zeta \left(\frac{r}{1+t}\right)(1+t)2^{n-1} + \zeta\left(\frac{-r}{1-t}\right)(1-t)2^{n-1} = \zeta \left(r\right)2^{n-1} + \zeta\left(-r\right)2^{n-1}
\end{align}
for $-1 < t <1$ and
\begin{align}\label{Cau3}
\zeta \left(\frac{r}{2}\right)2^{n}  = \zeta \left(r\right)2^{n-1} + \zeta\left(-r\right)2^{n-1}.
\end{align}
Let $f(t)= \zeta\left(\frac{1}{t}\right)t$ for $t \neq 0$.
The relation \eqref{Cau3} implies that
\begin{align*}
f(2t) = f(t) - f(-t)
\end{align*}
for any $t \neq 0$. Now changing $t$ to $-t$, we get that
\begin{align}\label{odd}
f(-t) = -f(t)
\end{align}
Let $t_1=-\frac{1-t}{r}$ and $t_2= \frac{2}{r}$. Hence $t_1+t_2=\frac{1+t}{r}$. Back to \eqref{Cau2} and \eqref{Cau3}, we obtain that
\begin{align}\label{Cau4}
f(t_1) + f(t_2) = f(t_1+t_2)
\end{align}
for any $t_2 \neq 0$ and $t_1 \in (0,-t_2)$. Set $f(0)=0$. Together with \eqref{odd}, the equation \eqref{Cau4} holds for any $t_2 \in \R$ and $t_1 \in [0,-t_2]$. Now let $t_1 \in (0,t_2]$. We have
\begin{align*}
f(t_1+t_2) + f(-t_1) = f(t_2).
\end{align*}
Also by \eqref{odd},
\begin{align*}
 -f(-t_1)+f(t_2) = f(t_1) + f(t_2).
\end{align*}
Thus \eqref{Cau4} holds for any $t_2 \in \R$ and $|t_1| \leq |t_2|$.
Now changing the order of $t_1,t_2$, we find that \eqref{Cau4} holds for any $t_1,t_2 \in \R$. Since $f(t)$ is continuous on $t \neq 0$, there exists a constant $c_n \in \R$ such that $f(t) = c_n t$. Recall that $f(t)= \zeta\left(\frac{1}{t}\right)t$ for $t \neq 0$. Combined with continuity of $\zeta$, we finally get $\zeta(t) = c_n$.
\end{proof}

\begin{proof}[Proof of Corollary \ref{trans2}]
Applying the translation invariance on lower dimensional convex bodies in Theorem \ref{thm:MP}, we obtain that $\widetilde{c}_{n-1} = c_0'=\widetilde{c}_0=0$. Further applying translation invariance on $\MPon$ in the proof of Corollary \ref{trans}, we get
\begin{align*}
\zeta + \widetilde{\zeta} \equiv c_n
\end{align*}
for a suitable constant $c_n$. Now let $P_t = \sum_{i=1}^{n-1}[-e_i,e_i] + t e_n$ for $t \in \R$. Since $Z (P_t) = Z (P_0)$, we have $\widetilde{\zeta} = 0$, which completes the proof.
\end{proof}

\begin{proof}[Proof of Corollary \ref{co:mixv}]
Clearly the representation of $Z$ satisfies all the conditions.
Now let $Z : \MPon \times \MK_c^n \to \R$ be an $\SLn$ invariant map. Set $Z' P (x) := Z(P,[-x,x])$ for $P \in \MPon$ and $x \in \R^n$.
Since $Z$ is a valuation with respect to the first variable and continuous with respect to the second variable, $Z'$ is a $\mathcal{C}(\R^n)$ valued valuation.
For fixed $P \in \MPon$ and $x \in \R^n$, let $f(t) = Z(P,[-t^{1/p}x,t^{1/p}x])$ for $t \geq 0$. Since
\begin{align*}
Z(P,[-(t_1+t_2)^{1/p}  x,(t_1+t_2)^{1/p}x])
&= Z(P,[-t_1^{1/p} x,t_1^{1/p}x]+_p [-t_2^{1/p}x,t_2^{1/p}x])\\
&= Z(P,[-t_1^{1/p} x, t_1^{1/p}x]) + Z(P,[-t_2^{1/p} x, t_2^{1/p}x])
\end{align*}
for any $t_1,t_2 \geq 0$ and $x \in \R^n$, we have $f(t_1+t_2) = f(t_1)+f(t_2)$ for any $t_1,t_2 \geq 0$. Hence $Z'P(tx) = f(t^p) = t^pf(1) = t^pZ'P(x)$ for $t \geq 0$ and $x\in \R^n$.
Together with the measurability of $Z$ with respect to the first variable, we obtain that the function $r \mapsto Z'(rT^n)(rte_n) = r^p Z'(rT^n)(te_n)= r^p Z(rT^n,[-te_n,te_n])$ is measurable. Hence by Theorem 1.2$'$ (similar to the proof of Corollary \ref{p>=1}) and the symmetry of the function $Z'P$, there are constants $\widehat{c}_{n-p},c_{n-1}\in \R$ such that
\begin{align*}
&Z(P,[-x,x]) \\
&=\widehat{c}_{n-p} \int_{\Sn \setminus \{h_P=0\}} |x \cdot u|^p h_P^{1-p}(u)d S_P(u) + \delta_p^1 c_{n-1} V_1(P,[-x,x])
\end{align*}
for every $P \in \MPon$ and $x \in \R^n$. Now the continuity and $L_p$ additivity of $Z$ with respect to the second variable shows that
\begin{align*}
Z (P,L) =  \widehat{c}_{n-p} \widehat{V}_p(P,L) + c_{n-1}\delta_p^1 V_1(P,L)
\end{align*}
for general $L_p$ zonoids. Here $L \in \MK_c^n$ is a general $L_p$ zonoids if $h_L^p (x) = \int_{\Sn} |x \cdot u|^p d \mu(u)$ for a signed Borel measure $\mu$ on $\Sn$. Also since the set of general $L_p$ zonoids is a dense subset of $\MK_c^n$ for $p$ not even (by combining \cite{Rub1998inv} with \cite[Theorem 3.4.1]{Sch14}), we get the desired result.
\end{proof}
\end{section}

\section*{Acknowledgement}
\addcontentsline{toc}{section}{Acknowledgement}
The authors wish to thank referees for careful reading and many valuable suggestions.
The work of the author was supported in part by the Austrian Science Fund (FWF), Project number: I 3027, the European Research Council (ERC), Project number: 306445, and the National Natural Science Foundation of China Project number: 11671249.

\addcontentsline{toc}{section}{References}


\begin{thebibliography}{10}
\providecommand{\url}[1]{\texttt{#1}}
\providecommand{\urlprefix}{URL }
\expandafter\ifx\csname urlstyle\endcsname\relax
  \providecommand{\doi}[1]{doi:\discretionary{}{}{}#1}\else
  \providecommand{\doi}{doi:\discretionary{}{}{}\begingroup
  \urlstyle{rm}\Url}\fi

\bibitem{abardia2011p}
\textit{J.~Abardia} and \textit{A.~Bernig}, Projection bodies in complex vector
  spaces, Adv. Math. \textbf{227} (2011), no.~2, 830--846.

\bibitem{Ale99}
\textit{S.~Alesker}, Continuous rotation invariant valuations on convex sets,
  Ann. of Math. \textbf{149} (1999), no.~3, 977--1005.

\bibitem{Ale01}
\textit{S.~Alesker}, {Description of translation invariant valuations on convex
  sets with solution of P. McMullen's conjecture}, Geom. Funct. Anal.
  \textbf{11} (2001), no.~2, 244--272.

\bibitem{AS2017+}
\textit{S.~Alesker} and \textit{F.~E. Schuster}, Affine invariant bivaluations, preprint.

\bibitem{BF2011herm}
\textit{A.~Bernig} and \textit{J.~H. Fu}, Hermitian integral geometry, Ann. of
  Math.  (2011), 907--945.

\bibitem{Blas1937vor}
\textit{W.~Blaschke}, {Vorlesungen {\"u}ber Integralgeometrie}, 20, BG Teubner,
  1937.

\bibitem{Bor2013}
\textit{K.~J. B\"{o}r\"{o}czky}, {Stronger versions of the Orlicz-Petty
projection inequality}, J. Differential Geom. \textbf{95} (2013), no.~2,
  215--247.

\bibitem{BL2017Min}
\textit{K.~J. B\"{o}r\"{o}czky} and \textit{M.~Ludwig}, {Minkowski valuations
  on lattice polytopes}, J. Eur. Math. Soc. (2017), in press.

\bibitem{BLYZ12}
\textit{K.~J. B\"{o}r\"{o}czky}, \textit{E.~Lutwak}, \textit{D.~Yang} and
  \textit{G.~Zhang}, {The log-Brunn-Minkowski inequality}, Adv. Math.
  \textbf{231} (2012), 1974--1997.

\bibitem{BLYZ13}
\textit{K.~J. B\"{o}r\"{o}czky}, \textit{E.~Lutwak}, \textit{D.~Yang} and
  \textit{G.~Zhang}, The logarithmic minkowski problem, J. Amer. Math. Soc.
  \textbf{26} (2013), no.~3, 831--852.

\bibitem{BLYZ14}
\textit{K.~J. B\"{o}r\"{o}czky}, \textit{E.~Lutwak}, \textit{D.~Yang} and
  \textit{G.~Zhang}, Affine images of isotropic measures, J. Differential Geom.
  \textbf{99} (2015), no.~3, 407--442.

\bibitem{CLZ2017lpMA}
\textit{S.~Chen}, \textit{Q.-R. Li} and \textit{G.~Zhu}, {On the $L_p$
  Monge--Amp{\`e}re equation}, J. Differential Equations \textbf{263} (2017),
  no.~8, 4997--5011.

\bibitem{CLM2017BM}
\textit{A.~Colesanti}, \textit{G.~V. Livshyts} and \textit{A.~Marsiglietti},
  {On the stability of Brunn--Minkowski type inequalities}, J. Funct. Anal.
  \textbf{273} (2017), no.~3, 1120--1139.

\bibitem{CLM2017Min}
\textit{A.~Colesanti}, \textit{M.~Ludwig} and \textit{F.~Mussnig}, Minkowski
  valuations on convex functions, Calc. Var. Partial Differential Equations
  \textbf{56} (2017), no.~6, 56:162.

\bibitem{GHW14}
\textit{R.~J. Gardner}, \textit{D.~Hug} and \textit{W.~Weil}, {The
  Orlicz-Brunn-Minkowski theory: A general framework, additions, and
  inequalities}, J. Differential. Geom. \textbf{97} (2014), no.~3, 427--476.

\bibitem{Hab09}
\textit{C.~Haberl}, Star body valued valuations, Indiana Univ. Math. J.
  \textbf{58} (2009), no.~5, 2253--2276.

\bibitem{Hab12b}
\textit{C.~Haberl}, Minkowski valuations intertwining with the special linear
  group, J. Eur. Math. Soc. \textbf{14} (2012), no.~5, 1565--1597.

\bibitem{HL06}
\textit{C.~Haberl} and \textit{M.~Ludwig}, {A characterization of $L_p$
  intersection bodies}, Int. Math. Res. Not. \textbf{10548} (2006), 1--29.

\bibitem{HLYZ10}
\textit{C.~Haberl}, \textit{E.~Lutwak}, \textit{D.~Yang} and \textit{G.~Zhang},
  {The even Orlicz Minkowski problem}, Adv. Math. \textbf{224} (2010), no.~6,
  2485--2510.

\bibitem{HP14b}
\textit{C.~Haberl} and \textit{L.~Parapatits}, {The centro-affine Hadwiger
  theorem}, J. Amer. Math. Soc. \textbf{27} (2014), no.~3, 685--705.

\bibitem{HP14a}
\textit{C.~Haberl} and \textit{L.~Parapatits}, Valuations and surface area
  measures, J. Reine Angew. Math. \textbf{687} (2014), 225--245.

\bibitem{HS09b}
\textit{C.~Haberl} and \textit{F.~E. Schuster}, {Asymmetric affine~$L_p$
  Sobolev inequalities}, J. Funct. Anal. \textbf{257} (2009), no.~3, 641--658.

\bibitem{HS09a}
\textit{C.~Haberl} and \textit{F.~E. Schuster}, {General $L_p$ affine
  isoperimetric inequalities}, J. Differential Geom. \textbf{83} (2009), no.~1,
  1--26.

\bibitem{HuaH12}
\textit{Q.~Huang} and \textit{B.~He}, {On the Orlicz Minkowski problem for
  polytopes}, Discrete Comput. Geom. \textbf{48} (2012), no.~2, 281--297.

\bibitem{Kla95}
\textit{D.~Klain}, {A short proof of Hadwiger's characterization theorem},
  Mathematika \textbf{42} (1995), no.~2, 329--339.

\bibitem{LL2016LpMV}
\textit{J.~Li} and \textit{G.~Leng}, {$L_p$ Minkowski valuations on polytopes},
  Adv. Math. \textbf{299} (2016), 139--173.

\bibitem{LL2017OV}
\textit{J.~Li} and \textit{G.~Leng}, Orlicz valuations, Indiana Univ. Math. J.
  \textbf{66} (2017), 791--819.

\bibitem{LM2017Lap}
\textit{J.~Li} and \textit{D.~Ma}, {Laplace transforms and valuations}, J.
  Funct. Anal. \textbf{272} (2017), no.~2, 738--758.

\bibitem{Lin2017OPS}
\textit{Y.~Lin}, {Affine Orlicz P{\'o}lya--Szeg{\"o} principle for log-concave
  functions}, J. Funct. Anal. \textbf{273} (2017), no.~10, 3295--3326.

\bibitem{Lud02b}
\textit{M.~Ludwig}, Projection bodies and valuations, Adv. Math. \textbf{172}
  (2002), no.~2, 158--168.

\bibitem{Lud03}
\textit{M.~Ludwig}, Ellipsoids and matrix-valued valuations, Duke Math. J.
  \textbf{119} (2003), no.~1, 159--188.

\bibitem{Lud05}
\textit{M.~Ludwig}, Minkowski valuations, Trans. Amer. Math. Soc. \textbf{357}
  (2005), no.~10, 4191--4213.

\bibitem{Lud06}
\textit{M.~Ludwig}, Intersection bodies and valuations, Amer. J. Math.
  \textbf{128} (2006), no.~6, 1409--1428.

\bibitem{Lud10a}
\textit{M.~Ludwig}, General affine surface areas, Adv. Math. \textbf{224}
  (2010), no.~6, 2346--2360.

\bibitem{Lud12}
\textit{M.~Ludwig}, {Valuations on Sobolev spaces}, Amer. J. Math. \textbf{134}
  (2012), 824--842.

\bibitem{LR10}
\textit{M.~Ludwig} and \textit{M.~Reitzner}, {A classification of SL$(n)$
  invariant valuations}, Ann. of Math. \textbf{172} (2010), no.~2, 1223--1271.

\bibitem{LR2017sl}
\textit{M.~Ludwig} and \textit{M.~Reitzner}, {SL$(n)$ invariant valuations on
  polytopes}, Discrete Comput. Geom. \textbf{57} (2017), no.~3, 571--581.

\bibitem{Lut93}
\textit{E.~Lutwak}, {The Brunn-Minkowski-Firey theory I: Mixed volumes and the
  Minkowski problem}, J. Differential Geom. \textbf{38} (1993), no.~1,
  131--150.

\bibitem{LYZ00a}
\textit{E.~Lutwak}, \textit{D.~Yang} and \textit{G.~Zhang}, {$L_p$ affine
  isoperimetric inequalities}, J. Differential Geom. \textbf{56} (2000), no.~1,
  111--132.

\bibitem{LYZ02a}
\textit{E.~Lutwak}, \textit{D.~Yang} and \textit{G.~Zhang}, {Sharp affine $L_p$
  Sobolev inequalities}, J. Differential Geom. \textbf{62} (2002), no.~1,
  17--38.

\bibitem{LYZ10b}
\textit{E.~Lutwak}, \textit{D.~Yang} and \textit{G.~Zhang}, Orlicz centroid
  bodies, J. Differential Geom. \textbf{84} (2010), no.~2, 365--387.

\bibitem{LYZ10a}
\textit{E.~Lutwak}, \textit{D.~Yang} and \textit{G.~Zhang}, Orlicz projection
  bodies, Adv. Math. \textbf{223} (2010), no.~1, 220--242.

\bibitem{MS11}
\textit{V.~D. Milman} and \textit{R.~Schneider}, Characterizing the mixed
  volume, Adv. Geom. \textbf{11} (2011), no.~4, 669--689.

\bibitem{Mun2000topo}
\textit{J.~R. Munkres}, Topology, Prentice Hall, 2000.

\bibitem{Par14a}
\textit{L.~Parapatits}, {SL(n)-contravariant $L_p$-Minkowski valuations},
  Trans. Amer. Math. Soc. \textbf{366} (2014), no.~3, 1195--1211.

\bibitem{Par14b}
\textit{L.~Parapatits}, {SL(n)-covariant $L_p$-Minkowski valuations}, J. London
  Math. Soc. \textbf{89} (2014), no.~2, 397--414.

\bibitem{Pet71}
\textit{C.~M. Petty}, Isoperimetric problems, in: Proceedings of the Conference
  on Convexity and Combinatorial Geometry, Department of Mathematics,
  University of Oklahoma, 1971, 26--41.

\bibitem{Rub1998inv}
\textit{B.~Rubin}, {Inversion of fractional integrals related to the spherical
  Radon transform}, J. Funct. Anal. \textbf{157} (1998), no.~2, 470--487.

\bibitem{Sch14}
\textit{R.~Schneider}, {Convex Bodies: The Brunn-Minkowski Theory}, Cambridge
  University Press, Cambridge, 2014, 2nd edition.

\bibitem{SS06}
\textit{R.~Schneider} and \textit{F.~E. Schuster}, {Rotation equivariant
  Minkowski valuations}, Int. Math. Res. Not. \textbf{Art. ID 72894} (2006),
  1--20.

\bibitem{Sch2010}
\textit{F.~E. Schuster}, {Crofton measures and Minkowski valuations}, Duke
  Math. J. \textbf{154} (2010), no.~1, 1--30.

\bibitem{SW2015mink}
\textit{F.~E. Schuster} and \textit{T.~Wannerer}, Minkowski valuations and
  generalized valuations, J. Eur. Math. Soc., in press.

\bibitem{SW12}
\textit{F.~E. Schuster} and \textit{T.~Wannerer}, {$GL(n)$ contravariant
  Minkowski valuations}, Trans. Amer. Math. Soc. \textbf{364} (2012), no.~2,
  815--826.

\bibitem{Tsa12}
\textit{A.~Tsang}, {Minkowski valuations on $L^p$-spaces}, Trans. Amer. Math.
  Soc. \textbf{364} (2012), no.~12, 6159--6186.

\bibitem{Wang13PS}
\textit{T.~Wang}, {The affine P{\'o}lya--Szeg{\"o} principle: equality cases
  and stability}, J. Funct. Anal. \textbf{265} (2013), no.~8, 1728--1748.

\bibitem{Wan11}
\textit{T.~Wannerer}, {GL(n) equivariant Minkowski valuations}, Indiana Univ.
  Math. J. \textbf{60} (2011), no.~5, 1655--1672.

\bibitem{WXL2017+}
\textit{Y.~Wu}, \textit{D.~Xi} and \textit{G.~Leng}, {On the discrete Orlicz
  Minkowski problem}, Trans. Amer. Math. Soc., in press.

\bibitem{XJL14}
\textit{D.~Xi}, \textit{H.~Jin} and \textit{G.~Leng}, {The Orlicz
  Brunn-Minkowski inequality}, Adv. Math. \textbf{260} (2014), 350--374.

\bibitem{Zhang91pro}
\textit{G.~Zhang}, Restricted chord projection and affine inequalities, Geom.
  Dedicata \textbf{39} (1991), no.~2, 213--222.

\bibitem{Zha99Sob}
\textit{G.~Zhang}, {The affine Sobolev inequality}, J. Differential Geom.
  \textbf{53} (1999), 183--202.

\bibitem{ZHY017orl}
\textit{B.~Zhu}, \textit{H.~Hong} and \textit{D.~Ye}, {The Orlicz-Petty
  bodies}, Int. Math. Res. Not. (2017), rnx008, 1-48.

\bibitem{Zhu2014log}
\textit{G.~Zhu}, {The logarithmic Minkowski problem for polytopes}, Adv. Math.
  \textbf{262} (2014), 909--931.

\end{thebibliography}
\end{document}